\theoremstyle{plain}
\newtheorem{theorem}{Theorem}[section]
\newtheorem{corollary}[theorem]{Corollary}
\newtheorem{proposition}[theorem]{Proposition}
\newtheorem{lemma}[theorem]{Lemma}
\theoremstyle{definition}
\newtheorem{definition}[theorem]{Definition}
\newtheorem{example}[theorem]{Example}
\newtheorem{remark}[theorem]{Remark}
\numberwithin{equation}{section}
\numberwithin{table}{section}
\newcommand{\adj}{\mathop{\mathrm{adj}}}
\newcommand{\bc}{\mathbf{c}}
\newcommand{\bm}{\mathbf{m}}
\newcommand{\bp}{\mathcal{P}}
\newcommand{\bs}{\mathbf{s}}
\newcommand{\bu}{\mathbf{u}}
\newcommand{\bv}{\mathbf{v}}
\newcommand{\bw}{\mathbf{w}}
\newcommand{\bx}{\mathbf{x}}
\newcommand{\by}{\mathbf{y}}
\newcommand{\bone}{\mathbf{1}}
\newcommand{\Id}{\mathrm{Id}}
\newcommand{\meas}{\mathop{\mathrm{Meas}}\nolimits}
\newcommand{\moment}{\mathcal{M}}
\newcommand{\momentr}{\moment^{\rho}}
\newcommand{\momentrp}{\moment^{\rho,+}}
\newcommand{\sgn}{\mathop{\mathrm{sgn}}}
\newcommand{\std}{\,\mathrm{d}}
\newcommand{\C}{\mathbb{C}}
\newcommand{\F}{\mathbb{F}}
\newcommand{\R}{\mathbb{R}}
\newcommand{\Z}{\mathbb{Z}}
\newcommand{\nnZ}{\Z_+}
\newcommand{\cH}{\mathcal{H}}
\newcommand{\cL}{\mathcal{L}}
\newcommand{\cX}{\mathcal{X}}
\newcommand{\cY}{\mathcal{Y}}
\newcommand{\cZ}{\mathcal{Z}}
\newcommand{\HTN}{\cH^{++}}
\begin{document}
\title{Moment-sequence transforms}

\dedicatory{To Gadadhar Misra, master of operator theory}

\author{Alexander Belton}
\address[A.~Belton]{Department of Mathematics and Statistics, Lancaster
University, Lancaster, UK}
\email{\tt a.belton@lancaster.ac.uk}

\author{Dominique Guillot}
\address[D.~Guillot]{University of Delaware, Newark, DE, USA}
\email{\tt dguillot@udel.edu}

\author{Apoorva Khare}
\address[A.~Khare]{Department of Mathematics, Indian Institute of
Science, Bangalore, India and Analysis \& Probability Research Group,
Bangalore, India}
\email{\tt khare@iisc.ac.in}

\author{Mihai Putinar}
\address[M.~Putinar]{University of California at Santa Barbara, CA,
USA and Newcastle University, Newcastle upon Tyne, UK} 
\email{\tt mputinar@math.ucsb.edu, mihai.putinar@ncl.ac.uk}

\date{8th September 2021}

\keywords{Hankel matrix,
moment problem,
positive definite matrix,
totally non-negative matrix,
entrywise function,
absolutely monotonic function,
Laplace transform,
positive polynomial,
facewise absolutely monotonic function}

\subjclass[2010]{15B48 (primary);
30E05, 44A60, 26C05 (secondary)}

\begin{abstract}
We classify all functions which, when applied term by term, leave
invariant the sequences of moments of positive measures on the real line.
Rather unexpectedly, these functions are built of absolutely monotonic
components, or reflections of them, with possible discontinuities at
the endpoints. Even more surprising is the fact that functions preserving
moments of three point masses must preserve moments of all measures. Our
proofs exploit the semidefiniteness of the associated Hankel matrices and
the complete monotonicity of the Laplace transforms of the underlying
measures. As a byproduct, we characterize the entrywise
transforms which preserve totally non-negative Hankel matrices,
and those which preserve all totally non-negative
matrices. The latter class is surprisingly rigid: such maps must
be constant or linear. We also examine transforms in the
multivariable setting, which reveals a new class of piecewise
absolutely monotonic functions.
\end{abstract}
\maketitle

\settocdepth{section}
\tableofcontents

\section{Introduction}

The ubiquitous encoding of functions or measures into discrete entities,
such as sampling data, Fourier coefficients, Taylor coefficients,
moments, and Schur parameters, leads naturally to operating directly on
the latter `spectra' rather than the original. The present article
focuses on operations which leave invariant power moments of positive
multivariable measures. To put our essay in historical perspective, we
recall a few similar and inspiring instances.

The characterization of positivity preserving analytic operations on the
spectrum of a self-adjoint matrix is due to Loewner in his
groundbreaking article~\cite{Loewner34}. Motivated by the then-novel
theory of the Gelfand transform and the Wiener--Levy theorem, in the
1950s Helson, Kahane, Katznelson, and Rudin identified all real functions
which preserve Fourier transforms of integrable functions or measures on
abelian groups~\cite{HKKR,Kahane-Rudin,Rudin59}. Roughly speaking, these
Fourier-transform preservers have to be analytic, or even absolutely
monotonic. The absolute-monotonicity conclusion was not new, and
resonated with earlier work of Bochner~\cite{Bochner-pd} and
Schoenberg~\cite{Schoenberg42} on positive definite functions on
homogeneous spaces. Later on, this line of thought was continued by Horn
in his doctoral dissertation~\cite{horn}. These works all address the
question of characterizing real functions $F$ which have the property
that the matrix $(F(a_{ij}))$ is positive semidefinite whenever
$(a_{ij})$ is, possibly with some structure imposed on these matrices.
Schoenberg's and Horn's theorems deal with all matrices, infinite and
finite, respectively, while Rudin et al.~deal with Toeplitz-type matrices
via results of Herglotz and Carath\'eodory.

In this article, we focus on functions which preserve moment sequences of
positive measures on Euclidean space, or, equivalently, in the
one-variable case, functions which leave invariant positive semidefinite
Hankel kernels. As we show, these moment preservers are quite rigid, with
analyticity and absolute monotonicity again being present in a variety of
combinations, especially when dealing with multivariable moments. We
state in detail in Section~\ref{Sprelim} our results for one-variable
functions and domains and for moment sequences of measures on them, but
first we present in Section~\ref{Sconclude} tabulated lists of our
results in one and several variables.

The first significant contribution below is the relaxation to a minimal
set of conditions, which are very accessible numerically, that
characterize the positive definite Hankel kernel transformers in one
variable. Specifically, Schoenberg proved that a continuous map
$F : (-1,1) \to \R$ preserves positive semidefiniteness when applied
to matrices of all dimensions, if and only if $F$ is analytic and has
positive Taylor coefficients~\cite{Schoenberg42}. Later on, Rudin was
able to remove the continuity assumption~\cite{Rudin59}. In our first
major result, we prove that a map $F : (-1,1) \to \R$ preserves positive
semidefiniteness of all matrices if and only if it preserves this on
Hankel matrices. Even more surprisingly, a refined analysis reveals
that preserving positivity on Hankel matrices of rank at most~$3$ already
implies the same conclusion.

Our result can equivalently be stated in terms of preservers of moment
sequences of positive measures. Thus we also characterize such preservers
under various constraints on the support of the measures. Furthermore, we
examine the analogous problem in higher dimensions. In this situation,
extra work is required to compensate for the failure of Hamburger's
theorem in higher-dimensional Euclidean spaces.

Our techniques extend naturally to totally non-negative matrices,
in parallel to their natural connection to the Stieltjes moment
problem. We prove that the entrywise transformations which preserve
total non-negativity for all rectangular matrices, or all symmetric
matrices, are either constant or linear. Furthermore, we show that the
entrywise preservers of totally non-negative Hankel matrices must be
absolutely monotonic on the positive semi-axis. The class of totally
non-negative matrices was isolated by M.~Krein almost a century ago; he
and his collaborators proved its significance for the study of
oscillatory properties of small harmonic vibrations in linear elastic
media~\cite{GK, GK1}. Meanwhile this chapter of matrix analysis has
reached maturity and it continues to be explored and enriched on
intrinsic, purely algebraic grounds~\cite{FJ,FJS}.

We conclude by classifying transformers of tuples of moment sequences,
from which a new concept emerges, that of a piecewise absolutely
monotonic function of several variables. In particular, our results
extend original theorems by Schoenberg and Rudin. For more on the
wider framework within which this article sits, we refer the reader to
the survey~\cite{BGKP-survey}.

Besides the classical works cited above delineating this area of
research, we rely in the sequel on Bernstein's theory of absolutely
monotone functions~\cite{Bernstein,Widder}, a related pioneering article
by Lorch and Newman~\cite{Lorch-Newman} and Carlson's interpolation
theorem for entire functions~\cite{Carlson}.

The study of positive definite functionals defined on $*$-semigroups,
with or without unit, led Stochel to a series of groundbreaking
discoveries, complementing the celebrated Naimark and Sz.~Nagy
dilation theorems and, in particular, putting multivariate moment
problems in a wider, more flexible framework
\cite{Stochel85,Stochel91,Stochel92}. A byproduct of his studies is a
classification of positive definite functionals on the multiplicative
semigroup $( -1, 1 )$ \cite{Stochel91}, culminating with a similar
conclusion to our main one-dimensional result: these positive
functionals are absolutely monotonic on $( 0, 1 )$ with possibly
discontinuous derivatives, of any order, at the origin.

As a final remark, we note that entrywise transforms of moment sequences
were previously studied in a particular setting motivated by infinite
divisibility in probability theory~\cite{Horn-toeplitz, Stochel}. The
study of entrywise operations which leave invariant the cone of all
positive matrices has also recently received renewed attention in the
statistics literature, in connection to the analysis of big data. In that
setting, functions are applied entrywise to correlation matrices to
improve properties such as their conditioning, or to induce a Markov
random-field structure. The interested reader is referred
to~\cite{BGKP-fixeddim, GKR-lowrank, Guillot_Rajaratnam2012b} and the
references therein for more details.

A companion to the present article \cite{BGKP-TN} was recently
completed, which extends the work here with definitive classifications
of preservers of totally positive and totally non-negative kernels,
and together with kernels having additional structure, such as those
of Hankel \cite{Widder34} or Toeplitz \cite{Schoenberg51} type, or
generating series, such as P\'olya frequency functions and sequences.

\subsection{Summary of main results}\label{Sconclude}

Tables~\ref{T1var} and~\ref{Tmultivar} below summarize the results
proved in this article. The notation used below is explained in
the main body of the article; see also the List of Symbols following this
subsection.

In the one-variable setting, we have identified the positivity
preservers acting on (i)~all matrices, and (ii)~all Hankel matrices, in
the course of classifying such functions acting on (iii)~moment
sequences, i.e., all Hankel matrices arising from moment sequences of
measures supported on~$[-1,1]$.  Characterizations for all three classes
of matrices are obtained with the additional constraint that
the entries of the matrices lie in $(0,\rho)$, $(-\rho,\rho)$, and
$[0,\rho)$, where $\rho \in (0,\infty]$.

{\renewcommand{\arraystretch}{1.3}
\begin{table}[h]
\begin{tabular}{|c|c|c|c|}
\hline
& \multicolumn{3}{c|}{$F[-]$ preserves positivity on:}\\ \hline
Domain $I$, & $\cup_{N \geq 1} \bp_N(I)$ & $\cH^+(I)$ & $\mu \in
\moment([0,1])$ or $\moment([-1,1])$,\\
$\rho \in (0,\infty]$ & & & $s_0(\mu) \in I \cap [0,\infty)$\\
\hline \hline
$(0,\rho)$ & Theorems~\ref{Thorn}, \ref{Treformulation}
& Theorems~\ref{Thorn-hankel}, \ref{Treformulation} &
Theorems~\ref{Thorn-hankel}, \ref{Treformulation}\\ \hline
$[0,\rho)$ & Proposition~\ref{P1sided} & Proposition~\ref{P1sided} &
Theorem~\ref{T1sided-general}\\ \hline
$(-\rho,\rho)$ & Theorem~\ref{TSchoenberg} &
Theorem~\ref{T2sided-general} &
Theorem~\ref{T2sided-general}\\ \hline
\end{tabular}\vspace*{2mm}
\caption{The one-variable case.}\label{T1var}
\end{table}}

We then extend each of the results in Table~\ref{T1var} to apply to
functions acting on tuples of positive matrices or moment sequences:
see Table~\ref{Tmultivar}.

{\renewcommand{\arraystretch}{1.3}
\begin{table}[h]
\begin{tabular}{|c|c|c|c|}
\hline
& \multicolumn{3}{c|}{$F[-]$ preserves positivity on $m$-tuples of
elements in:}\\ \hline
Domain $I$, & $\cup_{N \geq 1} \bp_N(I)$ & $\cH^+(I)$ & $\mu \in
\moment([0,1])$ or $\moment([-1,1])$,\\
$\rho \in (0,\infty]$ & & & $s_0(\mu) \in I \cap [0,\infty)$\\
\hline \hline
$(0,\rho)$ & Theorem~\ref{Thorn-hankel2} & Theorem~\ref{Thorn-hankel2}  &
Theorem~\ref{Thorn-hankel2}\\ \hline
$[0,\rho)$ & Proposition~\ref{P1sided-multi} &
Proposition~\ref{P1sided-multi} & Theorem~\ref{T1sided-multi}\\ \hline
$(-\rho,\rho)$ & Theorem~\ref{T2sided-multi} &
Theorem~\ref{T2sided-multi} & Theorem~\ref{T2sided-multi}\\
& (see \cite{fitzgerald} for $\rho = \infty$)  & &\\ \hline
\end{tabular}\vspace*{2mm}
\caption{The multivariable case.}\label{Tmultivar}
\end{table}}

In the one-variable setting, we do more than is recorded in
Table~\ref{T1var}, since our results cover various classes of totally
non-negative matrices (Section~\ref{TN}), as well as the
closed-interval settings of $[0,\rho]$ and $[-\rho,\rho]$ for
$\rho < \infty$ (Section~\ref{Sdomain}). The multivariable case may
contain products of open and closed intervals, but it would be rather
cumbersome, and somewhat artificial, to consider them all. We do not
pursue this direction in the present work.

In all of the above contexts, with the exception of functions
on~$[0,\rho)^m$ (i.e., the $(2,3)$ entry in both tables), the
characterizations are uniform: all such positivity preservers are
necessarily analytic on the domain and absolutely monotonic on the
closed positive orthant. The converse result holds trivially by the
Schur product theorem. The one exceptional case reveals a richer
family of `facewise absolutely monotonic maps'; see
Section~\ref{SfaceAM}.

We have also improved on all of the above results, by significantly
relaxing the hypotheses required to obtain absolute monotonicity.

Finally, and for completeness, we remark that
Theorem~\ref{Tthreshold2} from our previous work \cite{BGKP-fixeddim},
which is widely used herein, admits a generalization to all, possibly
non-consecutive, integer powers, and again the bounds have closed
form. This result is obtained through a careful analysis and novel
results about Schur polynomials; we refer the reader to the recent
paper by Khare and Tao \cite{KT} for more details.

\subsection{List of symbols}

For the convenience of the reader, we list some of the symbols used in
this paper.

\begin{itemize}
\item Given a subset $I \subset \R$, $\bp_N^k( I )$ is the set of
positive semidefinite $N \times N$ matrices with entries in $I$ and of
rank at most $k$. We let $\bp_N( I ) := \bp_N^N( I )$
and $\bp_N := \bp_N( \R )$.

\item $\cH^+(I)$ denotes the set of positive semidefinite Hankel
matrices of arbitrary dimension with entries in $I$.

\item $\HTN_n$ denotes the set of $n \times n$ totally
non-negative Hankel matrices, and $\HTN$ denotes the set of all
totally non-negative Hankel matrices.

\item $H^{(1)}$ denotes the truncation of a possibly semi-infinite matrix
$H$ obtained by excising the first column.

\item $F[ H ]$ is the result of applying $F$ to each entry of the
matrix~$H$.

\item For $K \subset \R$, we denote by $\meas^+(K)$ the set of
admissible measures, i.e., non-negative measures $\mu$ supported
on~$K$ and admitting moments of all orders.

\item The $k$th moment of a measure $\mu$ is denoted by~$s_k(\mu)$;
the corresponding moment sequence is
$\bs(\mu) := (s_k(\mu))_{k \geq 0}$. The associated Hankel moment
matrix $H_\mu$ has $(i,j)$ entry $s_{i+j}(\mu)$. In particular, the
moment sequence of~$\mu$ is the leading row and column of~$H_\mu$.

\item Given $K \subset \R$, $\moment(K)$ denotes the set of
moment sequences associated to elements of $\meas^+(K)$.
For any $k \geq 0$, $\moment_k(K)$ denotes the corresponding set of
truncated moment sequences:
$\moment_k(K) := \{ (s_0(\mu), \ldots, s_k(\mu)) : \mu \in \meas^+(K) \}$.

\item Given $K \subset \R$ and a scalar $\rho$ with
$0 < \rho \leq \infty$, $\momentr(K)$ denotes the subset of
$\moment(K)$ with moments $s_j \in (-\rho, \rho)$ for all $j \geq 0$,
and, for any  $k \geq 0$, we let $\momentr_k(K)$ denote the subset of
$\moment_k(K)$ with $s_j \in (-\rho, \rho)$ for $j = 0, \ldots, k$.

\item Given $\rho$ with $0 < \rho \leq \infty$, an
integer $k \geq 0$, and $x \in [-1,1)$, we let
$\momentrp(\{ 1, x \})$ and $\momentrp_k(\{ 1, x \})$ denote the
subsets of $\moment(\{ 1, x \})$ and $\moment_k(\{ 1, x \})$,
respectively, with total mass $s_0 < \rho$ and such that $1$ and $x$ both
have positive mass.

\item Given an integer~$m\geq1$, a function $F : \R^m \to \R$ acts on
tuples of moment sequences of admissible measures in
$\moment(K_1) \times \cdots \times \moment(K_m)$ as follows:
\begin{equation}
F[\bs(\mu_1), \dots, \bs(\mu_m)] :=
(F(s_k(\mu_1), \dots, s_k(\mu_m)))_{k \geq 0}.
\end{equation}

\item Given $h>0$ and an integer $n \geq 0$, $\Delta^n_h F$ denotes the
$n$th forward difference of the function~$F$ with step size~$h$.

\item $\bone_{m \times n}$ denotes the $m \times n$ matrix
with all entries equal to $1$.

\item $\C^+ := \{ z \in \C: \Re z > 0 \}$ denotes the right open
half-plane.
\end{itemize}

\subsection{Organization}

The plan of the article is as follows. Section~\ref{Sprelim} recalls
notation and reviews previous work, while Section~\ref{S1dmain} lists
our main results for classical positive Hankel matrices transformers,
which, in particular, go beyond previous classical results.
Sections~\ref{Smoments01}, \ref{Smomentsm11}, \ref{Smomentsm10},
and~\ref{Sdomain} are devoted to proofs, arranged by the domains of the
entries of the relevant Hankel matrices. For these proofs, we work with
measures with restricted total mass, which is reflected in the domains of
the test sets of matrices, and helps unify previously known results.
Thus, we end up showing stronger results than in Section~\ref{Sprelim};
these results were tabulated in a concise form in Section~\ref{Sconclude}
above. An additional strengthening involves severely reducing the
supports of the test measures, which translates to rank constraints on
the test sets of Hankel matrices and hence stronger results. This
technical point is not mentioned in the above tables, but is detailed in
the aforementioned
Sections~\ref{Smoments01},~\ref{Smomentsm11},~\ref{Smomentsm10},
and~\ref{Sdomain} devoted to proofs.

Section~\ref{TN} contains the classifications of preservers of
total non-negativity for several different sets of matrices, in the
dimension-free setting.
Section~\ref{Smulti} deals with multivariable transformers of Hankel
kernels. Section~\ref{SLaplace} makes the natural link with Laplace
transforms and interpolation of entire functions.
The appendix is devoted to algebraic properties of adjugate matrices.

\subsection{Acknowledgements}

The authors extend their thanks to the International Centre for
Mathematical Sciences, Edinburgh, where the major part of this work
was carried out.
D.G.~is partially supported by a University of
Delaware Research Foundation grant, by a Simons Foundation
collaboration grant for mathematicians, and by a University of
Delaware Research Foundation Strategic Initiative grant.
A.K.~is partially supported by
Ramanujan Fellowship SB/S2/RJN-121/2017,
MATRICS grant MTR/2017/000295, and
SwarnaJayanti Fellowship grants SB/SJF/2019-20/14 and DST/SJF/MS/2019/3
from SERB and DST (Govt.~of India),
by grant F.510/25/CAS-II/2018(SAP-I) from UGC (Govt.~of India),
by a Tata Trusts gravel grant,
and by a Young Investigator Award from the Infosys Foundation.
We are grateful to the referees for valuable comments and enriching
bibliographical indications.

\section{Preliminaries}\label{Sprelim}

We collect in this section the basic concepts and notation necessary
for accessing the rest of the article. Bibliographical indications
will rely on classical texts. We are fortunate to be able to refer to
a few very recent outstanding monographs, including
\cite{Schmuedgen,Simon}.

\subsection{Matrices of moments}

Our raw material consists of structured matrices of moments and functions
acting on them. In this subsection, we concentrate on the first.
Henceforth $N$ is a positive integer.

\begin{definition}
Given a subset $I \subset \R$, denote by $\bp_N(I)$ the set of
positive semidefinite $N \times N$ matrices with entries in~$I$, and
let $\bp_N := \bp_N( \R )$.
\end{definition}

The set $\bp_N$ is a convex cone, closed in the Euclidean topology of
$\R^{N \times N}$. Schur's product theorem asserts $A \circ B \in \bp_N$
whenever $A, B \in \bp_N$; here $A \circ B = (a_{ij} b_{ij})$ denotes the
entrywise product of two equidimensional matrices $A = (a_{ij})$ and $B =
(b_{ij})$. For a proof it is sufficient to decompose $B$ into a sum of
rank-one positive matrices and follow the definition of matrix
positivity.

Recall that a matrix is said to be \emph{totally non-negative} if all its
minors are non-negative. Totally non-negative matrices occur in a variety
of areas; see~\cite{FJ} and the references therein. For instance, a
well-known observation due to Schoenberg asserts that given vectors
$x_1,x_2, \ldots, x_N$, in an inner-product space, the corresponding
matrix $( \exp(- \| x_j - x_k \|^2)_{j,k=1}^N$ is totally non-negative.

\begin{definition}
For an integer $n \geq 1$, let $\HTN_n$ denote the set of $n \times n$
totally non-negative Hankel matrices, and let $\HTN := \bigcup_{n \geq 1}
\HTN_n$ denote the set of totally non-negative Hankel matrices of
arbitrary size.
\end{definition}

The moment problem, in the widely accepted meaning of the term, is
arguably the quintessential inverse problem. It has a long history and
continues to lead to unexpected impacts in pure and applied
mathematics; see, for
instance,~\cite{Akhiezer,Lasserre,Schmuedgen,STmoment}. Moments of
positive measures are in general observables, with a physical or
probabilistic interpretation. These observed real numbers are not
free, but are subject to an array of semi-algebraic constraints, which
are generally hard to deal with directly. A convenient and numerically
friendly approach is to organize the moments into matrices with
red[undant entries, the simplest case being associated to measures
supported on subsets of the real line. We will start with this generic
situation.

Let $\mu$ be a non-negative measure on $\R$, rapidly decreasing at
infinity, that admits moments of all orders; let its moment data and
associated Hankel matrix be denoted as follows:
\begin{equation}\label{Ehankel}
s_k(\mu) = s_k := \int_\R x^k \std \mu, \qquad
\bs(\mu) := (s_k(\mu))_{k \geq 0}, \qquad
H_\mu := \begin{pmatrix} s_0 & s_1 & s_2 & \cdots\\
s_1 & s_2 & s_3 & \cdots\\
s_2 & s_3 & s_4 & \cdots\\
\vdots & \vdots & \vdots & \ddots
\end{pmatrix}.
\end{equation}
All measures appearing in this paper are taken to be non-negative and
are assumed to possess moments of all orders. We will henceforth call
such measures \emph{admissible}.

Throughout this paper, we allow matrices to be semi-infinite in both
coordinates. We also identify without further comment the space of
real sequences $(s_0, s_1, \dots)$ and the corresponding Hankel
matrices, as done in~(\ref{Ehankel}).

To verify the positivity of the matrix $H_\mu$, it is sufficient to
observe that
\[
0 \leq \int |\sum_{j=0}^N c_j x^j|^2 \std \mu = \sum_{j,k=0}^N H_\mu(j,k)
c_j c_k.
\]

\begin{definition}
Given subsets $I$, $K \subset \R$, let~$\meas^+(K)$ denote the
admissible measures supported on~$K$, and let~$\cH^+(I)$ denote
the set of complex Hermitian positive semidefinite Hankel
matrices with entries in~$I$. We will henceforth use the adjective
`positive' to mean `complex Hermitian positive semidefinite' when
applied to matrices.
\end{definition}

The following theorem combines classical results of Hamburger,
Stieltjes, and Hausdorff. 

\begin{theorem}\label{Thamburger}
A sequence $\bs = (s_k)_{k=0}^\infty$ is a moment sequence for an
admissible measure on~$\R$ if and only if the Hankel matrix with first
column $\bs$ is positive. In other words, the map
$\Psi : \mu \mapsto (s_k(\mu))_{k=0}^\infty$ is a surjection from
$\meas^+(\R)$ onto $\cH^+(\R)$. Moreover,
\begin{enumerate}
\item restricted to $\meas^+([0,\infty))$, the map $\Psi$ is a
surjection onto the positive Hankel matrices with non-negative
entries, such that removing the first column still yields a positive
matrix;

\item restricted to $\meas^+([-1,1])$, the map $\Psi$ is a bijection
onto the positive Hankel matrices with uniformly bounded entries;

\item restricted to $\meas^+([0,1])$, the map $\Psi$ is a bijection
onto the positive Hankel matrices with uniformly bounded entries, such
that removing the first column still yields a positive matrix.
\end{enumerate}
\end{theorem}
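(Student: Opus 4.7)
The plan is to split the four claims into two parts: the forward implications (passing from a measure to its Hankel data), which are elementary computations, and the converse surjectivity statements, which invoke the classical machinery of the Hamburger, Stieltjes, and Hausdorff moment problems.

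First, I would dispatch the forward direction. For an admissible measure $\mu$ supported on $K$ and any polynomial $p(x) = \sum_k c_k x^k$,
\[
0 \leq \int_\R |p(x)|^2 \std\mu = \sum_{i, j \geq 0} \overline{c_i} c_j s_{i+j}(\mu),
\]
which is precisely the positivity of $H_\mu$. If $K \subset [0,\infty)$, the same identity with $x|p(x)|^2$ in place of $|p(x)|^2$ shows that the shifted Hankel matrix is positive and that all entries are non-negative. If $K \subset [-1,1]$, the bound $|s_k(\mu)| \leq \mu(K)$ gives uniform boundedness of the entries.

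For the converse, I would introduce the Riesz functional $L : \R[x] \to \R$ defined by $L(x^k) := s_k$, so that positivity of the Hankel matrix is equivalent to $L(p^2) \geq 0$ for every real polynomial $p$. In the Hamburger case, the representing measure is then produced either spectrally (form a symmetric Jacobi operator on $\ell^2$ via Gram--Schmidt orthogonalisation of $\{1, x, x^2, \dots\}$ with respect to the bilinear form $(p,q) \mapsto L(pq)$, take a self-adjoint extension, and let $\mu$ be the spectral measure at the cyclic vector $e_0$), or analytically, via the Riesz--Haviland theorem combined with Luk\'acs' description of polynomials non-negative on $\R$ as sums of two squares. For statement~(1), the extra condition that removing the first column of $H_{\bs}$ leaves a positive matrix translates to $L(xp^2) \geq 0$, and together with $L(p^2) \geq 0$ and Luk\'acs' half-line theorem (every polynomial non-negative on $[0,\infty)$ has the form $p^2 + xq^2$) this forces $\mathrm{supp}\,\mu \subset [0,\infty)$.

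Finally, for statements~(2) and~(3), uniform boundedness of the moments forces the representing measure to have compact support, since a charge on $\{|x| > 1 + \varepsilon\}$ would contribute at least $(1+\varepsilon)^{2k}$ to $s_{2k}$; combining this with the Stieltjes argument yields support in $[0,1]$ for case~(3). Bijectivity of $\Psi$ in these two cases then follows from uniqueness of the representing measure, which is an immediate consequence of the Weierstrass approximation theorem on compact intervals. The principal obstacle in the whole argument is the functional-analytic extension step in the Hamburger case (closing the Jacobi operator, or invoking Hahn--Banach through a Riesz-type representation); however, since this is a century-old classical result I expect the authors to simply cite Akhiezer's monograph rather than reproduce the proof.
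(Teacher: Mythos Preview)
Your anticipation is exactly right: the paper does not prove this theorem at all, but simply records it as a combination of the classical results of Hamburger, Stieltjes, and Hausdorff, with a reference to Akhiezer's monograph. Your sketch of the forward direction and of the converse via the Riesz functional, Jacobi-operator/spectral or Riesz--Haviland arguments, together with the Luk\'acs decompositions for the half-line and the Weierstrass-based uniqueness on compact intervals, is a correct outline of the standard proof, and goes well beyond what the paper provides.
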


\begin{proof}
The first assertion is classical; for example, see Akhiezer's book
\cite[Theorems 2.1.1, 2.6.4, and~2.6.5]{Akhiezer}. For the last two
statements, we simply remark that for an admissible measure $\mu$,
\[
s_{2 n}( \mu ) = \int_{[-1,1]} x^{2 n} \std\mu + %
\int_{\R \setminus [-1,1]} x^{2 n} \std\mu.
\]
The first integral remains uniformly bounded as a function of $n$, while
the second tends to infinity with $n$ whenever the measure $\mu$ has
positive mass on $\R \setminus [-1,1].$
\end{proof}

\begin{definition}\label{Dtruncate}
In view of the above correspondence, we denote by $\moment(K)$ the set
of moment sequences associated to measures in $\meas^+(K)$.
Equivalently, $\moment(K)$ is the collection of first columns of
Hankel matrices associated to admissible measures supported on~$K$.
We write $H^{(1)}$ to denote the truncation of a matrix $H$ in which
the first column is excised.
\end{definition}

For technical reasons which will become apparent from the proofs
below, we introduce an additional parameter via the following
definition.

\begin{definition}\label{Dmass}
Given $0 < \rho \leq \infty$ and $I \subset \R$, let $\momentr(I)$ denote
the set of moment sequences $(s_k(\mu))_{k=0}^\infty$ of admissible
measures $\mu$ supported on $I$, with all moments in
$(-\rho, \rho)$. Also, for any $n \geq 0$, let $\momentr_n(I)$ denote the
corresponding set of \emph{truncated moment sequences}
$(s_k(\mu))_{k=0}^n$.
\end{definition}

Note that $\momentr(I) = \moment(I)$ and $\momentr_n(I) = \moment_n(I)$
when $\rho = \infty$. Moreover, for a non-negative measure $\mu$
supported on $[-1,1]$, the mass $s_0(\mu)$ dominates $|s_k(\mu)|$ for all
$k \geq 0$. Studying moment sequences of admissible measures having mass
$s_0 < \rho$ is therefore equivalent to working with Hankel matrices with
entries in a bounded interval $(-\rho, \rho)$. This will be our approach
in the remainder of the paper.

A simple characterization of rank-one Hankel matrices is stated below.

\begin{lemma}\label{Lrank1Hankel}
A rank-one $N \times N$ matrix $\bu \bu^T$, with entries in any field,
is Hankel if and only if either the successive entries of~$\bu$ are in
a geometric progression, or all entries but the last are~$0$. More
precisely, the matrix $\bu \bu^T$ is Hankel if and only if
\begin{equation}\label{Erank1Hankel}
u_j = \begin{cases}
u_1 (u_2/u_1)^{j-1} \qquad & \text{if } u_1 \neq 0,\\
0 & \text{if } u_1 = 0 \text{ and } 1 \leq j < N.
\end{cases}
\end{equation}
\end{lemma}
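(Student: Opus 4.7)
My plan is to work directly from the defining identity: the $(i,j)$-entry of $\bu \bu^T$ is $u_i u_j$, and the matrix is Hankel exactly when $u_i u_j$ depends only on the sum $i + j$. Starting from this observation, the lemma will fall out after splitting into the two natural directions.

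For the ``if'' direction I would simply plug in. In the geometric case $u_j = u_1 r^{j-1}$ with $r = u_2 / u_1$, the product $u_i u_j = u_1^2 r^{i + j - 2}$ clearly depends only on $i + j$. In the degenerate case $u_1 = \cdots = u_{N-1} = 0$, every product $u_i u_j$ vanishes except at $(N,N)$, and since $2N$ is the unique index-sum achieved by an entry of the matrix there, the Hankel condition is satisfied vacuously.

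For the ``only if'' direction I would assume the Hankel property and split on whether $u_1$ vanishes. When $u_1 \neq 0$, equating the entries at $(1,j)$ and $(2,j-1)$ for $2 \leq j \leq N$ gives $u_1 u_j = u_2 u_{j-1}$, hence the one-step recursion $u_j = (u_2/u_1)\, u_{j-1}$, which yields the geometric formula by induction on $j$. When $u_1 = 0$, I would let $j_0$ be the smallest index with $u_{j_0} \neq 0$ (if such an index exists); necessarily $j_0 \geq 2$. Supposing $j_0 \leq N - 1$ for contradiction, both $(j_0, j_0)$ and $(j_0 - 1, j_0 + 1)$ lie in $\{1, \dots, N\}^2$ and share index-sum $2 j_0$, so $u_{j_0}^2 = u_{j_0 - 1} u_{j_0 + 1} = 0$ by minimality of $j_0$, contradicting $u_{j_0} \neq 0$. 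This forces $j_0 = N$ (or $\bu = 0$), as required.

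No step here is conceptually difficult; the only care needed is in the indexing of the degenerate case, particularly in ensuring that $(j_0 - 1, j_0 + 1)$ is an admissible matrix entry, which is exactly why the hypothesis $j_0 \leq N - 1$ is used. Beyond this small subtlety the argument is essentially mechanical, and the claim that it holds over an arbitrary field is immediate from the indexing-only nature of the proof.
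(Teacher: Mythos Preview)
Your proof is correct and follows essentially the same approach as the paper, namely comparing entries of $\bu\bu^T$ lying on a common anti-diagonal. Your choice of the relation $u_1 u_j = u_2 u_{j-1}$ (from positions $(1,j)$ and $(2,j-1)$) is in fact slightly tidier than the paper's use of $u_{j-1} u_{j+1} = u_j^2$, since it yields the one-step recursion $u_j = (u_2/u_1)\,u_{j-1}$ directly and handles the subcase $u_2 = 0$ without further work.
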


\begin{proof}
This is immediate for $N \geq 2$. For $N>2$, each principal $3 \times 3$
block submatrix of $\bu \bu^T$ with successive rows and columns is of the
form
\[
\begin{pmatrix} u_{j-1}^2 & u_{j-1} u_j & u_{j-1} u_{j+1}\\
u_j u_{j-1} & u_j^2 & u_j u_{j+1}\\
u_{j+1} u_{j-1} & u_{j+1} u_j & u_{j+1}^2 \end{pmatrix},
\]
whence $u_{j-1} u_{j+1} = u_j^2$ for all $j \geq 2$.
Identity~(\ref{Erank1Hankel}) follows immediately.
\end{proof}

We invite the reader to find all positive measures on the real line
which produce a rank-one Hankel matrix. In general, one can read off
from a positive Hankel matrix whether the representing measure is
unique, and estimate the shape of the support of the representing
measure(s) (of utmost importance in polynomial optimization), and
enter into the Lebesgue decomposition of the representing
measure(s). We refer to~\cite{Akhiezer,Lasserre,Schmuedgen} for
aspects of such refined analysis pertaining to the moment problem and
its current applications.

In Section~\ref{Smulti}, we will treat multivariable moment
problems. In that context, Hankel matrices are replaced by kernels
with a Hankel-type property. The semigroup approach proves to be
superior in the multivariablee setting; see~\cite{BCR} for more
details.

To conclude, we note that the study of Hankel matrices forms an
important chapter of modern analysis, with ramifications for
approximation theory, probability theory and control
theory~\cite{Peller}.

\subsection{Absolutely monotonic functions}

We turn now to operators on moments by identifying two relevant classes
of functions.

Central to our study is the class of \emph{absolutely monotonic
entire functions}. These are entire functions with non-negative Taylor
coefficients at every point of $( 0, \infty )$. Equivalently, it is
sufficient for such a function to have non-negative Taylor coefficients
at zero. Their structure was unveiled in a fundamental memoir by
Bernstein~\cite{Bernstein}; see also Widder's book~\cite{Widder} or the
recent treatise~\cite{SSV}.

One can restrict the absolute monotonicity definition to a finite
interval, with the following outcome.

\begin{theorem}[{\cite[Chapter~IV, Theorem~3a]{Widder}}]\label{Twidder}
If $f$ is absolutely monotonic on~$[a,b)$, then it can be extended
analytically to the complex disc centered at~$a$ and of
radius~$b-a$.
\end{theorem}

Recall that a function is said to be \emph{completely monotonic} on an
interval~$(a,b)$ if the map $x \mapsto f(-x)$ is absolutely monotonic
on~$(-b,-a)$, i.e., if $(-1)^k f^{(k)}(x) \geq 0$ for all
$x \in (a,b)$.  Similarly, a function is completely monotonic on an
interval $I \subset \R$ if it is continuous on~$I$ and is completely
monotonic on the interior of $I$.

Complete monotonicity can also be defined using finite
differences. Let $\Delta^n_h f$ denote the $n$th forward difference
of~$f$ with step size~$h$:
\[
\Delta^n_h f(x) := \sum_{k=0}^n (-1)^{n-k} \binom{n}{k} f(x+kh).
\]
Then $f$ is completely monotonic on $(a,b)$ if and only if
$(-1)^n \Delta^n_h f( x ) \geq 0$ for all non-negative integers~$n$
and for all $x$, $h$ such that $a < x < x+h < \dots < x + nh < b$.
See \cite[Chapter~IV]{Widder} for more details on completely monotonic
functions. Such functions were also characterized in a celebrated
result of Bernstein.

\begin{theorem}[{Bernstein
\cite[Chapter~IV, Theorem~12a]{Widder}}]\label{Tbernstein}
A function $f: [0,\infty) \to \R$ is completely monotonic on
$0 \leq x < \infty$ if and only if 
\[
f(x) = \int_0^\infty e^{-xt}\std\mu(t)
\]
for some finite positive measure $\mu$.
\end{theorem}

Atomic measures are not excluded in Bernstein's theorem, hence series
of exponentials and Dirichlet series are an integral part of the
theory of absolutely or completely monotonic functions. One of the
major advantages of absolute monotonicity is the analytic extension of
the respective function to a complex domain. We will exploit this
quality further on in the present work.

\subsection{Matrix positivity transforms}

The main theme of our work is permanence properties of moment matrices
$A$ under \emph{entrywise} operations. From the very beginning we warn
the reader that our framework is in contrast to the classical
functional calculus $A \mapsto f(A)$ which is the subject of Loewner's
celebrated theorem: \emph{a real function $f$ preserves matrix
ordering (i.e., $A \leq B$ implies $f(A) \leq f(B)$) among
self-adjoint matrices if and only if $f$ extends analytically to the
upper-half plane and it has positive imaginary part there.} For ample
details and a dozen different proofs, see~\cite{Donoghue,Simon}.

Entrywise operations on matrices and kernels also have a long and
interesting history, see~\cite{BGKP-survey}. We will provide the
outlines of a few significant results.

Transformations which leave invariant Fourier transforms of various
classes of measures on groups or homogeneous spaces were studied by
many authors, including Schoenberg~\cite{Schoenberg42},
Bochner~\cite{Bochner-pd}, Helson, Kahane, Katznelson, and
Rudin~\cite{HKKR,Kahane-Rudin}. From the latter works, Rudin
extracted~\cite{Rudin59} an analysis of maps which preserve moment
sequences for admissible measures on the torus; equivalently, these are
functions which, when applied entrywise, leave invariant the cone of
positive semidefinite Toeplitz matrices. Rudin's result, originally
proved by Schoenberg~\cite{Schoenberg42} under a continuity assumption,
is as follows.

\begin{theorem}[Schoenberg, Rudin]\label{TSchoenberg}
Given a function $F : (-1,1) \to \R$, the following are equivalent.
\begin{enumerate}
\item Applied entrywise, $F$ preserves positivity on the space of
positive matrices with entries in $(-1,1)$ of all dimensions.

\item Applied entrywise, $F$ preserves positivity on the space of
positive Toeplitz matrices with entries in $(-1,1)$ of all dimensions.

\item The function $F$ is real analytic on~$(-1,1)$ and absolutely
monotonic on~$(0,1)$.
\end{enumerate}
\end{theorem}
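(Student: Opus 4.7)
I would establish the cycle $(3)\Rightarrow(1)\Rightarrow(2)\Rightarrow(3)$.

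For $(3)\Rightarrow(1)$, real analyticity on $(-1,1)$ together with absolute monotonicity on $(0,1)$ forces a Taylor expansion $F(x) = \sum_{n \geq 0} c_n x^n$ valid on $(-1,1)$ with non-negative coefficients: indeed $c_n = F^{(n)}(0)/n! = \lim_{x \downarrow 0} F^{(n)}(x)/n! \geq 0$. The Schur product theorem then shows that every entrywise Hadamard power $A^{\circ n}$ of a positive semidefinite matrix $A$ is positive semidefinite, and the identity
\[
F[A] := \bigl( F(a_{ij}) \bigr)_{i,j} = \sum_{n \geq 0} c_n A^{\circ n}
\]
exhibits $F[A]$ as an entrywise-convergent series of positive semidefinite matrices with non-negative weights. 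Since positive semidefiniteness of each fixed finite principal submatrix is preserved under such entrywise limits, $F[A]$ is positive semidefinite. The implication $(1)\Rightarrow(2)$ is immediate, since positive Toeplitz matrices form a subclass of all positive matrices with entries in the prescribed range.

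The substantive direction is $(2)\Rightarrow(3)$, which is the content of the Schoenberg--Rudin theorem. I would translate the hypothesis via Herglotz's theorem: positive semidefinite Toeplitz matrices of all orders correspond bijectively to sequences of Fourier coefficients of finite non-negative measures on the torus $\mathbb{T}$, so hypothesis~$(2)$ says that $F$ sends every such coefficient sequence with values in $(-1,1)$ to a sequence of the same kind. I would then test against judiciously chosen families of admissible measures on $\mathbb{T}$---weighted point masses and averaged translates thereof---producing positive trigonometric polynomials whose transform under $F$ can be inspected coefficient by coefficient. Reading off information from arbitrarily high-order harmonics in this manner forces the formal Taylor coefficients of $F$ at the origin to be non-negative, after which real analyticity on $(-1,1)$ and absolute monotonicity on $(0,1)$ follow from the resulting power series representation.

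The main obstacle is this last implication, and specifically the removal of Schoenberg's continuity hypothesis accomplished by Rudin in~\cite{Rudin59}: the coefficient-extraction step has to be performed without any a priori regularity of $F$, which requires a delicate approximation argument on the torus rather than direct differentiation or one-sided limits of $F$.
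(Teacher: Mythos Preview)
Your cycle $(3)\Rightarrow(1)\Rightarrow(2)$ matches the paper's brief remarks: both directions are noted to follow from the Schur product theorem, and your write-up of the power-series argument is correct. For $(2)\Rightarrow(3)$ you outline the classical Herglotz/Rudin route, which is precisely what the paper \emph{cites} (to \cite{Schoenberg42,Rudin59}) rather than proves in place; the paper does not reproduce Rudin's argument.

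Where the paper diverges is in offering an independent proof that bypasses Toeplitz matrices and Herglotz's theorem entirely. The paper establishes the stronger Theorem~\ref{T2sided-general} (and its bounded-mass version, Theorem~\ref{Tbounded-mass}), showing that positivity preservation on \emph{Hankel} matrices of rank at most~$3$ already forces~$(3)$. This yields Theorem~\ref{TSchoenberg} as a corollary, since the relevant Hankel moment matrices are positive with entries in $(-\rho,\rho)$. The machinery is quite different from Rudin's: instead of Fourier coefficients on the torus, the paper uses Hamburger's theorem, the integration trick~\eqref{Etrick} against polynomials non-negative on $[-1,1]$, a Horn-type fixed-dimension result for Hankel matrices (Theorem~\ref{Thorn-hankel}), and a mollifier/normal-families argument to pass from continuous to smooth to entire. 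What your approach buys is directness and fidelity to the original literature; what the paper's route buys is a strict strengthening of the hypothesis (Hankel instead of Toeplitz, rank~$\leq 3$ suffices) and a framework that extends to the one-sided, closed-interval, totally-non-negative, and multivariable settings treated later in the paper.
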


The facts that $(3) \implies (1)$ and $(3) \implies (2)$ follow from the
Schur product theorem~\cite{Schur1911}. However, the converse results are
highly non-trivial.

In the present paper, we consider moments of measures on the line
rather than Fourier coefficients, so power moments rather than complex
exponential moments. Hence we study functions $F$ mapping moment
sequences entrywise into themselves, i.e., such that for every
admissible measure $\mu$, there exists an admissible measure
$\sigma = \sigma_\mu$ satisfying
\[
F(s_k(\mu)) = s_k(\sigma) \qquad \text{for all } k \geq 0.
\]
Equivalently, by Theorem \ref{Thamburger}, we study entrywise
endomorphisms of the cone of positive Hankel matrices with real
entries. The following notion of entrywise calculus is central to this
paper.

\begin{definition}
Given a domain $D \subset \R$ and a function $F : D \to \R$, the
function $F[-]$ acts on the set of matrices with entries in~$D$, by
applying $F$ entrywise:
\[
F[ A ] := (F(a_{ij})) \qquad \text{for the matrix } A = (a_{ij}).
\]
The function $F$ also acts entrywise on moment sequences with all
moments in $D$, so that $F[ \bs( \mu ) ]_k := F( s_k( \mu ) )$ for all
$k \geq 0$, and similarly for truncated moment sequences.
\end{definition}

An observation on positivity preservers made by Loewner and developed
by Horn~\cite{horn} provides the following necessary condition for a
function to preserve positivity on $\bp_N((0,\infty))$ when applied
entrywise.

\begin{theorem}[Horn]\label{Thorn}
If a continuous function $F : ( 0, \infty) \to \R$ is such that
$F[ - ]: \bp_N( ( 0, \infty ) ) \to \bp_N( \R )$, then
$F \in C^{N - 3}( ( 0, \infty ) )$ and $F^{( k )}( x ) \geq 0$ for all
$x > 0$ and all $0 \leq k \leq N - 3$. Moreover, if it is known that
$F \in C^{N - 1}( ( 0, \infty ) )$, then $F^{( k )}( x ) \geq 0$ for
all $x > 0$ and all $0 \leq k \leq N - 1$.
\end{theorem}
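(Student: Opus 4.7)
The plan is to build positive semidefinite test matrices as rank-one perturbations of a constant matrix, and then use Vandermonde independence to isolate individual derivatives of $F$.

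For the second assertion, assume $F \in C^{N-1}((0,\infty))$. Fix $x > 0$, pick distinct $u_1,\dots,u_N > 0$, and for small $t > 0$ consider
\[
A(t) := (x + t u_i u_j)_{i,j=1}^N = xJ + t\bu\bu^T \in \bp_N((0,\infty)),
\]
where $J$ is the $N \times N$ all-ones matrix and $\bu = (u_1,\dots,u_N)^T$. Writing $\mathbf{w}_k := (u_1^k,\dots,u_N^k)^T$ and expanding by Taylor's theorem entry by entry,
\[
F[A(t)] = \sum_{k=0}^{N-1} \frac{F^{(k)}(x)}{k!}\, t^k\, \mathbf{w}_k \mathbf{w}_k^T + o(t^{N-1}).
\]
Because the $u_i$ are distinct, the vectors $\mathbf{w}_0,\dots,\mathbf{w}_{N-1}$ form a basis of $\R^N$ (Vandermonde), so for each $m \in \{0,\dots,N-1\}$ one can choose a dual vector $\bv_m \in \R^N$ with $\bv_m^T \mathbf{w}_k = \delta_{km}$. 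Then
\[
0 \leq \bv_m^T F[A(t)] \bv_m = \frac{F^{(m)}(x)}{m!}\, t^m + o(t^m) \qquad (t \to 0^+),
\]
and dividing by $t^m$ and sending $t \to 0^+$ yields $F^{(m)}(x) \geq 0$ for each $0 \leq m \leq N-1$.

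For the first assertion, only continuity of $F$ is assumed, so regularity of $F$ must be deduced. The idea is to mollify while respecting the positivity hypothesis. Fix $\phi \in C^\infty_c([0,1])$ with $\phi \geq 0$ and $\int \phi = 1$, set $\phi_\delta(t) := \delta^{-1}\phi(t/\delta)$, and define
\[
F_\delta(x) := \int_0^\delta F(x + t)\, \phi_\delta(t)\std t \qquad (x > 0),
\]
which lies in $C^\infty((0,\infty))$. The crucial observation is that if $A \in \bp_N((0,\infty))$ then $A + tJ \in \bp_N((0,\infty))$ for every $t \geq 0$, so
\[
F_\delta[A] = \int_0^\delta F[A + tJ]\, \phi_\delta(t)\std t
\]
is an average of positive semidefinite matrices, hence positive. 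Applying the previous case to $F_\delta$ yields $F_\delta^{(k)} \geq 0$ on $(0,\infty)$ for $0 \leq k \leq N-1$; in particular $F_\delta^{(k)}$ is non-decreasing for $k \leq N-2$. Since $F_\delta \to F$ uniformly on compacts of $(0,\infty)$, induction on~$k$ using the monotonicity estimate
\[
F_\delta^{(k)}(b) \;\leq\; \frac{F_\delta^{(k-1)}(c) - F_\delta^{(k-1)}(b)}{c - b} \qquad (c > b)
\]
shows that each $F_\delta^{(k)}$, $k \leq N-2$, is uniformly bounded in $\delta$ on compact subintervals of $(0,\infty)$. Combined with the resulting uniform Lipschitz bound on $F_\delta^{(k)}$ for $k \leq N-3$, a diagonal Arzel\`a--Ascoli argument extracts a subsequence along which each $F_\delta^{(k)}$ converges uniformly on compacts; standard facts on interchange of limits and derivatives identify these limits with $F, F', \dots, F^{(N-3)}$, giving $F \in C^{N-3}((0,\infty))$ with non-negative derivatives up to that order.

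The main obstacle is the mollification step: any smoothing operation must preserve the hypothesis that $F$ acts positively on $\bp_N((0,\infty))$, and an ordinary convolution of $F$ on its own domain does not, since uniformly shifting the entries of a positive matrix downward generally destroys positivity. The rescue is to average only over positive shifts $A \mapsto A + tJ$, under which the rank-one perturbation by $J \geq 0$ keeps us inside the positivity cone. Once this is in place, the drop from $N-1$ derivatives in the smooth case down to $N-3$ in the continuous case is simply the price of running Arzel\`a--Ascoli through two integrations of the monotonicity estimate above.
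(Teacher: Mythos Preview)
The paper does not supply its own proof of this theorem: it is attributed to Horn \cite{horn}, and when the paper proves the Hankel refinement (Theorem~\ref{Thorn-hankel}) it simply says ``the result follows by repeating the argument in~\cite[Theorem~1.2]{horn}''. Your reconstruction is correct and is, in spirit, exactly the classical argument the paper is pointing to. The smooth case via the Taylor expansion of $F[xJ + t\bu\bu^T]$ and Vandermonde duality is Horn's device; the paper itself runs the same computation in Proposition~\ref{Phorn-hankel}. For the continuous case, your one-sided mollifier $F_\delta(x)=\int_0^\delta F(x+t)\phi_\delta(t)\std t$, chosen precisely so that $A\mapsto A+tJ$ stays inside $\bp_N((0,\infty))$, is the same trick the paper uses in the proof of Theorem~\ref{T2sided-general} and alludes to in Theorem~\ref{Thorn-hankel2}. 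The subsequent bootstrap (monotonicity $\Rightarrow$ uniform derivative bounds $\Rightarrow$ Arzel\`a--Ascoli) is standard and correctly accounts for the loss of two orders of regularity between the $C^{N-1}$ and the merely continuous hypotheses.

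One small point worth making explicit in the induction for the uniform bounds: to bound $F_\delta^{(k)}$ on a compact $K\subset(0,\infty)$ you need $F_\delta^{(k-1)}$ controlled on a strictly larger compact, so the induction should run over a fixed nested sequence $K=K_{N-2}\subset\cdots\subset K_0\subset(0,\infty)$ chosen at the outset. This is implicit in what you wrote but would bear a one-line clarification.
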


The main idea in the proof is to develop into Taylor series a
perturbation determinant
\[
\det [ F(a + t u_j u_k)]_{j,k=1}^N
\]
and isolate the first non-zero coefficient as a universal constant
times the product $F(a) F'(a) \cdots F^{(N-3)}(a)$. Our prior work in
fixed dimension has amply exploited the symmetry and combinatorial
flavor of similar determinants~\cite{BGKP-fixeddim}.

\section{Main results in 1D}\label{S1dmain}

We state in this brief section our main results, restricted to the
one-variable case. The proofs will be given in subsequent sections
with a gradual increase in technicality, which also applies the
statements of these results. A leading thread is the isolation of
minimal sets of matrices for the verification of preservers, without
altering the conclusions. We remind the reader that all functions in
this article act entry by entry on moment sequences and matrices.

The following theorem, the first in a series to be established below,
gives an idea of the type of positive Hankel-matrix preservers we
seek.

\begin{theorem}\label{T2sided}
A function $F : \R \to \R$ maps $\moment([-1,1])$ into itself when
applied entrywise, if and only if $F$ is the restriction
to~$\R$ of an absolutely monotonic entire function.
\end{theorem}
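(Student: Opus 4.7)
The plan is to prove both directions, with sufficiency reducing to a push-forward construction and necessity requiring a genuine rigidity result for Hankel positivity preservers.

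For sufficiency, let $F(z) = \sum_{n \geq 0} c_n z^n$ be absolutely monotonic entire and $\mu \in \meas^+([-1,1])$ have total mass $M = s_0(\mu)$. Writing $\pi_n : [-1,1]^n \to [-1,1]$ for the product map $(x_1, \ldots, x_n) \mapsto x_1 \cdots x_n$ (and $\pi_0$ for the map sending the one-point space to $1$), I would set
\[
\sigma := \sum_{n \geq 0} c_n (\pi_n)_*(\mu^{\otimes n}) \in \meas^+([-1,1]).
\]
The total mass is $\sum_n c_n M^n = F(M) < \infty$ because $F$ is entire, and a direct calculation gives $s_k(\sigma) = \sum_n c_n s_k(\mu)^n = F(s_k(\mu))$ for every $k \geq 0$, as required.

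For necessity, the hypothesis translates through Theorem \ref{Thamburger}(2) into the statement that $F$ preserves the cone of positive Hankel matrices with uniformly bounded entries. Since the measures in $\meas^+([-1,1])$ with total mass at most $R$ produce exactly the positive Hankel matrices with entries in $[-R, R]$, the restriction $F|_{[-R, R]}$ is a Hankel positivity preserver for every $R > 0$. A preliminary probe with the two-atomic measures $a \delta_1 + b \delta_{-1}$, whose moment sequences alternate as $(a+b, a-b, a+b, \ldots)$, immediately forces $F \geq 0$ and non-decreasing on $[0, \infty)$ together with $|F(x)| \leq F(|x|)$ on all of $\R$; in particular $F$ is bounded on compact sets.

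The heart of the proof is then to upgrade this to absolute monotonicity of $F$ on every $[0, R]$. My approach would be to probe $F$ with moment sequences of finitely atomic measures $\sum_{j=1}^{N} c_j \delta_{x_j}$ on $[-1, 1]$, which produce positive Hankel matrices of rank at most $N$; the non-negativity of the principal minors of their entrywise images under $F$ translates into a hierarchy of divided-difference inequalities on $F$, and in the limit into the non-negativity of every derivative of $F$ at every point of $(0, R)$. By Bernstein's theorem (see Widder \cite{widder}), this is precisely absolute monotonicity of $F$ on $[0, R]$. This Hankel rigidity step is the principal obstacle: Theorem \ref{TSchoenberg} cannot be invoked directly, because Hankel matrices form a far thinner subfamily of the positive cone than general or Toeplitz matrices, so the required positivity-preserver statement has to be established from scratch using the low-rank Hankel structure. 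This is the central technical contribution of the sections that follow.

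Once absolute monotonicity holds on $[0, R]$ for every $R > 0$, the Taylor series of $F$ at the origin has non-negative coefficients and infinite radius of convergence, so $F$ coincides on $[0, \infty)$ with an entire function $\widetilde F(z) = \sum_n a_n z^n$ with $a_n \geq 0$. The parallel real-analyticity of $F$ on each $(-R, R)$ supplied by the Hankel rigidity step, combined with analytic continuation, then identifies $F$ with $\widetilde F$ on all of $\R$, completing the proof.
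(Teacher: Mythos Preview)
Your sufficiency argument via the push-forward $\sigma = \sum_{n \geq 0} c_n (\pi_n)_*(\mu^{\otimes n})$ is correct and in fact more explicit than the paper's route, which establishes positivity of $F[H_\mu]$ via the Schur product theorem and then invokes Theorem~\ref{Thamburger}(2) to produce the representing measure. Your construction exhibits $\sigma$ directly and shows at once that it is supported on $[-1,1]$.

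The necessity sketch has a genuine gap at the last step. Getting absolute monotonicity of $F$ on $(0,\infho)$ from Hankel positivity is indeed the main technical engine, and your outline of it via finitely atomic measures and divided-difference/minor inequalities matches the paper's strategy (carried out in Theorems~\ref{Thorn-hankel} and~\ref{T1sided-general}). But the sentence ``the parallel real-analyticity of $F$ on each $(-R,R)$ supplied by the Hankel rigidity step'' hides a separate, nontrivial argument that is \emph{not} parallel to the positive-axis one. Nothing in the divided-difference mechanism you describe yields information about $F$ at negative arguments: on $(-\infty,0)$ there is no monotonicity or sign pattern to exploit, and absolute monotonicity on $[0,\infty)$ says nothing about $F(-a)$.

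The paper bridges this as follows. First it proves two-sided continuity of $F$ (Proposition~\ref{Pcont}) by integrating the image measure $\sigma$ against the test polynomials $(1\pm t)(1-t^2)$, applied to moments of $a\delta_{-1}+b\delta_{v_0}$; this is the ``integration trick''~\eqref{Etrick}. A mollifier then reduces to smooth $F$. Finally, with $\mu = a\delta_{-1} + e^{x}\delta_{e^{-h}}$ and test polynomials $(1\pm t)(1-t^2)^n$, the same trick yields the derivative comparison
\[
\bigl|H_{-,a}^{(n)}(x)\bigr| \leq \bigl|H_{+,a}^{(n)}(x)\bigr|, \qquad H_{\pm,a}(x):=F(\pm a + e^{x}),
\]
which transfers the already-known analyticity of $H_{+,a}$ to $H_{-,a}$, and hence to $F$ on $(-\infty,0)$ after a change of variable. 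None of these three ingredients---continuity on the negative axis, the mollifier reduction, and the derivative-domination inequality---is visible in your outline; identifying at least the mechanism for the last one is what your proposal is missing.
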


In particular, Theorem~\ref{T2sided} strengthens the Schoenberg--Rudin
Theorem~\ref{TSchoenberg}, by relaxing the assumptions in \cite{Rudin59,
Schoenberg42} to require positivity preservation only for Hankel
matrices. Theorem~\ref{T2sided} is proved in Section~\ref{Smomentsm11}
with three further strengthenings: we use test sets with at most three
points (corresponding to Hankel test matrices of rank at most three),
the measures are allowed to have a mass constraint, enabling us to
classify functions
$F : ( -\rho, \rho ) \to \R$, where $0 < \rho \leq \infty$, and we
show that allowing functions to map entrywise into the co-domain
$\moment(\R)$ does not enlarge the class of preservers.

Our next result is a one-sided variant of the above characterizations,
following Horn~\cite[Theorem 1.2]{horn}. Akin to Theorem~\ref{T2sided},
it arrives at the same conclusion under weaker assumptions than
in~\cite{horn}.

\begin{theorem}\label{T1sided}
A function $F : [0,\infty) \to \R$ maps $\moment([0,1])$ into itself
when applied entrywise, if and only if $F$ is absolutely monotonic on
$(0,\infty)$, so non-decreasing, and
$0 \leq F(0) \leq \lim_{\epsilon \to 0^+} F(\epsilon)$.
\end{theorem}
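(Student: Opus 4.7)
The plan is to handle the two implications separately.

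For $(\Leftarrow)$, suppose $F$ is absolutely monotonic on $(0, \infty)$, so that $F(x) = \sum_{n \geq 0} a_n x^n$ for $x > 0$ with $a_n \geq 0$ and $a_0 = \lim_{\epsilon \to 0^+} F(\epsilon)$, while $0 \leq F(0) \leq a_0$. The key identity is that for any $\mu \in \meas^+([0, 1])$ with moment sequence $\bs$, the sequence $(s_k^n)_k$ is itself in $\moment([0, 1])$: it is the moment sequence of the pushforward of $\mu^{\otimes n}$ under the multiplication map $[0, 1]^n \to [0, 1]$, $(x_1, \ldots, x_n) \mapsto x_1 \cdots x_n$. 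Since $\moment([0, 1])$ is a closed convex cone (closedness follows from weak-$\ast$ compactness of bounded positive measures on the compact set $[0, 1]$), the non-negative combination $\sum_n a_n (s_k^n)_k = (F(s_k))_k$ lies in $\moment([0, 1])$ provided all $s_k > 0$. The exceptional case $\mu = s_0 \delta_0$ yields the image $(F(s_0), F(0), F(0), \ldots)$, which is the moment sequence of $(F(s_0) - F(0)) \delta_0 + F(0) \delta_1$; this measure is admissible because $F(s_0) \geq a_0 \geq F(0) \geq 0$.

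For $(\Rightarrow)$, testing $F$ on the constant sequences $(c, c, \ldots) = \bs(c \delta_1)$ immediately forces $F \geq 0$ on $[0, \infty)$, which in particular gives $F(0) \geq 0$. To deduce absolute monotonicity of $F$ on $(0, \infty)$, the plan is to reduce to, or adapt the proof of, Theorem~\ref{T2sided}. By Theorem~\ref{Thamburger}(3), $\moment([0, 1])$ consists exactly of the sequences whose Hankel and shifted Hankel matrices are both positive with uniformly bounded non-negative entries. Using the dilation $(s_k)_k \mapsto (t^k s_k)_k$ (pushforward under $x \mapsto tx$ for $t \in [0, 1]$) together with positive scaling and finite convex combinations of point masses, I would construct dense parametric families of test moment sequences whose images under $F$ must satisfy Hankel positivity; combined with the rigidity analysis underlying Theorem~\ref{T2sided}, this should force $F$ restricted to $(0, R)$ to be absolutely monotonic for every $R > 0$, and by locality of absolute monotonicity this yields the conclusion on all of $(0, \infty)$. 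Finally, the boundary inequality $F(0) \leq \lim_{\epsilon \to 0^+} F(\epsilon)$ is extracted from the $2 \times 2$ Hankel determinant of the image of perturbations $s_0 \delta_0 + \epsilon \nu$ with $\nu \in \meas^+((0, 1])$ as $\epsilon \to 0^+$: the positivity condition, which relates $F(s_0 + O(\epsilon))$, $F(0)$, and values $F(\epsilon \eta)$ for small $\eta > 0$, rules out $F(0) > \lim_{\epsilon \to 0^+} F(\epsilon)$.

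The principal obstacle I anticipate is the reduction step for absolute monotonicity on $(0, \infty)$. The Hankel matrices arising from $\moment([0, 1])$ form a proper sub-cone of all positive Hankel matrices, cut out by the additional Stieltjes condition $H^{(1)} \succeq 0$, so Theorem~\ref{T2sided} does not apply verbatim; while Horn's classical dimension-free theorem does yield absolute monotonicity, it requires $F$ to preserve positivity on \emph{all} positive matrices with entries in $[0, \infty)$, a substantially stronger hypothesis than is available here. The expected resolution is to verify that finitely supported measures on $[0, 1]$ already generate enough one-parameter deformations of moment sequences to encode divided differences of $F$ of every order and force them to be non-negative, in close parallel with the proof of Theorem~\ref{T2sided}, and without invoking test matrices that contain negative entries -- a luxury not available in the one-sided setting.
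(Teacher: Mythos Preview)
Your $(\Leftarrow)$ direction is correct. The pushforward observation that $(s_k^n)_k$ is the moment sequence of the image of $\mu^{\otimes n}$ under multiplication is a pleasant alternative to the paper's appeal to the Schur product theorem, and your treatment of the degenerate case $\mu = s_0\delta_0$ is exactly right.

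The $(\Rightarrow)$ direction, however, contains a genuine gap: you have correctly identified the obstacle but not resolved it. Saying that finitely supported measures ``should generate enough one-parameter deformations to encode divided differences of every order'' is a hope, not an argument, and you give no mechanism for extracting higher-order positivity of $F$ from Hankel positivity of its images. The paper fills this gap with a concrete device you are missing. Given $\mu \in \meas^+([0,1])$ with $F[\bs(\mu)] = \bs(\sigma)$, integrating any polynomial $p(t) = \sum_k a_k t^k$ that is non-negative on $[0,1]$ against $\sigma$ yields the linear inequality
\[
\sum_k a_k\, F(s_k(\mu)) \;=\; \int_0^1 p(t)\,\std\sigma(t) \;\geq\; 0.
\]
Taking $p(t) = (1-t)^n$ and $\mu$ a finite combination of point masses at $e^{-t_j h}$ with weights $c_j e^{-t_j\alpha}$ converts this into $(-1)^n \Delta_h^n(F \circ g)(\alpha) \geq 0$ for $g(x) = \sum_j c_j e^{-t_j x}$. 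Thus $F \circ g$ is completely monotonic for every such $g$, hence (by density and Bernstein's theorem) for every completely monotonic $g$; a theorem of Lorch and Newman then forces $F$ itself to be absolutely monotonic on $(0,\infty)$. The paper also gives a second proof, based on a Hankel refinement of Horn's fixed-dimension theorem, which again supplies a specific family of test matrices and a continuity argument. Either route requires an explicit mechanism; your proposal stops short of one.

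Your extraction of the boundary inequality $F(0) \leq \lim_{\epsilon\to 0^+} F(\epsilon)$ is essentially correct but can be done more directly: applying $F$ to the moment sequence $(a,0,0,\ldots)$ of $a\delta_0$ must yield a moment sequence, forcing $F(a) \geq F(0)$ for all $a>0$, and letting $a \to 0^+$ finishes it.
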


In Section~\ref{Smoments01}, we use results of Bernstein and
Lorch--Newman to prove Theorem~\ref{T1sided}, and then provide a
strengthening of it, Theorem~\ref{T1sided-general}, in the spirit
described above after Theorem~\ref{T2sided}. Here, we can replace
$\moment([0,1])$ by test measures supported on at most two points.

Next, we provide a classification of the preservers of
$\moment([0,\infty))$, Theorem~\ref{Ttn}, which gives a
Schoenberg-type characterization of functions preserving total
non-negativity. It is akin to Theorem \ref{T1sided}, and provides a
connection between moment sequences, totally non-negative Hankel
matrices, and their preservers; see Section~\ref{TN} for the proof.

\begin{theorem}\label{Ttn}
For a function $F : [0,\infty) \to \R$, the following are equivalent.
\begin{enumerate}
\item Applied entrywise, the function $F$ preserves positive
semidefiniteness on the set~$\HTN$ of all totally
non-negative Hankel matrices.

\item Applied entrywise, the function $F$ preserves the set
$\HTN$.

\item Applied entrywise, the function $F$ sends $\moment([0,\infty))$
to itself.

\item The function $F$ agrees on $(0,\infty)$ with an
absolutely monotonic entire function and
$0 \leq F(0) \leq \lim_{\epsilon \to 0^+} F(\epsilon)$.
\end{enumerate}
\end{theorem}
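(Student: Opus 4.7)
My plan is to check the cycle $(1) \Leftrightarrow (2) \Leftrightarrow (3)$ via classical correspondences, deduce $(4) \Rightarrow (3)$ using multiplicative convolution, and reserve the bulk of the work for $(3) \Rightarrow (4)$, which I would reduce to Theorem~\ref{T1sided}. The equivalence of~$(1)$,~$(2)$,~$(3)$ rests on the classical identification $\HTN = \{H_\mu : \mu \in \meas^+([0,\infty))\}$, which combined with Theorem~\ref{Thamburger}(1) makes $(2) \Leftrightarrow (3)$ and $(2) \Rightarrow (1)$ immediate. For $(1) \Rightarrow (2)$, note that if $H = (s_{i+j}) \in \HTN$ arises from $\mu \in \meas^+([0,\infty))$, then $H^{(1)} = (s_{i+j+1})$ is the Hankel matrix of the admissible measure $x \std \mu$, which again lives on $[0,\infty)$; hence $H^{(1)} \in \HTN$ too. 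Applying~$(1)$ to both $H$ and $H^{(1)}$ shows that the entrywise image is a positive semidefinite Hankel matrix whose shift is also positive semidefinite, so it is itself in $\HTN$.

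For $(4) \Rightarrow (3)$, write $F(u) = \sum_{n \geq 0} a_n u^n$ on $(0,\infty)$ with $a_n \geq 0$. For $\mu \in \meas^+([0,\infty))$ not concentrated at $0$, the $n$-fold multiplicative convolution $\mu^{*n}$ (the law of $X_1 \cdots X_n$ for iid $X_i \sim \mu$) lies in $\meas^+([0,\infty))$ with $k$-th moment $s_k^n$, so the measure $\sigma := a_0 \delta_1 + \sum_{n \geq 1} a_n \mu^{*n}$ is admissible on $[0,\infty)$ and has moment sequence $F(\bs(\mu))$. When $\mu = c \delta_0$, take instead $\sigma := (F(c) - F(0))\delta_0 + F(0)\delta_1$, whose non-negativity follows from $F(0) \leq \lim_{\epsilon \to 0^+} F(\epsilon) = a_0 \leq F(c)$.

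The crux is $(3) \Rightarrow (4)$. I would first establish that $F \geq 0$ on $[0,\infty)$ (using the test $\mu = u \delta_1$, whose image $(F(u), F(u), \ldots)$ forces $F(u) \geq 0$) and quasi-monotonicity, namely $F(u) \geq F(v)$ whenever $u \geq v \geq 0$ and $F(v) > 0$ (using the test measure $(u - v)\delta_0 + v\delta_1$, whose moment sequence $(u, v, v, \ldots)$ has image with $2 \times 2$ Hankel minor $F(v)(F(u) - F(v)) \geq 0$). A careful dichotomy argument, combining Hankel minor constraints along geometric progressions $(x_0^{k/n})_k$ coming from $\mu = \delta_{x_0^{1/n}}$ with a density argument, shows that either $F \equiv 0$ on $(0, \infty)$ (in which case $(4)$ holds trivially) or $F > 0$ throughout $(0, \infty)$, whence $F$ is non-decreasing on $[0, \infty)$. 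In the latter case, for any $\mu \in \meas^+([0,1])$ the entries $s_k$ of $\bs(\mu)$ satisfy $s_k \leq s_0$, so $F(s_k) \leq F(s_0)$, and the image lies in $\moment([0,\infty))$ with entries bounded by its $0$-th entry; by Theorem~\ref{Thamburger}(3) this sequence in fact belongs to $\moment([0,1])$. Thus $F$ maps $\moment([0,1])$ into itself, and Theorem~\ref{T1sided} delivers the absolute monotonicity of $F$ on $(0,\infty)$ with the boundary condition $0 \leq F(0) \leq \lim_{\epsilon \to 0^+} F(\epsilon)$. A standard Bernstein argument then promotes absolute monotonicity on all of $(0, \infty)$ to an entire extension, since the Taylor series at $0$ has non-negative coefficients and must converge on all of $(0,\infty)$, hence on all of $\C$.

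I expect the main obstacle to be the dichotomy step: ruling out an $F$ that vanishes on part of $(0,\infty)$ while being non-zero elsewhere requires careful propagation of zeros along geometric progressions via Hankel minor inequalities together with a density argument, and one must verify that the quasi-monotonicity then compels global vanishing rather than a piecewise configuration.
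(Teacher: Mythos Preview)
Your overall strategy is sound and close in spirit to the paper's, but there are two points worth noting.

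First, the dichotomy step as you describe it has a small gap. The measures $\delta_{x_0^{1/n}}$ have zeroth moment equal to~$1$, so the geometric progressions $(x_0^{k/n})_{k\geq 1}$ you obtain lie entirely on one side of~$1$ (in $(0,1)$ if $x_0<1$, in $(1,\infty)$ if $x_0>1$). Propagation of zeros along these progressions, even combined with density and your quasi-monotonicity, does not by itself force vanishing on the other side of~$1$: quasi-monotonicity only pushes information upward from points where $F>0$, not from points where $F=0$. The fix is immediate: use scaled point masses $a\delta_x$ instead, whose $2\times 2$ Hankel minor gives $F(a)F(ax^2)\geq F(ax)^2$ for all $a,x>0$; a single zero $F(a)=0$ then forces $F(ax)=0$ for every $x>0$, so $F\equiv 0$ on $(0,\infty)$ in one stroke. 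This is essentially how the paper proceeds: it records the inequality $F(a)F(b)\geq F(\sqrt{ab})^2$ via the measure $a\delta_{\sqrt{b/a}}$ and then invokes Theorem~\ref{T1sided-general} directly, which absorbs the entire dichotomy-and-monotonicity discussion into its hypotheses. Your route through Theorem~\ref{T1sided} is correct once the dichotomy is repaired, but it reproves part of the machinery already packaged in Theorem~\ref{T1sided-general}.

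Second, your $(4)\Rightarrow(3)$ via multiplicative convolution is a genuinely different argument from the paper's. The paper instead shows $(4)\Rightarrow(1)$ using the Schur product theorem together with a structural analysis of which entries of a totally non-negative Hankel matrix can vanish, and then closes the cycle $(1)\Rightarrow(2)\Rightarrow(3)$ via Lemma~\ref{Ltn_fin} and Lemma~\ref{Ltn}. Your construction of $\sigma=a_0\delta_1+\sum_{n\geq 1}a_n\mu^{*n}$ is more explicit and measure-theoretic, giving the target measure directly rather than going through positivity of finite Hankel blocks; just be sure to note that $\mu^{*n}$ is defined as the pushforward of the product measure $\mu^{\otimes n}$ under multiplication (so that the construction makes sense without normalising $\mu$ to a probability measure). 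For the equivalence $(1)\Leftrightarrow(2)\Leftrightarrow(3)$, your argument and the paper's coincide, though the paper phrases $(1)\Rightarrow(2)$ via the finite-dimensional criterion Lemma~\ref{Ltn_fin} rather than the semi-infinite correspondence.
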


Our techniques lead to the following observation: the only
non-constant maps which preserve the set of all totally non-negative
matrices when applied entrywise are of the form $F( x ) = c x$,
where~$c > 0$. See Theorem~\ref{Ptn} for more details.

Returning to moment sequences, in the present paper we also study
preservers of $\moment( [ -1, 0 ] )$, and show that these are
classified as follows.

\begin{theorem}\label{Tminus}
The following are equivalent for a function $F : \R \to \R$.
\begin{enumerate}
\item Applied entrywise, $F$ maps $\moment([-1,0])$ into
$\moment((-\infty,0])$.

\item There exists an absolutely monotonic entire function
$\widetilde{F}$ such that
\[
F(x) = \begin{cases}
\widetilde{F}(x) & \text{if } x \in (0,\infty),\\
0 & \text{if } x=0,\\
-\widetilde{F}(-x) \qquad & \text{if } x \in (-\infty,0).
\end{cases}
\]
\end{enumerate}
\end{theorem}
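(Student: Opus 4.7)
For $(2) \Rightarrow (1)$, I would pass to the reflected measure on $[0,1]$ and invoke Theorem~\ref{Ttn}, applied to $F|_{[0,\infty)}$ (which satisfies condition~(4) of that theorem since $F(0) = 0 \leq \widetilde F(0) = \lim_{\epsilon \to 0^+} F(\epsilon)$). Given $\mu \in \meas^+([-1,0])$, let $\nu$ be its pushforward under $x \mapsto -x$; then $\nu \in \meas^+([0,1])$ with $s_k(\nu) = (-1)^k s_k(\mu) \geq 0$. Theorem~\ref{Ttn} yields some $\tilde\sigma \in \meas^+([0,\infty))$ with $s_k(\tilde\sigma) = F(s_k(\nu))$, and its reflection $\sigma$ to $(-\infty,0]$ then has $s_k(\sigma) = (-1)^k F(s_k(\nu)) = (-1)^k F((-1)^k s_k(\mu))$, which equals $F(s_k(\mu))$ by a direct parity check against the piecewise definition of $F$.

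For $(1) \Rightarrow (2)$, the plan is to extract successive constraints on $F$ from specific test measures in $\meas^+([-1,0])$. Applying $F$ entrywise to $\mu = c\delta_0$ for $c > 0$ yields the image sequence $(F(c), F(0), F(0), \ldots)$, whose reflection $(F(c), -F(0), F(0), -F(0), \ldots)$ must be term-wise non-negative; this forces $F(0) = 0$. Testing $\mu = c\delta_{-t}$ for $c > 0$, $t \in (0,1]$, similarly forces $F \geq 0$ on $(0,\infty)$ and $F \leq 0$ on $(-\infty,0)$.

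The decisive test is $\mu = c\delta_{-1}$, which is admissible since $-1 \in [-1,0]$. The reflected image is the period-two sequence $(F_+(c), F_-(c), F_+(c), F_-(c), \ldots)$, where $F_+(c) := F(c)$ and $F_-(c) := -F(-c)$. Positive semidefiniteness of the $2 \times 2$ leading block of the associated Hankel matrix forces $F_+(c) \geq F_-(c)$, while that of the $2 \times 2$ leading block of $H^{(1)}$ (which starts with $F_-(c)$ in its top-left) forces the reverse inequality. Hence $F_+(c) = F_-(c)$ for every $c > 0$, so $F$ is odd on $\R$. With oddness established, the reflection argument applied to an arbitrary $\mu \in \meas^+([-1,0])$ (with reflection $\nu$) simplifies the reflected image to $(\widetilde F(s_k(\nu)))_{k \geq 0}$, where $\widetilde F := F|_{[0,\infty)}$; this sequence must lie in $\moment([0,\infty))$, so $\widetilde F$ maps $\moment([0,1])$ into $\moment([0,\infty))$.

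\textbf{Main obstacle.} The remaining step, to conclude that $\widetilde F$ is absolutely monotonic on $(0, \infty)$ and hence extends to an absolutely monotonic entire function, is the hardest. Neither Theorem~\ref{T1sided} nor Theorem~\ref{Ttn} applies verbatim, as our $\widetilde F$ maps $\moment([0,1])$ into $\moment([0,\infty))$ rather than into $\moment([0,1])$, and its domain is not all of $\moment([0,\infty))$. Nevertheless, the Bernstein-style derivation of absolute monotonicity in the proof of Theorem~\ref{T1sided} depends only on positivity-preservation of Hankel matrices formed from $\moment([0,1])$-inputs. Since the total mass of such test measures is unrestricted, $\widetilde F$ is constrained at every positive real, and the standard determinantal analysis yields absolute monotonicity of $\widetilde F$ on all of $(0, \infty)$. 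The upgrade to an entire AM function is then routine: the Taylor coefficients of $\widetilde F$ at $0^+$ are non-negative and their partial sums on any $[0,R]$ are bounded by $\widetilde F(R)$, yielding radius of convergence $\infty$; combining with $F(0) = 0$ and oddness gives the piecewise formula in~(2).
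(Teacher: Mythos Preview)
Your approach is correct and genuinely different from the paper's, though one step needs sharpening.

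\textbf{Comparison.} The paper proves Theorem~\ref{Tminus} via the stronger Theorem~\ref{Tminus-odd}. For $(1)\Rightarrow(2)$ it first extracts the even-indexed rows and columns of the Hankel matrices (giving embeddings such as $\moment(\{1,u_0^2\})\hookrightarrow\moment(\{-1,-u_0\})$), applies Theorem~\ref{Treformulation} to obtain absolute monotonicity on $(0,\infty)$, and only \emph{afterwards} derives oddness using the integration trick~(\ref{Etrick}) with $p_n(t)=(-t)^n(1+t)$. You reverse the order: you establish oddness first, by a direct Stieltjes argument on the period-two sequence $(F_+(c),F_-(c),F_+(c),\dots)$, and then reflect to reduce to $\widetilde F:\moment([0,1])\to\moment([0,\infty))$. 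Your oddness proof is slicker than the paper's polynomial trick, since it uses only two $2\times2$ minors (of $H$ and of $H^{(1)}$) rather than an integral identity.

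\textbf{The imprecision.} Your final step is right in spirit but cites the wrong tool. You appeal to ``the Bernstein-style derivation in the proof of Theorem~\ref{T1sided}'', but that argument (Proof~1) uses the trick~(\ref{Etrick}) with $p(t)=(1-t)^n$, which requires the target measure $\sigma$ to be supported on $[0,1]$; with $\sigma$ only on $[0,\infty)$ the integral $\int_0^\infty(1-t)^n\,d\sigma$ need not be non-negative. What you should invoke instead is Theorem~\ref{T1sided-general} (equivalently Theorem~\ref{Treformulation} for all $N$): its hypothesis~(1) requires only that $F[-]$ land in $\moment(\R)$, and $\moment([0,\infty))\subset\moment(\R)$. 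Since your reflected $\widetilde F$ maps $\moment(\{1,u_0\})\cup\moment(\{0,1\})$ into $\moment(\R)$ and satisfies $\widetilde F(a)\widetilde F(b)\geq\widetilde F(\sqrt{ab})^2$ (from rank-one Hankel inputs), Theorem~\ref{T1sided-general} applies directly and yields absolute monotonicity on $(0,\infty)$. With that correction your argument is complete.
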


It is striking to observe the possibility of a discontinuity at the
origin, in both of the previous theorems. For the proof of this
result, we refer the reader to Section~\ref{Smomentsm10}.

We also derive a similar description of the functions that
transform $\moment([-1,0])$ into $\moment([0,\infty))$; see
Theorem~\ref{Tminus-even}.  In this variant,  we observe that~$F$ may
also be discontinuous at~$0$.

The arguments used to show Theorem~\ref{TSchoenberg} and its one-sided
variant by Schoenberg, Rudin, and Horn do not carry over to our
setting involving positive Hankel matrices. This is due to the fact
that the hypotheses in Theorems~\ref{T2sided} and~\ref{T1sided} are
significantly weaker.

We show below how to further relax quite substantially the assumptions in
Theorem~\ref{T2sided} (Section~\ref{Smomentsm11}), Theorem~\ref{T1sided}
(Section~\ref{Smoments01}), and Theorem~\ref{Tminus}
(Section~\ref{Smomentsm10}). In doing so, our goal is to understand the
minimal amount of information that is equivalent to the requirement that
a function preserves $\moment([0,1])$ or $\moment([-1,1])$ when applied
entrywise. We will demonstrate that requiring a function to preserve
moments for measures supported on at most three points, is equivalent to
preserving moments for all measures. In particular, this shows that
preserving positivity for positive Hankel matrices of rank at most three
implies positivity preservation for all positive matrices.

This latter point prompts a comparison to the case of Toeplitz matrices
considered in~\cite{Rudin59}. Rudin proved that
Theorem~\ref{TSchoenberg}(3) holds if $F$ preserves positivity on a
two-parameter family of Toeplitz matrices with rank at most $3$, namely
\begin{equation}\label{Etoeplitz}
\{ ( a + b \cos ((i-j) \theta) )_{i,j \geq 1} : \
a,b \geq 0, \ a+b < 1 \},
\end{equation}
where $\theta$ is a fixed real number such that $\theta / \pi$ is
irrational. Similarly, the present work shows that for power moments,
it suffices to work with families of positive Hankel matrices of rank
at most three. Theorem~\ref{T2sided-general}(1) contains the precise
details.

\section{Moment transformers on $[0,1]$}\label{Smoments01}

Over the course of the next four sections, we will formulate and prove
strengthened versions of the announced results.

Here, we provide two proofs of Theorem~\ref{T1sided}. The first is
natural from the point of view of moments and Hankel matrices.  The
proof proceeds by first deriving from positivity considerations some
inequalities satisfied by all moments transformers. We then obtain the
desired characterization by appealing to classical results on
completely monotonic functions.  This is in the spirit of Lorch and
Newman \cite{Lorch-Newman}, who in turn are very much indebted to the
original Hausdorff approach to the moment problem via summation rules
and higher-order finite differences.

Using Theorem~\ref{Tbernstein}, we now provide our first proof
of Theorem \ref{T1sided}.

\begin{proof}[Proof~1 of Theorem~\ref{T1sided}]
The `if' part follows from two statements: (i) absolutely monotonic
entire functions preserve positivity on all matrices of all orders,
by the Schur product theorem; (ii) moment matrices from elements of
$\moment([0,1])$ have zero entries if and only if
$\mu = a \delta_0$ for some $a \geq 0$.

Conversely, suppose the function $F$ preserves $\moment([0,1])$ when
applied entrywise, i.e., given any~$\mu \in \meas^+([0,1])$, there exists
$\sigma \in \meas^+([0,1])$ such that
\[
F(s_k(\mu)) = s_k(\sigma) \quad \text{for all } k \geq 0.
\]
Let $p(t) = a_0 t^0 + \cdots + a_d t^d$ be a real polynomial such that
$p(t) \geq 0$ on $[0,1]$. Then,
\begin{equation}\label{Etrick}
0 \leq \int_0^1 p(t) \std\sigma(t) = \sum_{k=0}^d a_k s_k(\sigma)
= \sum_{k=0}^d a_k F(s_k(\mu)).
\end{equation}

Here and below, we employ~(\ref{Etrick}) with a careful choice of
measure~$\mu$ and polynomial~$p$ to deduce additional information
about the function $F$. In the present situation, fix finitely many
scalars $c_j$, $t_j > 0$ and an integer $n \geq 0$, and set
\begin{equation}\label{Echoice}
p(t) = (1-t)^n \quad \text{and} \quad
\mu = \sum_j e^{- t_j \alpha} c_j \delta_{e^{-t_j h}},
\end{equation}
where $\alpha > 0$ and $h>0$. Now let $g(x) := \sum_j c_j e^{-t_j x}$,
and apply~(\ref{Etrick}) to see that the forward finite differences of
$F \circ g$ alternate in sign. That is,
\[
\sum_{k=0}^n (-1)^k \binom{n}{k} F\left(
\sum_j c_j e^{- t_j \alpha - t_j kh}\right) \geq 0,
\]
so $(-1)^n \Delta^n_h(F \circ g)(\alpha) \geq 0$. As this holds
for all $\alpha$, $h > 0$ and all $n \geq 0$, it follows that
$F \circ g : (0,\infty) \to (0,\infty)$ is completely monotonic for
all~$\mu$ as in~(\ref{Echoice}). Using the weak density of such
measures in $\meas^+((0,\infty))$, together with Bernstein's theorem
(Theorem \ref{Tbernstein}), it follows that
$F \circ g$ is completely monotonic on $(0,\infty)$ for all completely
monotonic functions $g : (0,\infty) \to (0,\infty)$. Finally, a
theorem of Lorch and Newman \cite[Theorem 5]{Lorch-Newman} now gives
that $F : (0,\infty) \to (0,\infty)$ is absolutely monotonic.
\end{proof}

Our second proof of Theorem \ref{T1sided} involves a significant
relaxation of its hypotheses. Our first observation is that, if $F$
preserves positivity for $2 \times 2$ matrices, and sends
$\moment(\{ 1, u_0 \})$ to $\moment(\R)$ for a single $u_0 \in (0,1)$,
then $F$ is absolutely monotonic on $(0,\infty)$. Further relaxation
may be obtained by working with mass-constrained measures.

\begin{theorem}\label{T1sided-general}
Fix scalars $\rho$ and $u_0$, with $0 < \rho \leq \infty$ and
$u_0 \in (0,1)$. Given a function $F : [0,\rho) \to \R$, the following
are equivalent.
\begin{enumerate}
\item The map $F[-]$ sends
$\momentr(\{ 1,u_0 \}) \cup \momentr(\{ 0, 1 \})$ into
$\moment(\R)$, and
$F(a) F(b) \geq F(\sqrt{ab})^2$ for all~$a$, $b \in [0, \rho)$.

\item The map $F[-]$ sends $\momentr([0,1])$ into
$\moment([0,1])$.

\item The function $F$ agrees on $(0,\rho)$ with an absolutely
monotonic entire function and
$0 \leq F(0) \leq \lim_{\epsilon \to 0^+} F(\epsilon)$.
\end{enumerate}
If $F$ is known to be continuous on $(0,\rho)$, then the second
hypothesis in~$(1)$ may be omitted.
\end{theorem}

Note that assertion (1) is \emph{a priori} significantly weaker than
the requirement that $F$ preserves $\moment([0,1])$, at least when
$\rho = \infty$, say. Moreover, hypothesis~(3) here is the same as
hypothesis~(4) in Theorem~\ref{Ttn}, and Theorem~\ref{T1sided-general}
is used to prove that result in Section~\ref{TN}.

We now turn to proving Theorem~\ref{T1sided-general}. This requires
results on functions preserving positivity for matrices of a fixed
dimension, which we now develop.

As shown in \cite[Theorem 4.1]{GKR-lowrank}, the same result can be
obtained by working only with a particular family of rank-two
matrices, without the continuity assumption, and on any domain
$(0,\rho)$ as above. In the next theorem, Horn's hypotheses are
relaxed even further by making appeal only to Hankel matrices.

\begin{theorem}\label{Thorn-hankel}
Let $F : I \to \R$, where $I := ( 0, \rho )$ and
$0 < \rho \leq \infty$. Fix $u_0 \in (0,1)$ and an integer $N \geq 3$,
and let $\bu := (1, u_0, \dots, u_0^{N-1})^T$. Suppose $F[-]$
preserves positivity on $\bp_2(I)$, and
$F[ A ] \in \bp_N( \R )$ for the family of Hankel matrices
\begin{equation}\label{Ehorn}
\{ A = a \bone_{N \times N} + b \bu \bu^T :\ a \in [ 0, \rho ), \ 
b \in [ 0, \rho - a), \ 0 < a+b < \rho \}.
\end{equation}
Then $F \in C^{N - 3}(I)$, with
\[
F^{( k )}( x ) \geq 0 \qquad
\text{for all } x \in I \qquad (0 \leq k \leq N - 3),
\]
and $F^{(N - 3)}$ is a convex non-decreasing function on~$I$.
If, further, $F \in C^{N - 1}( I )$, then $F^{( k )}( x ) \geq 0$ for
all $x \in I$ and $0 \leq k \leq N - 1$.

Finally, if $F$ is assumed to be continuous on $I$, then the
assumption that $F$ preserves positivity on $\bp_2(I)$ is not
necessary.
\end{theorem}

\begin{remark}\label{Rhorn-hankel}
In fact, our proof of Theorem~\ref{Thorn-hankel} reveals that these
hypotheses may be relaxed slightly, by replacing the test set
$\bp_2((0,\rho))$ with the collection of rank-one matrices
$\bp_2^1((0,\rho))$ and all matrices of the form
\begin{equation}\label{Eabbb}
\begin{pmatrix} a & b \\ b & b \end{pmatrix}
\qquad \text{with } a > b > 0.
\end{equation}
\end{remark}

The proof of  Theorem~\ref{Thorn-hankel} relies on Lemma ~\ref{Lrank1Hankel}.

\begin{proof}[Proof of Theorem \ref{Thorn-hankel}]
If $F \in C(I)$, then the result follows by repeating the argument
in~\cite[Theorem~1.2]{horn}, but with the vector $\alpha$
replaced by a vector $\bu \in \R^N$ as in Lemma~\ref{Lrank1Hankel}.

Now suppose $F$ is an arbitrary function, which is not identically
zero on~$(0,\rho)$; we claim that $F$ must be continuous.  We first
show that $F(x) \neq 0$ for all $x \in (0,\rho)$. Indeed, suppose
$F(c) = 0$ for some $c \in (0,\rho)$. Given $d \in (c, \rho)$, define
a sufficiently long geometric progression $u'_0 = c$, \dots,
$u'_n = d$, such that $u'_{n+1} \in (d,\rho)$. By considering the
matrices
\[
F[A_j], \quad \text{where } A_j := \begin{pmatrix} u'_j & u'_{j+1}\\
u'_{j+1} & u'_{j+2} \end{pmatrix}, \qquad 0 \leq j \leq n-1,
\]
we obtain that $F(d) = 0$ for all $d \in (c,\rho)$. A similar argument
applies to $d \in (0,c)$, showing that $F \equiv 0$ on $(0,\rho)$.

Next, since $F[-]$ preserves positivity on $\bp_2^1((0,\rho))$ and is
positive on $(0,\rho)$, it follows that $g :  x \mapsto \log F(e^x)$ is
midpoint convex on the interval $(-\infty,\log \rho)$.  Moreover,
applying $F[-]$ to matrices of the form (\ref{Eabbb})
shows that $F$ is non-decreasing. Hence, by
\cite[Theorem~71.C]{roberts-varberg}, the function $g$ is necessarily
continuous on~$(-\infty,\log \rho)$, and so~$F$ is continuous
on~$(0,\rho)$. This proves the result in the general case.
\end{proof}

Using the above result, we can now prove Theorem~\ref{Thorn-hankel}.

\begin{proof}[Proof of Theorem \ref{Thorn-hankel}]
If $F \in C(I)$, then the result follows by repeating the argument
in~\cite[Theorem~1.2]{horn}, but with the vector $\alpha$
replaced by a vector $\bu \in \R^N$ as in Lemma~\ref{Lrank1Hankel}.

Now suppose $F$ is an arbitrary function, which is not identically
zero on~$(0,\rho)$; we claim that $F$ must be continuous.  We first
show that $F(x) \neq 0$ for all $x \in (0,\rho)$. Indeed, suppose
$F(c) = 0$ for some $c \in (0,\rho)$. Given $d \in (c, \rho)$, define
a sufficiently long geometric progression $u'_0 = c$, \dots,
$u'_n = d$, such that $u'_{n+1} \in (d,\rho)$. By considering the
matrices
\[
F[A_j], \quad \text{where } A_j := \begin{pmatrix} u'_j & u'_{j+1}\\
u'_{j+1} & u'_{j+2} \end{pmatrix}, \qquad 0 \leq j \leq n-1,
\]
we obtain that $F(d) = 0$ for all $d \in (c,\rho)$. A similar argument
applies to $d \in (0,c)$, showing that $F \equiv 0$ on $(0,\rho)$.

Next, since $F[-]$ preserves positivity on $\bp_2^1((0,\rho))$ and is
positive on $(0,\rho)$, it follows that $g :  x \mapsto \log F(e^x)$ is
midpoint convex on the interval $(-\infty,\log \rho)$.  Moreover,
applying $F[-]$ to matrices of the form (\ref{Eabbb})
shows that $F$ is non-decreasing. Hence, by
\cite[Theorem~71.C]{roberts-varberg}, the function $g$ is necessarily
continuous on~$(-\infty,\log \rho)$, and so~$F$ is continuous
on~$(0,\rho)$. This proves the result in the general case.
\end{proof}

Finally, we turn to the proof of Theorem~\ref{T1sided-general}, which
provides a second proof of Theorem~\ref{T1sided} which is more
informative. We first observe that Theorem~\ref{Thorn-hankel} can be
reformulated in terms of moment sequences, using the fact that the
matrices occurring in the statement of the theorem can be realized as
truncations of positive Hankel matrices; see
Definition~\ref{Dtruncate}.

\begin{theorem}\label{Treformulation}
Let $F : I \to \R$, where $I = (0,\rho)$ and
$0 < \rho \leq \infty$, and fix $N \geq 3$. Suppose
$F[-]$ maps the moment sequences in $\momentr_{2N - 2}( \{ 1, u_0 \} )$
with positive entries to
\[
\{ ( s_0( \mu ), \ldots, s_{2 N - 3}( \mu ), s_{2 N - 2}( \mu ) + t )
: \mu \in \meas^+( \R ), \ t \geq 0 \}
\]
for some $u_0 \in (0,1)$, and the moment sequences in
$\momentr_2( \{ 0, 1 \} ) \cup \momentr_2( \{ u \} )$ with positive
entries to $\moment_2(\R)$ for all $u \in (0,1)$. Then
$F \in C^{N - 3}(I)$, with
\[
F^{( k )}( x ) \geq 0 \qquad
\text{for all } x \in I \qquad (0 \leq k \leq N - 3),
\]
and $F^{(N - 3)}$ is a convex non-decreasing function on~$I$.
If, further, it is known that $F \in C^{N - 1}(I)$, then
$F^{( k )}( x ) \geq 0$ for all $x > 0$ and $0 \leq k \leq N - 1$.

If $F$ is continuous on~$I$, then the assumption that $F[-]$ maps
elements of $\momentr_2(\{ u \})$ into $\moment_2(\R)$ for
all~$u \in (0,1)$ may be omitted.
\end{theorem}

\begin{proof}
In view of Hamburger's Theorem for truncated moment sequences, a
Hankel matrix with entries in the first and last columns given by
\[
s_0, \dots, s_{N-1} \quad \text{and} \quad
s_{N-1}, \dots, s_{2N-2}
\]
is positive if and only if
$( s_0, \dots, s_{2N-3} ) \in \moment_{2N-3}(\R)$, and
$s_{2N-2} \geq \int x^{2N-2} \std\mu$, where~$\mu$ is any non-negative
measure  with the first $2N-2$ moments equal to
$( s_0, \dots, s_{2N-3} )$. (For details, see
Akhiezer's book \cite[Theorem~2.6.3]{Akhiezer}.)

Furthermore, in order to show continuity in Theorem~\ref{Thorn-hankel}
we only required $2 \times 2$ submatrices, of the form~(\ref{Eabbb})
or of rank one. Moreover, every matrix in $\bp_2(\R)$ is a truncated
moment matrix.

These observations show that Theorem~\ref{Treformulation} is
equivalent to Theorem~\ref{Thorn-hankel}.
\end{proof}

We now prove Theorem~\ref{T1sided-general}, with the help of
Theorem~\ref{Treformulation}. 

\begin{proof}[Proof of Theorem \ref{T1sided-general}]
Clearly $(2) \implies (1)$. Next, assume (3) holds, and suppose
$\mu \in \meas^+([0,1])$ with $s_0(\mu) < \rho$.  If
$\mu = a \delta_0$ for some $a \geq 0$ then~$(2)$ is immediate;
henceforth we will assume $H_\mu$ has no zero entries, where $H_\mu$
is as defined in~(\ref{Ehankel}). Now, $F[H_\mu]$ is positive, by the
Schur product theorem and the fact that the only moment matrices
arising from elements of $\momentr([0,1])$ with zero entries come from
$\momentr(\{ 0 \})$. Clearly $F[\bs(\mu)]$ is uniformly bounded, hence
comes from a unique measure $\sigma$ supported on $[-1,1]$, by
Theorem~\ref{Thamburger}. Recalling Definition~\ref{Dtruncate}, we
have that
\[
F[H_\mu]^{(1)} = \sum_{n \geq 0} c_n [H_\mu^{(1)}]^{\circ n},
\]
where $F(x) = \sum_{n \geq 0} c_n x^n$ by the hypotheses and
Theorem~\ref{Twidder}. Note that $F[H_\mu]^{(1)}$ is positive, by the
above computation and Theorem~\ref{Thamburger}, since $\mu$ is
supported on~$[0,1]$. By the same result, $\sigma \in \meas^+([0,1])$,
which gives~(2).

It remains to show $(1) \implies (3)$. It is immediate that
mapping $\momentr_2(\{ 0, 1 \})$ into $\moment_2(\R)$ is equivalent to
mapping $\momentr(\{ 0, 1 \})$ into $\moment(\R)$. Thus, by
Theorem~\ref{Treformulation}, it holds that $F^{(k)}(x) \geq 0$ for
all~$x > 0$ and all~$k \geq 0$. Theorem~\ref{Twidder} now gives the
result, apart from the assertion about~$F(0)$, but this is immediate.
\end{proof}

We conclude this part by explaining why Theorem~\ref{T1sided-general}
provides a minimal set of rank-constrained positive semidefinite
matrices for which positivity preservation is equivalent to absolute
monotonicity.

\begin{definition}
For $1 \leq k \leq N$, let $\bp_N^k(I)$ denote the matrices in
$\bp_N(I)$ of rank at most $k$.
\end{definition}

\begin{remark}
A smaller set of rank-constrained matrices than that employed for
Theorem~\ref{T1sided-general} could not include a sequence of matrices
in $\bigcup_{N = 1}^\infty \bp_N^2([0,\rho))$ of unbounded dimension,
hence would be contained in
$P'_N := \bigcup_{n = 1}^N \bp_n^2( [ 0, \rho ) ) \cup
\bigcup_{n = 1}^\infty \bp_n^1( [ 0, \rho ) )$
for some $N \geq 1$.  However, as noted in the paragraphs preceding
Proposition~\ref{Pjain} below, the map $x \mapsto x^\alpha$ preserves
positivity on $P'_N$ for all $\alpha \geq N-2$, and such a function
may be non-analytic.
\end{remark}

\begin{remark}
The proof of Theorem~\ref{T1sided-general} also strengthens a 1979
result of Vasudeva~\cite{vasudeva79}. Vasudeva showed for
$I = ( 0, \infty )$ that if $F : I \to \R$ preserves positivity
entrywise on $\bp_N(I)$ for all $N \geq 1$ then $F$ is absolutely
monotonic and so is represented by a convergent power series on
$I$. The proof above shows that Vasudeva's result also holds if $I$ is
replaced by $(0,\rho)$ for any $\rho > 0$ and, for every $N$, the set
$\bp_N(I)$ is replaced by the subset of Hankel matrices within it of
rank at most $2$.
\end{remark}

\subsection{Hankel-matrix positivity preservers in fixed
dimension}\label{S31}

We conclude this section by addressing briefly the fixed-dimension
case for powers and analytic functions, as studied by FitzGerald
and Horn, and also in previous work by the authors
\cite{BGKP-fixeddim, FitzHorn, GKR-crit-2sided}. Our first result
shows that considerations of Hankel matrices may be used to strengthen
the main result in~\cite{BGKP-fixeddim}.
 
\begin{theorem}\label{Tthreshold2}
Fix $\rho > 0$ and integers $N \geq 1$ and $M \geq 0$, and let
$F( z ) = \sum_{j = 0}^{N - 1} c_j z^j + c' z^M$ be a
polynomial with real coefficients. The following are equivalent.
\begin{enumerate}
\item $F[ - ]$ preserves positivity on
$\bp_N( \overline{D}(0, \rho ) )$, where $\overline{D}( 0, \rho )$ is
the closed disc in the complex plane with center~$0$ and radius~$\rho$.

\item The coefficients $c_j$ satisfy either
$c_0$, \ldots, $c_{N - 1}$, $c' \geq 0$, or
$c_0$, \ldots, $c_{N - 1} > 0$ and
$c' \geq -\mathcal{C}( \bc; z^M; N, \rho )^{-1}$, where
\[
\mathcal{C}( \bc; z^M; N, \rho ) :=
\sum_{j = 0}^{N - 1} \binom{M}{j}^2 \binom{M - j - 1}{N - j - 1}^2
\frac{\rho^{M - j}}{c_j}.
\]

\item $F[ - ]$ preserves positivity on
Hankel matrices in $\bp_N^1( ( 0, \rho ) )$.
\end{enumerate}
\end{theorem}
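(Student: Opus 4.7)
The plan is to close the cycle $(2) \Rightarrow (1) \Rightarrow (3) \Rightarrow (2)$. The first link, $(2) \Rightarrow (1)$, is precisely the main theorem of~\cite{BGKP-fixeddim} and would be cited directly. The second link, $(1) \Rightarrow (3)$, is immediate: by Lemma~\ref{Lrank1Hankel}, every matrix in the test family for~(3) is a positive rank-one matrix with all entries in $( 0, \rho ) \subset \overline{D}( 0, \rho )$, hence lies in $\bp_N( \overline{D}( 0, \rho ) )$.

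The substance is $(3) \Rightarrow (2)$. By Lemma~\ref{Lrank1Hankel}, the rank-one positive Hankel matrices in $\bp_N^1( ( 0, \rho ) )$ are precisely those of the form $A( a, r ) := a^2 \bv_r \bv_r^T$, where $\bv_r := ( 1, r, r^2, \ldots, r^{N - 1} )^T$ and $( a, r )$ ranges over the admissible region on which all entries lie in $( 0, \rho )$. Since each monomial $z^k$ sends $a^2 \bv_r \bv_r^T$ entrywise to $a^{2 k} \bv_{r^k} \bv_{r^k}^T$, we obtain
\[
F[ A( a, r ) ] = \sum_{k = 0}^{N - 1} c_k a^{2 k} \bv_{r^k} \bv_{r^k}^T + c' a^{2 M} \bv_{r^M} \bv_{r^M}^T.
\]
For $r \in ( 0, 1 )$ the matrix $V_r := [\, \bv_1 \mid \bv_r \mid \cdots \mid \bv_{r^{N - 1}}\,]$ is a Vandermonde matrix with distinct positive nodes, hence invertible, and writing $\bv_{r^M} = V_r \bw( r )$ identifies $\bw( r ) \in \R^N$ as the monomial-basis coefficient vector of the unique polynomial of degree less than~$N$ interpolating $x \mapsto x^M$ at the nodes $1, r, r^2, \ldots, r^{N - 1}$. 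Setting $D( a ) := \mathrm{diag}( c_0, c_1 a^2, \ldots, c_{N - 1} a^{2 ( N - 1 )} )$, one obtains the factorization
\[
F[ A( a, r ) ] = V_r \bigl( D( a ) + c' a^{2 M} \bw( r ) \bw( r )^T \bigr) V_r^T.
\]

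Positivity of $F[ A( a, r ) ]$ for all admissible $( a, r )$ is therefore equivalent to positivity of the bracketed rank-one perturbation of a positive diagonal matrix. A standard Sherman--Morrison / determinantal analysis forces either $c_0, \ldots, c_{N - 1}, c' \geq 0$, or else $c_0, \ldots, c_{N - 1} > 0$ together with
\[
c' \geq -\Bigl( \sup_{( a, r )} \sum_{k = 0}^{N - 1} \frac{a^{2 ( M - k )} w_k( r )^2}{c_k} \Bigr)^{-1}.
\]

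The main obstacle, and really the only nontrivial step, is to identify this supremum with $\mathcal{C}( \bc; z^M; N, \rho )$. I expect it to be attained in the joint limit $a^2 \to \rho$ and $r \to 1^-$, in which the Vandermonde interpolation becomes confluent and $\bw( r )$ tends to the monomial-basis coefficients of the Taylor polynomial $\sum_{k = 0}^{N - 1} \binom{M}{k} ( x - 1 )^k$. Expanding in the monomial basis via the identities $\binom{M}{k} \binom{k}{j} = \binom{M}{j} \binom{M - j}{k - j}$ and $\sum_{\ell = 0}^{n} ( - 1 )^\ell \binom{m}{\ell} = ( - 1 )^n \binom{m - 1}{n}$ produces $\pm \binom{M}{j} \binom{M - j - 1}{N - j - 1}$ as the coefficient of $x^j$; squaring, weighting by $\rho^{M - j}/c_j$, and summing over $j$ reproduces $\mathcal{C}$ exactly. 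What remains is to confirm that this joint boundary limit genuinely realises the supremum over the admissible parameter region, which I would check by a monotonicity argument in $a$ (the integrand is a positive rational monomial) followed by a continuity argument in $r$ at~$1^-$.
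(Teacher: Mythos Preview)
Your approach is correct in spirit and differs genuinely from the paper's, but you have over-estimated what the final step requires.

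\textbf{Comparison.} The paper argues $(3')\Rightarrow(2)$ using two black boxes: Proposition~\ref{Phorn-hankel} (Taylor expansion at the origin) to force the sign pattern of $c_0,\ldots,c_{N-1}$, and then \cite[Equation~(3.11)]{BGKP-fixeddim} together with Schur polynomials and the Weyl character formula to identify the limiting bound as~$\mathcal{C}$. Your route---the Vandermonde factorisation $F[A(a,r)]=V_r\bigl(D(a)+c'a^{2M}\bw(r)\bw(r)^T\bigr)V_r^T$ followed by a rank-one perturbation analysis, and then confluent Lagrange interpolation to recognise the Taylor-polynomial coefficients $\pm\binom{M}{j}\binom{M-j-1}{N-j-1}$---is more elementary and self-contained, avoiding Schur polynomials entirely. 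The two computations are secretly the same (the bialternant expression for Schur polynomials is a ratio of generalised Vandermonde determinants), but yours needs no representation-theoretic input.

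\textbf{What you think is an obstacle is not one.} You do \emph{not} need to prove that the supremum of $Q(a,r):=\sum_j a^{2(M-j)}w_j(r)^2/c_j$ equals~$\mathcal{C}$. From $(3)$ you obtain $c'\geq -1/Q(a,r)$ for every admissible $(a,r)$; your limit computation shows $Q(a,r)\to\mathcal{C}$ along a sequence with $a^2\to\rho$, $r\to 1^-$, hence $c'\geq -1/\mathcal{C}$ by passing to the limit. That is already the inequality in~$(2)$, and the cycle closes via $(2)\Rightarrow(1)\Rightarrow(3)$. Your proposed ``continuity argument in $r$ at $1^-$'' would not by itself locate the supremum at that boundary point, but fortunately it is unnecessary.

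\textbf{One genuine gap to fill.} The dichotomy ``either all $c_j,c'\geq 0$, or all $c_j>0$'' is not quite standard and needs a line of justification. It follows by reading the $(j,j)$ entry $c_j a^{2j}+c'a^{2M}w_j(r)^2\geq 0$ and letting $a\to 0$: when $M\geq N$ the first term dominates (since $j<N\leq M$), forcing $c_j\geq 0$; if moreover $c_j=0$ and $c'<0$ one obtains $w_j(r)\equiv 0$, contradicting the fact that the interpolation coefficients are generically non-zero. The paper handles this via Proposition~\ref{Phorn-hankel} instead.
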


The strengthening here is the addition of the word `Hankel' to
hypothesis~(3).

\begin{remark}\label{Rspecialhankel}
As the following proof of Theorem~\ref{Tthreshold2} shows,
assumption~(3) can be relaxed further, by assuming $F$ preserves
positivity on a distinguished family of Hankel matrices. More
precisely, it can be replaced by
\begin{enumerate}
\item[$(3')$]
$F[-]$ preserves positivity on two sequences of rank-one Hankel
matrices,
\[
\{ b^n \rho \bu(b) \bu(b)^T,\ \rho \bu(b^n) \bu(b^n)^T : 
n \geq 1 \}, \quad \text{ for any fixed } b \in (0,1), 
\]
where
\begin{equation}\label{Euvector}
\bu(\epsilon) := (1 - \epsilon, (1-\epsilon)^2, \dots, (1-\epsilon)^N)^T,
\qquad \text{for any } \epsilon \in (0,1).
\end{equation}
\end{enumerate}
Note that $\bu(\epsilon) \bu(\epsilon)^T \in \bp_N^1(\R)$
is Hankel, by Lemma~\ref{Lrank1Hankel}.
\end{remark}

Thus, Remark~\ref{Rspecialhankel} gives a notable reduction of the
$N$-dimensional parameter space, $\bp_N^1((0,\rho))$, to the countable
subset of Hankel matrices required in~($3'$). If $N>1$, this is indeed
minimal information required to derive Theorem~\ref{Tthreshold2}(2),
since the extreme critical value $\mathcal{C}( \bc; z^M; N, \rho )$
cannot be attained on any finite set of matrices in
$\bp_N^1((0,\rho))$.

As a first step towards the proof of Theorem~\ref{Tthreshold2}, we
recall from \cite[Lemma~2.4]{BGKP-fixeddim} that, under suitable
differentiability assumptions, the conclusions of
Theorem~\ref{Thorn-hankel} still hold if one considers only rank-one
matrices. We now formulate a slightly stronger version of this result.

\begin{proposition}\label{Phorn-hankel}
Let $F \in C^\infty( (-\rho, \rho ))$, where $0 < \rho \leq \infty$. Fix
a vector $\bu \in (0,\sqrt{\rho})^N$ with distinct coordinates, and
suppose $F[b_n \bu \bu^T] \in \bp_N(\R)$ for a positive real
sequence $b_n \to 0^+$. Then the first $N$ non-zero derivatives of $F$
at $0$ are strictly positive.
\end{proposition}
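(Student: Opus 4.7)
The plan is to Taylor-expand $F$ at the origin and to isolate each leading non-zero coefficient by testing $F[b \bu \bu^T]$ against a vector obtained from a generalized Vandermonde construction. Write $c_k := F^{(k)}(0)/k!$ and let $k_1 < k_2 < \cdots$ enumerate those indices at which $c_k \neq 0$; assume there are at least $N$ such indices, else the statement is vacuous. The goal then becomes showing $c_{k_j} > 0$ for each $j = 1, \dots, N$.

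Since the values $b u_i u_l$ lie in a compact subinterval of $(-\rho, \rho)$ for all $b \in (0, 1)$ and all $i, l \in \{1, \dots, N\}$, Taylor's theorem with remainder produces a decomposition
\[
F[b \bu \bu^T] = \sum_{k=0}^{k_N} c_k b^k \bu^{\circ k} (\bu^{\circ k})^T + R(b),
\]
where $\bu^{\circ k} := (u_1^k, \dots, u_N^k)^T$ and every entry of the $N \times N$ matrix $R(b)$ is $O(b^{k_N + 1})$ as $b \to 0^+$, uniformly in $i, l$. Since the coordinates of $\bu$ are distinct and positive, the vectors $\bu^{\circ k_1}, \dots, \bu^{\circ k_N}$ form the columns of an invertible generalized Vandermonde matrix; so for each $j \in \{1, \dots, N\}$ there is a dual vector $\bv_j \in \R^N$ with $\bv_j^T \bu^{\circ k_i} = \delta_{ij}$.

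Applying the positivity hypothesis along $\bv_j$, and using that $c_k = 0$ for $0 \leq k \leq k_N$ with $k \notin \{k_1, \dots, k_N\}$, yields
\[
0 \leq \bv_j^T F[b \bu \bu^T] \bv_j = c_{k_j} b^{k_j} + O(b^{k_N + 1}).
\]
Dividing by $b^{k_j}$ and letting $b \to 0^+$ forces $c_{k_j} \geq 0$, and strict positivity then follows from $c_{k_j} \neq 0$. The main technical point is arranging that the remainder $O(b^{k_N+1})$ is of strictly higher order than the surviving principal term $c_{k_j} b^{k_j}$ for every $j \in \{1, \dots, N\}$, which is precisely what the $b \to 0^+$ argument requires; the construction of the dual vectors $\bv_j$ is then immediate from Vandermonde invertibility.
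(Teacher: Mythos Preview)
Your proof is correct and follows essentially the same approach as the paper's: Taylor-expand $F[b\bu\bu^T]$ up through the $N$th non-zero derivative order, invoke generalized Vandermonde invertibility to produce dual vectors $\bv_j$ that isolate each surviving term, and let $b\to 0^+$ to conclude. The paper's argument is the same in all essentials, differing only in notation (it writes $m_j$ for your $k_j$, normalizes the dual vectors by $m_j!$, and records the remainder as $o(b^{m_N})$ rather than your $O(b^{k_N+1})$).
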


The assumptions and conclusions of this result are similar to those of
Theorem~\ref{Thorn-hankel} above; a common generalization of both
results can be found in~\cite{Kh}.

\begin{proof}
For ease of exposition, we will assume $F$ has at least $N$ non-zero
derivatives at $0$, say of orders $m_1 < \cdots < m_N$, where
$m_1 \geq 0$. By results on generalized Vandermonde determinants
\cite[Chapter~XIII, \S8, Example~1]{Gantmacher_Vol2}, the vectors
$\{ \bu^{\circ m_j} : 1 \leq j \leq N \}$ are linearly independent.
Now, by Taylor's theorem, 
\begin{equation}\label{EhankelTaylor}
F[ b_n \bu \bu^T] = \sum_{j=1}^N \frac{F^{(m_j)}(0)}{m_j!}
b_n^{m_j} \bu^{\circ m_j} (\bu^{\circ m_j})^T + o( b_n^{m_N} ).
\end{equation}
For each $1 \leq k \leq N$, choose $\bv_k \in \R^N$ such that
$\bv_k^T \bu^{\circ m_j} = \delta_{j,k} m_j!$. Then,
\[
b_n^{-m_k} \bv_k^T F[ b_n \bu \bu^T ] \bv_k =
m_k! F^{(m_k)}(0) + o( b_n^{m_N - m_k} ) \geq 0,
\]
and letting $n \to \infty$ concludes the proof.
\end{proof}

We now use Proposition~\ref{Phorn-hankel} to prove the theorem.

\begin{proof}[Proof of Theorem \ref{Tthreshold2}]
In view of Remark~\ref{Rspecialhankel}
and~\cite[Theorem~1.1]{BGKP-fixeddim}, it suffices to show that
$(3') \implies (2)$.

Assume $(3')$ holds, and consider first the sequence
$b^n \rho \bu(b) \bu(b)^T$. If $0 \leq M < N$, then the result follows
from Proposition~\ref{Phorn-hankel}, since the critical value
is precisely $\mathcal{C}( \bc; z^M; N, \rho ) = c_M^{-1}$. Now suppose
$M \geq N$. Again using Proposition~\ref{Phorn-hankel}, either
$c_0$, \dots, $c_{N-1}$ and $c'$ are all non-negative, or else we have
that $c_0$, \dots, $c_{N-1} > 0 > c_M$.

In the latter case, to prove that
$c_M \geq -\mathcal{C}( \bc; z^M; N, \rho )^{-1}$, we use
the sequence $\rho \bu(b^n) \bu(b^n)^T$, where $\bu(b^n)$ is
defined as in~(\ref{Euvector}). Let $\bu_n := \sqrt{\rho} \bu(b^n)$ for
$n \geq 1$. Then \cite[Equation~(3.11)]{BGKP-fixeddim} implies that
$0 \leq \det |c_M|^{-1} F[ \bu_n \bu_n^T ]$, and so
\[
|c_M|^{-1} \geq \sum_{j=0}^{N-1} \frac{s_{\mu(M,N,j)}(\bu_n)^2}{c_j},
\]
where $\mu(M,N,j)$ is the hook partition
$(M-N+1,1,\dots,1,0,\dots,0)$, with $N-j-1$ ones after the first entry
and then $j$ zeros, and $s_{\mu(M,N,j)}$ is the corresponding Schur
polynomial.  As $n \to \infty$, so
$\bu_n \to \sqrt{\rho}(1,\dots,1)^T$.
The Weyl Character Formula in type~$A$ gives that
$\displaystyle s_{\mu(M,N,j)}(1,\dots,1) = 
\binom{M}{j} \binom{M-j-1}{N-j-1}$,
and it follows that
\[
|c_M|^{-1} \geq \sum_{j=0}^{N-1} \binom{M}{j}^2 \binom{M-j-1}{N-j-1}^2
\frac{\rho^{M-j}}{c_j} = \mathcal{C}( \bc; z^M; N, \rho ).
\]
Thus (2) holds, and this concludes the proof.
\end{proof}

Finally, we consider the question of which real powers preserve
positivity on $N \times N$ Hankel matrices. Recall that the Schur
product theorem guarantees that integer powers $x \mapsto x^k$
preserve positivity on~$\bp_N((0,\infty))$. It is natural to ask if
any other real powers do so. In~\cite{FitzHorn}, FitzGerald and Horn
solved this problem, and uncovered an intriguing transition.
In their main result, they show that the power function
$x \mapsto x^\alpha$ preserves positivity entrywise on
$\bp_N((0,\infty))$ if and only if $\alpha$ is a non-negative integer
or $\alpha \geq N-2$. The value $N-2$ is known in the literature as
the \emph{critical exponent} for preserving positivity.

As shown in \cite{GKR-crit-2sided}, the critical exponent remains
unchanged upon restricting the problem to preserving positivity on
$\bp_N^k((0,\infty))$ for any $k\geq2$. More precisely, for each
non-integral $\alpha \in (0,N-2)$, there exists a rank-two matrix
$A \in \bp_N^2((0,\infty))$ such that
$A^{\circ \alpha} \not\in \bp_N$; see~\cite{GKR-crit-2sided} for more
details. 

As we now show, the result does not change when restricted to the set
of positive semidefinite Hankel matrices.

\begin{proposition}\label{Pjain}
Let $2 \leq k \leq N$ and let $\alpha \in \R$. The following are
equivalent.
\begin{enumerate}
\item The power function $x \mapsto x^\alpha$ preserves positivity
when applied entrywise to Hankel matrices in $\bp_N^k((0,\infty))$.

\item The power $\alpha$ is a non-negative integer or
$\alpha \geq N - 2$.
\end{enumerate} 
Moreover, there exists a Hankel matrix $A \in \bp_N^2((0,\infty))$
such that $A^{\circ \alpha} \not\in \bp_N$ for all non-integral
$\alpha \in (0,N-2)$.
\end{proposition}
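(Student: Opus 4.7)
The plan is to establish (2) $\Rightarrow$ (1) by classical means and to derive (1) $\Rightarrow$ (2) from the ``moreover'' assertion, which is the substantive content of the proposition.

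For the easy direction, if $\alpha$ is a non-negative integer then the Schur product theorem yields $A^{\circ \alpha} \in \bp_N$ for every $A \in \bp_N$, and in particular for the Hankel matrices in $\bp_N^k((0,\infty))$; and if $\alpha \geq N - 2$, the FitzGerald--Horn theorem \cite{FitzHorn} gives preservation on all of $\bp_N((0,\infty))$, so certainly on Hankel matrices in $\bp_N^k((0,\infty))$.

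For the reverse implication, I would fix a non-integer $\alpha \in (-\infty, N - 2)$ and exhibit a rank-two Hankel matrix $A \in \bp_N^2((0, \infty)) \subset \bp_N^k((0,\infty))$ for which $A^{\circ \alpha} \notin \bp_N$.  Select $a \in (0, 1)$, set $\bu := (1, a, a^2, \ldots, a^{N-1})^T$, and consider
\[
A := \bone \bone^T + \epsilon^2 \bu \bu^T
\]
for an $\epsilon \in (0, 1)$ to be chosen in terms of $\alpha$.  By Lemma~\ref{Lrank1Hankel}, both summands are rank-one Hankel matrices, so $A$ is Hankel, has positive entries, is positive semidefinite, and has rank exactly two because $\bone$ and $\bu$ are linearly independent ($a \neq 1$).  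For $\epsilon \in (0, 1)$ one has $0 < \epsilon^2 a^{i + j} < 1$ for all $i, j \geq 0$, so the binomial series converges entrywise and
\[
A^{\circ \alpha} = \sum_{\ell = 0}^\infty \binom{\alpha}{\ell} \epsilon^{2 \ell} \bu^{\circ \ell} (\bu^{\circ \ell})^T.
\]
Let $m$ be the least non-negative integer with $\binom{\alpha}{m} < 0$: concretely, $m = 1$ if $\alpha < 0$, and $m = k + 1$ if $\alpha \in (k - 1, k)$ for some integer $k \geq 1$.  The hypothesis $\alpha < N - 2$ forces $m \leq N - 1$.  By the generalized Vandermonde identity \cite[Chapter~XIII, \S8]{Gantmacher_Vol2}, the vectors $\bu^{\circ 0}, \ldots, \bu^{\circ m}$ are linearly independent in $\R^N$, so there exists $\bv \in \R^N$ with
\[
\bv^T \bu^{\circ \ell} = 0 \quad (0 \leq \ell < m), \qquad \bv^T \bu^{\circ m} \neq 0.
\]
Then
\[
\bv^T A^{\circ \alpha} \bv = \binom{\alpha}{m} \epsilon^{2m} (\bv^T \bu^{\circ m})^2 + \sum_{\ell > m} \binom{\alpha}{\ell} \epsilon^{2 \ell} (\bv^T \bu^{\circ \ell})^2,
\]
and for all sufficiently small $\epsilon > 0$ the strictly negative leading term dominates the tail, so $\bv^T A^{\circ \alpha} \bv < 0$ and $A^{\circ \alpha} \notin \bp_N$.

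The main obstacle is to keep the witness matrix Hankel while retaining the Vandermonde rigidity needed to isolate the first negative binomial coefficient.  Choosing $\bu$ to be a geometric progression resolves both constraints simultaneously: Lemma~\ref{Lrank1Hankel} certifies that $\bu \bu^T$ is Hankel, while the generalized Vandermonde identity supplies the linear independence of the entrywise powers required to construct~$\bv$.  The constraint $\alpha < N - 2$ is used precisely to guarantee $m \leq N - 1$, leaving just enough room in $\R^N$ for the $m$ orthogonality conditions to admit a nonzero solution with $\bv^T \bu^{\circ m} \neq 0$.
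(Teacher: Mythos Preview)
Your argument correctly establishes the equivalence $(1)\Leftrightarrow(2)$: the easy direction via Schur and FitzGerald--Horn is fine, and your binomial-expansion argument with the vector~$\bv$ annihilating $\bu^{\circ 0},\dots,\bu^{\circ(m-1)}$ is a clean, self-contained way to produce, for each non-integer $\alpha<N-2$, a rank-two Hankel matrix whose $\alpha$th Hadamard power is not positive semidefinite. This is a genuinely different route from the paper's, which simply invokes Jain's theorem~\cite{Jain}: for distinct $x_1,\dots,x_N>0$ the matrix $((1+x_ix_j)^\alpha)$ is positive semidefinite if and only if $\alpha\in\N_0$ or $\alpha\geq N-2$. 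Choosing $x_i=a^{i-1}$ makes this matrix Hankel by Lemma~\ref{Lrank1Hankel}. Your approach has the virtue of being elementary and not relying on that external result.

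However, you have not proved the ``moreover'' clause as stated. The proposition asserts the existence of a \emph{single} Hankel matrix $A\in\bp_N^2((0,\infty))$ such that $A^{\circ\alpha}\notin\bp_N$ for \emph{all} non-integral $\alpha\in(0,N-2)$; the quantifier order is $\exists A\,\forall\alpha$. Your construction produces $A=\bone\bone^T+\epsilon^2\bu\bu^T$ with $\epsilon$ chosen small enough \emph{depending on}~$\alpha$ (through both $m$ and the magnitude of $\binom{\alpha}{m}$, which tends to~$0$ as $\alpha$ approaches an integer). So you have only shown $\forall\alpha\,\exists A$, which suffices for the equivalence but is strictly weaker than the ``moreover'' statement. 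The paper's appeal to Jain gives the stronger uniform conclusion for free: the fixed matrix $(1+a^{i+j})_{i,j=0}^{N-1}$ witnesses failure for every such~$\alpha$ simultaneously.
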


\begin{proof}
By the main result in~\cite{Jain}, for pairwise distinct real numbers
$x_1, \dots, x_N > 0$, the matrix $((1+x_i x_j)^\alpha)_{i,j=1}^N$ is
positive semidefinite if and only if $\alpha$ is a non-negative
integer or $\alpha \geq N-2$. The result now follows
immediately, by Lemma \ref{Lrank1Hankel}.
\end{proof}

Note that replacing $( 0, \infty )$ with $( 0, \rho )$ for some $\rho$
with $0 < \rho < \infty$ leads to the same classification of entrywise
powers preserving positivity on the reduced test set.

\section{Totally non-negative matrices}\label{TN}

With a better understanding of the endomorphisms of moment sequences
of positive measures, we turn next to the structure of preservers of
total non-negativity, in both the fixed-dimension and dimension-free
settings. Recall that a rectangular matrix is \emph{totally non-negative}
if every minor is a non-negative real number.

We begin with the well-known fact that moment sequences of positive
measures on~$[ 0, \infty )$ are in natural correspondence with totally
non-negative Hankel matrices.

\begin{lemma}\label{Ltn}
A real sequence $(s_k)_{k=0}^\infty$ is the moment sequence of a
positive measure on~$[ 0, \infty )$ if and only if the corresponding
semi-infinite Hankel matrix $H := (s_{i+j})_{i, j = 0}^\infty$ is
totally non-negative. The measure is supported on $[ 0, 1 ]$ if and
only if the entries of~$H$ are uniformly bounded.
\end{lemma}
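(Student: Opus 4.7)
The plan is to connect total non-negativity of the Hankel matrix with the existence of a representing measure on $[0,\infty)$ via two classical ingredients: the Andreief (or Heine--Cauchy--Binet) integral identity, which expresses arbitrary minors of a moment matrix as integrals of generalised Vandermonde determinants; and Theorem~\ref{Thamburger}(1), which already characterises $\moment([0,\infty))$ in terms of positivity of~$H$ together with its first-column truncation.

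For the forward direction, I would fix $\mu \in \meas^+([0,\infty))$ and examine the $n \times n$ minor of~$H$ indexed by rows $i_1 < \cdots < i_n$ and columns $j_1 < \cdots < j_n$. Recognising this as a Gram determinant and applying Andreief's identity yields
\[
\det(s_{i_p + j_q})_{p,q = 1}^n = \frac{1}{n!} \int_{[0,\infty)^n} \det(x_p^{i_q})_{p,q} \cdot \det(x_p^{j_q})_{p,q} \std\mu(x_1) \cdots \std\mu(x_n).
\]
The integrand is symmetric in $(x_1, \dots, x_n)$ because both Vandermonde factors change sign under transpositions of the $x_p$, so its sign need only be examined on the chamber $0 \leq x_1 < \cdots < x_n$. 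There, both factors are generalised Vandermondes in positive variables with strictly increasing exponents, hence non-negative. The minor is therefore non-negative, and $H$ is totally non-negative.

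For the converse, suppose $H$ is totally non-negative. All principal minors are non-negative, so each finite symmetric truncation of~$H$ is positive semidefinite, and hence so is $H$ itself. Moreover, the $n \times n$ principal minor of the shifted matrix $H^{(1)} = (s_{i+j+1})_{i,j \geq 0}$ at indices $i_1 < \cdots < i_n$ coincides with the minor of~$H$ at rows $i_1 < \cdots < i_n$ and columns $i_1 + 1 < \cdots < i_n + 1$, and is non-negative by hypothesis; hence $H^{(1)}$ is also positive semidefinite. Theorem~\ref{Thamburger}(1) then produces the desired $\mu \in \meas^+([0,\infty))$. For the refinement to~$[0,1]$: if $\mathrm{supp}(\mu) \subseteq [0,1]$ then $0 \leq s_k \leq s_0$ for all~$k$; conversely, if $\mu$ places positive mass on $(1, \infty)$ then monotone convergence forces $s_k \to \infty$. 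Thus uniform boundedness of the entries of~$H$ is equivalent to $\mu$ being supported on $[0,1]$.

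I do not expect any serious obstacle here. Andreief's identity is classical, and the observation that shifted minors of a totally non-negative Hankel matrix are themselves non-negative is essentially an index relabelling. The only point requiring mild care is the symmetry of the Vandermonde product on the full domain $[0,\infty)^n$ rather than merely on the sorted chamber, which is immediate from the sign behaviour under permutations noted above.
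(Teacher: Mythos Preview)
Your proof is correct. The paper's own treatment of this lemma is merely a citation to classical references (Gantmacher--Krein, Shohat--Tamarkin, and the introduction of Fallat--Johnson--Sokal) for the first claim, together with an appeal to Theorem~\ref{Thamburger}(3) for the $[0,1]$ refinement. By contrast, you give a self-contained argument: the forward direction via the Andreief identity is an elegant and direct route to non-negativity of all minors, while your converse reduces cleanly to Theorem~\ref{Thamburger}(1) by observing that total non-negativity of $H$ forces both $H$ and its column-shifted version $H^{(1)}$ to be positive semidefinite (the principal minors of $H^{(1)}$ being off-diagonal minors of $H$). Your direct monotone-convergence argument for the $[0,1]$ part is equally valid and arguably more transparent than invoking Theorem~\ref{Thamburger}(3). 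Overall, your approach is more informative than the paper's citation, at the modest cost of invoking Andreief's identity, which is classical but not stated elsewhere in the paper.
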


\begin{proof}
The first claim is a consequence of well-known results in
the theory of moments \cite{GK,STmoment}, as outlined in the
introduction to \cite{FJS}. 
For measures on $[0,1]$, the result now follows via
Theorem~\ref{Thamburger}(3).
\end{proof}

Lemma \ref{Ltn} also has a finite-dimensional version, which will be
required in the proof of Theorem~\ref{Ttn}.

\begin{lemma}[{\cite[Corollary 3.5]{FJS}}]\label{Ltn_fin}
Let $A$ be an $N \times N$ Hankel matrix. Then $A$ is totally
non-negative if and only if $A$ and its truncation $A^{(1)}$ have
non-negative principal minors.
\end{lemma}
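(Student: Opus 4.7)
The plan is to prove the equivalence in two directions: the forward one directly from the definition of total non-negativity, and the reverse one by reducing to the truncated Stieltjes moment problem and then invoking Lemma~\ref{Ltn}.

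The forward implication will be immediate. If $A$ is totally non-negative then every minor of $A$---in particular every principal minor---is non-negative. Moreover, for any subset $I \subseteq \{1, \ldots, N-1\}$, the principal minor of $A^{(1)}$ indexed by~$I$ coincides with the minor of~$A$ with rows~$I$ and columns $I + 1 := \{i+1 : i \in I\}$, which is non-negative for the same reason.

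For the reverse direction, I would first identify the principal minors of the non-square matrix~$A^{(1)}$ with those of the square $(N-1) \times (N-1)$ shifted Hankel matrix $B := (s_{i+j-1})_{i,j=1}^{N-1}$: both reduce to $\det(s_{i_p + i_q - 1})_{p,q=1}^{|I|}$ when $I = \{i_1 < \cdots < i_{|I|}\}$. Since $A$ and $B$ are symmetric by the Hankel structure, the assumption that both have non-negative principal minors is equivalent to $A$ and $B$ being positive (in the paper's sense of positive semidefinite). I would then invoke the classical truncated Stieltjes moment theorem: the joint positivity of~$A$ and~$B$ is precisely the necessary and sufficient condition for the existence of some measure $\mu \in \meas^+([0, \infty))$ whose first $2N-1$ moments are $s_0, s_1, \ldots, s_{2N-2}$. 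Extending the sequence via $s_k := s_k(\mu)$ for $k \geq 2N-1$ then yields a semi-infinite Hankel matrix~$\tilde{A}$ that is moment-generated by~$\mu$; Lemma~\ref{Ltn} gives that $\tilde{A}$ is totally non-negative, and since every minor of~$A$ is also a minor of~$\tilde{A}$, the matrix~$A$ inherits this property.

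The hard part will be the invocation of the truncated Stieltjes existence theorem, particularly in the degenerate cases where~$A$ or~$B$ is singular. In those situations the representing measure is finitely supported and essentially unique, but existence of \emph{some} positive measure on $[0, \infty)$ representing the given data remains classical (cf.\ Akhiezer's treatment of the moment problem in \cite{Akhiezer}). An alternative route would be to appeal to Fekete's theorem---reducing total non-negativity of a Hankel matrix to non-negativity of a distinguished family of contiguous minors---and then to derive those minors directly from the PSD conditions on~$A$ and its shift, but this combinatorial approach appears to require more delicate bookkeeping than the moment-theoretic one sketched above.
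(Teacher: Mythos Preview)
The paper does not prove this lemma; it merely cites \cite[Corollary~3.5]{FJS}. So there is no proof in the paper to compare against, and your proposal should be assessed on its own merits.

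Your argument has a genuine gap in the reverse direction. You assert that positive semidefiniteness of $A = (s_{i+j})_{i,j=0}^{N-1}$ and of the shifted matrix $B = (s_{i+j+1})_{i,j=0}^{N-2}$ suffices for the truncated Stieltjes moment problem to have a solution, and that this ``remains classical'' even in the degenerate case. This is false. Consider $N=3$ and the data $(s_0,\dots,s_4)=(1,0,0,0,1)$. Then
\[
A=\begin{pmatrix}1&0&0\\0&0&0\\0&0&1\end{pmatrix}\in\bp_3,\qquad
B=\begin{pmatrix}0&0\\0&0\end{pmatrix}\in\bp_2,
\]
so both hypotheses hold; yet any $\mu\in\meas^+([0,\infty))$ with $s_0(\mu)=1$ and $s_1(\mu)=\int_0^\infty x\std\mu=0$ must equal $\delta_0$, forcing $s_4(\mu)=0\neq 1$. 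Thus no representing measure exists. (One checks directly that $A$ is nevertheless totally non-negative, so the lemma itself is not contradicted.) The solvability of the truncated Stieltjes problem requires, beyond $A\geq 0$ and $B\geq 0$, a rank or recursiveness condition in the singular case; Akhiezer's treatment concerns the full moment problem and does not cover this.

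Your approach can be repaired by a perturbation: replace $s_k$ by $s_k+\epsilon\sum_{i=1}^N c_i^k$ for distinct $c_i>0$, which makes both Hankel matrices positive definite and hence genuinely moment-representable on $[0,\infty)$; total non-negativity of the perturbed $A$ then follows from Lemma~\ref{Ltn} as you outline, and letting $\epsilon\to 0^+$ recovers total non-negativity of $A$ since the class of totally non-negative matrices is closed. Alternatively, the combinatorial route you allude to---reducing via Fekete-type criteria to contiguous minors of $A$ and its shift---is closer in spirit to the argument in \cite{FJS}, and avoids the moment problem entirely.
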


With Lemmas~\ref{Ltn} and~\ref{Ltn_fin} in hand, we can now establish
our characterization of positivity preservers on~$\HTN$.

\begin{proof}[Proof of Theorem~\ref{Ttn}]
Suppose $(1)$ holds, and let $A \in \HTN$. Then both $F[A]$ and
$F[A]^{(1)} = F[A^{(1)}]$ have non-negative principal minors, 
so $F[A] \in \HTN$, by Lemma~\ref{Ltn_fin}. Thus $(1) \implies (2)$.

That $(2) \implies (3)$ follows directly from Lemma~\ref{Ltn}.
Next, suppose $(3)$ holds and let~$a > 0$ and $b \geq 0$. Applying
$F[-]$ to the first few moments of the measure $a \delta_{\sqrt{b/a}}$
shows that $F(a) F(b) \geq F( \sqrt{a b } )^2$. By
Theorem~\ref{T1sided-general}, we conclude that $(4)$ holds.

Finally, suppose $(4)$ holds and let $H \in \HTN_N$ for some
$N \geq 1$. If every entry of $H$ is non-zero, then $F[H]$ is positive
semidefinite, by the Schur product theorem. Otherwise, suppose $H$ has
a zero entry. Denote the entries in the first row and last column of
$H$ by $s_0$, \dots, $s_{N-1}$ and $s_{N-1}$, \dots, $s_{2N-2}$,
respectively. By considering $2 \times 2$ minors, it is easy to show
that
\[
s_0 = 0 \implies s_1 = 0 \iff s_2 = 0 \iff \dots \iff s_{2N-3} = 0
\impliedby s_{2N-2} = 0.
\]
Consequently, if $(4)$ holds and an entry of $H$ is zero, then
$F[H] \in \bp_N$.
\end{proof}

\begin{remark}
While Theorem~\ref{Ttn} is more natural to state for functions with
domain $[ 0, \infty )$, the proof goes through verbatim for
$F : [ 0, \rho ) \to \R$, where $0 < \rho < \infty$. In this
case, the test set $\mathcal{H}^{++}$ in the first two assertions of
Theorem~\ref{Ttn} (but not the target set) must be replaced by its
subset of matrices with entries in $[ 0, \rho )$.
\end{remark}

Next we examine the class of polynomial maps that,
when applied entrywise, preserve total non-negativity for Hankel
matrices of a fixed dimension. First, note that the analogue of the
Schur product theorem holds for totally non-negative Hankel matrices
\cite[Theorem~4.5]{FJS}; this also follows from Lemma~\ref{Ltn_fin}.
Second, note that the Hankel matrix
$H_\epsilon :=\bu(\epsilon) \bu(\epsilon)^T$ is totally non-negative
for all~$\epsilon \in ( 0, 1 )$, where $\bu(\epsilon)$ was defined
in~(\ref{Euvector}):
\[
\bu(\epsilon) := ( 1 - \epsilon, \dots, ( 1 - \epsilon )^N )^T.
\]
This holds because the elements of $H_\epsilon$ are all positive, and
the $k \times k$ minors of $H_\epsilon$ vanish if $k \geq 2$. As a
consequence, Proposition~\ref{Phorn-hankel} implies that if $F$ is a
polynomial which preserves positive semidefiniteness on $\HTN_N$, then
the first $N$ non-zero coefficients of $F$ must be positive.

The following result shows that the next coefficient can be
negative, with the same threshold as in Theorem~\ref{Tthreshold2}.

\begin{theorem}
Let $\rho$, $N$, $M$ and
$F( z ) = \sum_{j = 0}^{N - 1} c_j z^j + c' z^M$
be as in Theorem \ref{Tthreshold2}. The following are equivalent.
\begin{enumerate}
\item $F[ - ]$ preserves total non-negativity for elements of $\HTN_N$
with entries in $[ 0, \rho )$.

\item The coefficients $c_j$ satisfy either
$c_0$, \ldots, $c_{N - 1}$, $c' \geq 0$, or
$c_0$, \ldots, $c_{N - 1} > 0$ and
$c' \geq -\mathcal{C}( \bc; z^M; N, \rho )^{-1}$, where
\[
\mathcal{C}( \bc; z^M; N, \rho ) :=
\sum_{j = 0}^{N - 1} \binom{M}{j}^2 \binom{M - j - 1}{N - j - 1}^2
\frac{\rho^{M - j}}{c_j}.
\]

\item $F[ - ]$ preserves positivity for rank-one
elements of $\HTN_N$ with entries in $( 0, \rho )$.

\end{enumerate}
\end{theorem}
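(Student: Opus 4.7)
The plan is to establish the cycle $(1) \Rightarrow (3) \Rightarrow (2) \Rightarrow (1)$, using Theorem~\ref{Tthreshold2} (with Remark~\ref{Rspecialhankel}) and Lemma~\ref{Ltn_fin} as the key tools.

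For $(1) \Rightarrow (3)$, every rank-one Hankel matrix $\bu \bu^T$ with entries in $(0,\rho)$ lies in $\HTN_N$, by Lemma~\ref{Lrank1Hankel} together with the vanishing of all minors of order at least~$2$. Hypothesis~$(1)$ then yields that $F[\bu \bu^T]$ is totally non-negative, and being symmetric it is positive semidefinite. For $(3) \Rightarrow (2)$, invoke Remark~\ref{Rspecialhankel}, by which hypothesis~(3) of Theorem~\ref{Tthreshold2} reduces to positivity preservation on the two sequences $\{ b^n \rho\, \bu(b) \bu(b)^T : n \geq 1 \}$ and $\{ \rho\, \bu(b^n) \bu(b^n)^T : n \geq 1 \}$ for a fixed $b \in (0,1)$. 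Each such matrix is rank one with positive entries bounded above by $b^n \rho (1-b)^2$ or $\rho (1-b^n)^2$ respectively, both of which are strictly less than~$\rho$, so each lies in $\HTN_N$ with entries in~$(0,\rho)$. Thus hypothesis~(3) implies~(2) via the amended Theorem~\ref{Tthreshold2}.

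For $(2) \Rightarrow (1)$, the central step, suppose~(2) holds and let $H \in \HTN_N$ have entries in $[0,\rho)$. By Theorem~\ref{Tthreshold2}, $F[-]$ preserves positivity on $\bp_N(\overline{D}(0, \rho))$, and a standard padding argument extends this to $\bp_k([0,\rho))$ for every $1 \leq k \leq N$: given $A \in \bp_k([0,\rho))$, the $N \times N$ matrix $A'$ obtained by bordering $A$ with zeros lies in $\bp_N([0,\rho))$, so $F[A'] \in \bp_N$, and $F[A]$ is its top-left principal submatrix, hence positive semidefinite. Applying Lemma~\ref{Ltn_fin} to $H$ shows that both $H$ and the associated shifted $(N-1) \times (N-1)$ Hankel block are positive semidefinite with entries in $[0,\rho)$. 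The preceding extension then gives that $F[H]$ and $F[H^{(1)}]$ are each positive semidefinite, hence have non-negative principal minors, and a second application of Lemma~\ref{Ltn_fin} yields $F[H] \in \HTN_N$.

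The main obstacle is expected to be essentially bookkeeping: confirming the precise interpretation of ``principal minors of $H^{(1)}$'' in Lemma~\ref{Ltn_fin} (namely, those of the shifted $(N-1) \times (N-1)$ Hankel block associated with the truncation), and packaging the padding argument cleanly. All of the analytic heavy lifting is imported from Theorem~\ref{Tthreshold2}.
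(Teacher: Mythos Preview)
Your proof is correct and follows essentially the same route as the paper: the cycle $(1)\Rightarrow(3)\Rightarrow(2)\Rightarrow(1)$, with $(3)\Rightarrow(2)$ via Remark~\ref{Rspecialhankel} and Theorem~\ref{Tthreshold2}, and $(2)\Rightarrow(1)$ via Lemma~\ref{Ltn_fin} together with a zero-padding trick. The only cosmetic difference is in $(2)\Rightarrow(1)$: the paper pads the specific shifted $(N-1)\times(N-1)$ Hankel block to size $N\times N$ and applies $F[-]$ once at dimension~$N$, whereas you first prove the general dimension-reduction statement that $F[-]$ preserves $\bp_k([0,\rho))$ for all $k\le N$ and then apply it at $k=N-1$; these are the same argument.
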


\begin{proof}
Clearly $(1) \implies (3)$, and  $(3) \implies (3')$, where the
assertion $(3')$ is as in Remark~\ref{Rspecialhankel}. That
$(3') \implies (2)$ follows from the proof of
Theorem~\ref{Tthreshold2}.

To prove $(2) \implies (1)$, first observe from
Theorem~\ref{Tthreshold2} that $F[-]$ preserves positivity on
$\bp_N( [ 0, \rho ] )$. Given any matrix
$A \in \HTN_N$ with entries in $[ 0, \rho )$, let $B$ denote the
$N \times N$ matrix obtained by deleting the first column and last row
of $A$, and then adding a last row and column of zeros. Both $A$ and
$B$ are positive semidefinite, and therefore so are~$F[ A ]$ and
$F[ B ]$. Hence $F[A]$ and $F[A]^{(1)} = F[A^{(1)}]$ have non-negative
principal minors, since the principal minors of the latter are
included in those of $F[ B ]$. Lemma~\ref{Ltn_fin} now gives the
result.
\end{proof}

It is trivial that the Hadamard (entrywise) power $H^{\circ \alpha}$
is totally non-negative for all~$H \in \HTN_1 \cup \HTN_2$ if and only
if $\alpha \geq 0$.  For higher dimensions, the situation is as
follows.

\begin{theorem}[{\cite[Theorem~5.11 and Example~5.5]{FJS}}]
\label{T_FJS_crit}
Let $\alpha \in \R$ and $N \geq 2$. The power function~$x^\alpha$
preserves $\HTN_N$ if and only if $\alpha$ is a non-negative integer
or $\alpha \geq N - 2$.
\end{theorem}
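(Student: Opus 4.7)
The plan is to combine Lemma~\ref{Ltn_fin} with the FitzGerald--Horn classification~\cite{FitzHorn} of powers preserving $\bp_N((0,\infty))$, together with the rank-two counterexamples from Proposition~\ref{Pjain}.

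For sufficiency, given $H \in \HTN_N$, Lemma~\ref{Ltn_fin} tells us that $H$ and $H^{(1)}$ are positive semidefinite Hankel matrices of orders $N$ and $N - 1$ respectively. If $\alpha$ is a non-negative integer, the Schur product theorem applied iteratively shows that $H^{\circ \alpha}$ and $(H^{\circ \alpha})^{(1)} = (H^{(1)})^{\circ \alpha}$ remain positive semidefinite, and a second application of Lemma~\ref{Ltn_fin} returns $H^{\circ \alpha} \in \HTN_N$. If $\alpha \geq N - 2$ is not an integer, the FitzGerald--Horn theorem gives the positive semidefiniteness of $H^{\circ \alpha}$ (since $\alpha$ is above the critical exponent $N - 2$) and also of $(H^{(1)})^{\circ \alpha}$ (since $\alpha \geq N - 2 > N - 3$, the critical exponent for $(N - 1) \times (N - 1)$ matrices); Lemma~\ref{Ltn_fin} once more closes the argument.

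For necessity, I would first rule out $\alpha < 0$ by applying $F(x) = x^\alpha$ entrywise to the matrix $\begin{pmatrix} a & b \\ b & b \end{pmatrix}$ with $a > b > 0$, which lies in $\HTN_2 \subseteq \HTN_N$. Preservation of total non-negativity forces $F(a) \geq F(b)$, so $F$ must be non-decreasing on $(0, \infty)$, excluding all negative~$\alpha$. For a non-integer $\alpha \in (0, N - 2)$ (necessarily $N \geq 3$), I would invoke Proposition~\ref{Pjain} to obtain a rank-two Hankel matrix $A \in \bp_N^2((0, \infty))$ with $A^{\circ \alpha} \notin \bp_N$. The counterexample takes the form $A = \bone \bone^T + c^2 \bu \bu^T$ for some geometric vector $\bu$, and so is the moment matrix of a positive two-point measure on $(0, \infty)$; thus $A \in \HTN_N$ by Lemma~\ref{Ltn}, while $A^{\circ \alpha} \notin \bp_N \supseteq \HTN_N$ shows that $x^\alpha$ cannot preserve $\HTN_N$.

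The principal obstacle, as I see it, is confirming that the Jain-type counterexample of Proposition~\ref{Pjain} is actually totally non-negative, rather than merely positive semidefinite; recognising it as a finitely supported moment matrix on $(0, \infty)$ and invoking Lemma~\ref{Ltn} dispenses with this concern. Once this is in hand, Lemma~\ref{Ltn_fin} and the FitzGerald--Horn threshold combine to yield the claimed dichotomy.
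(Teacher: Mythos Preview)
Your approach is correct and matches the paper's own treatment. The paper does not give a self-contained proof of this theorem (it is cited from~\cite{FJS}), but the Remark immediately following it supplies exactly your necessity argument: the Hankel matrix $(1 + x^{i+j})$ is the moment matrix of $\delta_1 + \delta_x$ and hence totally non-negative by Lemma~\ref{Ltn}, so Jain's theorem (underlying Proposition~\ref{Pjain}) shows its $\alpha$-power is not positive semidefinite for the bad values of~$\alpha$. Your sufficiency argument via Lemma~\ref{Ltn_fin} and the FitzGerald--Horn threshold is sound and in fact more explicit than anything the paper provides on this point.

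One small correction: the inclusion $\HTN_2 \subseteq \HTN_N$ is false as written when $N > 2$, since these are sets of matrices of different sizes. To rule out $\alpha < 0$ for general~$N$, either extend your example to the $N \times N$ moment matrix of $(a-b)\delta_0 + b\delta_1$ (with entries $s_0 = a$ and $s_k = b$ for $k \geq 1$; the leading $2 \times 2$ minor after applying $x^\alpha$ is $b^\alpha(a^\alpha - b^\alpha) < 0$), or simply observe that Jain's theorem already excludes $\alpha < 0$, since $(1 + x^{i+j})^{\circ\alpha} \in \bp_N$ holds precisely when $\alpha$ is a non-negative integer or $\alpha \geq N - 2$.
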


Thus the set of powers preserving total non-negativity for Hankel matrices
coincides with the set of powers preserving positivity on
$\bp_N([0,\infty))$, as identified by FitzGerald and Horn
\cite{FitzHorn}.

\begin{remark}
We note that Theorem~\ref{T_FJS_crit} follows from a result of
Jain \cite[Theorem~1.1]{Jain}, since for $x \in ( 0, 1 )$, the
semi-infinite Hankel matrix $(1 + x^{i+j})_{i,j = 0}^\infty$ arises as
the moment matrix of the measure $\delta_1 + \delta_x$, and is
therefore totally non-negative, by Lemma~\ref{Ltn}.
\end{remark}

We conclude this section by examining entrywise preservers of total
non-negativity in the general setting, where the matrices are not
assumed to have a Hankel structure, or to be
symmetric or even square. By Theorem~\ref{Ttn}, every such preserver must
be absolutely monotonic on $( 0, \infty )$. However, it is not
immediately clear how to proceed further with non-symmetric matrices;
the analogue of the Schur product theorem no longer holds in this
situation, as noted in \cite[Example~4.3]{FJS}.

Our next result shows that, when working with rectangular or symmetric
matrices, the set of functions preserving total non-negativity is very
rigid.

\begin{theorem}\label{Ptn}
Suppose $F : [0,\infty) \to \R$. The following are equivalent.
\begin{enumerate}
\item Applied entrywise, the function $F$ preserves total non-negativity
on the set of all rectangular matrices of arbitrary size.

\item Applied entrywise, the function $F$ preserves total non-negativity
on the set of all real symmetric matrices of arbitrary size.

\item The function $F$ is constant or linear. In other words, there
exists $c \geq 0$ such that either $F( x ) \equiv c$, or else
$F( x ) = c x$ for all $x \geq 0$.
\end{enumerate}
\end{theorem}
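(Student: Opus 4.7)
My plan is to establish the cycle $(3) \Rightarrow (1) \Rightarrow (2) \Rightarrow (3)$. The first two implications are immediate: a constant map $F(x) \equiv c$ sends every matrix to $c \bone \bone^T$ (of matching shape), whose minors are either $c$ or $0$; a linear map $F(x) = cx$ with $c \geq 0$ sends $M$ to $cM$, which inherits total non-negativity from $M$; and $(1) \Rightarrow (2)$ is immediate, since symmetric matrices are square and hence rectangular.

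The main work is $(2) \Rightarrow (3)$. Since Hankel matrices are symmetric, Theorem~\ref{Ttn} tells us that $F$ agrees on $(0, \infty)$ with an absolutely monotonic entire function; write $F(x) = \sum_{k \geq 0} c_k x^k$ on $(0, \infty)$, with each $c_k \geq 0$. I will then apply $F[-]$ to the rank-one symmetric totally non-negative matrix $M_0 := \bu \bu^T$, where $\bu := ( 2, 1, 2 )^T$. The minor of $F[M_0]$ indexed by rows $\{ 1, 2 \}$ and columns $\{ 2, 3 \}$ is $F(2)^2 - F(1) F(4)$, and expanding using the power series gives
\[
F(2)^2 - F(1) F(4) = -\sum_{0 \leq j < k} c_j c_k ( 2^k - 2^j )^2,
\]
a non-positive quantity. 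Since $F[M_0]$ is totally non-negative, this quantity is also non-negative, so each term of the sum must vanish: $c_j c_k = 0$ whenever $j \neq k$. Therefore $F$ is a monomial on $(0, \infty)$: $F(x) = c x^n$ for some $c \geq 0$ and some non-negative integer $n$.

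The principal remaining obstacle is to rule out exponents $n \geq 2$. For this I plan to exhibit a symmetric totally non-negative matrix $M^\star$ whose $n$th Hadamard power fails to be totally non-negative; such counterexamples exist in sufficient dimension, reflecting the failure of the Schur-product analogue for total non-negativity noted in \cite[Example~4.3]{FJS}. This will force $n \in \{ 0, 1 \}$, so $F$ is constant or linear on $(0, \infty)$. Finally, the value of $F$ at the origin is pinned down by applying $F[-]$ to the identity matrix $I_N$ (whose $\{1,2\} \times \{2,3\}$ minor under $F$ equals $F(0)(F(0) - F(1))$, which must be non-negative) and to the symmetric totally non-negative matrix with successive rows $(2, 1, 0)$, $(1, 1, 1)$, $(0, 1, 2)$; together with the bound $0 \leq F(0) \leq \lim_{\epsilon \to 0^+} F(\epsilon)$ from Theorem~\ref{Ttn}, these constraints force $F(0) = c$ in the constant case and $F(0) = 0$ in the linear case, completing the proof.
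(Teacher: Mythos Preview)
Your argument is correct and takes a genuinely different, more economical route than the paper.

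The paper proves $(2)\Rightarrow(3)$ by first showing continuity of $F$ at~$0$ (via the tridiagonal matrix $\begin{psmallmatrix}2&1&0\\1&2&1\\0&1&2\end{psmallmatrix}$), and then invoking Theorem~\ref{TpolyTN}, whose proof requires the specific $4\times4$ ``almost-Hankel'' matrix~\eqref{Ealmosthankel} of Fallat--Johnson--Sokal together with an adjugate identity (Appendix~\ref{Ap}) to kill all higher coefficients simultaneously. Your rank-one trick with $\bu=(2,1,2)^T$ is much sharper for the present purpose: from $F(2)^2-F(1)F(4)=-\sum_{j<k}c_jc_k(2^k-2^j)^2\geq 0$ you get $c_jc_k=0$ for $j\neq k$ immediately, reducing $F$ to a single monomial $cx^n$ on $(0,\infty)$ without any of the machinery in Theorem~\ref{TpolyTN}. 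What this buys you is a short, transparent proof of Theorem~\ref{Ptn}; what the paper's route buys is the stronger Theorem~\ref{TpolyTN}, which classifies entrywise TN-preservers among \emph{all} real-entire functions (not just absolutely monotonic ones) already in fixed dimension~$4$ (or symmetric~$5\times5$).

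One correction: your citation for the ``principal remaining obstacle'' is off. Example~4.3 of~\cite{FJS} concerns Hadamard products, not powers. The result you need---that for every real $\alpha>1$ there is a symmetric $5\times5$ TN matrix whose $\alpha$th Hadamard power is not TN---is Example~5.10 of~\cite{FJS} (building on the $4\times4$ Example~5.9, which is exactly the matrix~\eqref{Ealmosthankel} used in the paper). With that citation fixed, your step~3 is complete. Your step~4 is fine; in the linear case $F(0)=0$ is already forced by $0\leq F(0)\leq\lim_{\epsilon\to0^+}c\epsilon=0$ from Theorem~\ref{Ttn}, and in the constant case your matrix with rows $(2,1,0),(1,1,1),(0,1,2)$ gives $\det F[-]=-c(c-F(0))^2\geq0$, forcing $F(0)=c$ when $c>0$.
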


Contrast this result, especially hypothesis~(2), with
Theorem~\ref{Ttn}.

We defer the proof of Theorem~\ref{Ptn} until we have more closely
examined the case of entire maps. This will give what is
needed to overcome the main technical difficulty in proving
Theorem~\ref{Ptn}.

Recall from \cite[Section 5]{FJS} that if $A$ is a totally non-negative
matrix which is $3 \times 3$, or symmetric and $4 \times 4$, then the
Hadamard power $A^{\circ \alpha}$ is totally non-negative for all $\alpha
\geq N - 2$, where $N$ is the number of rows of $A$.

For larger matrices, very few entire functions preserve
total non-negativity.

\begin{theorem}\label{TpolyTN}
Let $F( x ) = \sum_{n = 0}^\infty c_n x^n$ be entire
with real coefficients. The entrywise map $F[-]$
preserves total non-negativity for $4 \times 4$ matrices if and only if
$F( x ) \equiv c_0$ with $c_0 \geq 0$, or $F( x ) = c_1 x$
for all $x \geq 0$ with $c_1 \geq 0$. The same conclusion
holds if $F[-]$ preserves total non-negativity for symmetric $5 \times 5$
matrices.
\end{theorem}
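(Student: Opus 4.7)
My plan is to establish the theorem in three steps: non-negativity of all Taylor coefficients $c_n$, a dichotomy between constant $F$ and $c_0 = 0$, and vanishing of all $c_n$ for $n \geq 2$ in the second case. The first two steps are relatively direct; the third, which is the crux, depends on an explicit counterexample drawn from \cite[Section~5]{FJS}.

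For step~1, the rank-one Hankel matrices $b \bu(\epsilon)\bu(\epsilon)^T$ lie in $\HTN_4$ (resp.\ $\HTN_5$), so Proposition~\ref{Phorn-hankel} together with Remark~\ref{Rhankel1} forces the first $4$ (resp.\ $5$) non-zero Taylor coefficients of $F$ to be positive. Applying the same argument to the rescaled functions $F_\lambda(x) := F(\lambda x)$ for varying $\lambda > 0$ (which also preserve total non-negativity, since $\lambda A$ is TN when $A$ is), together with the entirety of $F$, extends non-negativity to every $c_n$. For step~2, the $4 \times 4$ lower unit triangular matrix $A$ with $A_{ij} = 1$ iff $i \geq j$ is totally non-negative. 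Writing $a := F(1)$ and $b := c_0 = F(0)$, the $2 \times 2$ minor of $F[A]$ from rows $\{1,2\}$ and columns $\{2,3\}$ equals $b(b-a)$, while the minor from rows $\{1,2\}$ and columns $\{1,2\}$ equals $a(a-b)$. Non-negativity of both forces $a = b$ (whence $F \equiv c_0$ by step~1) or $b = 0$ (i.e., $c_0 = 0$). The symmetric $5 \times 5$ case is handled by a symmetric TN variant.

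For step~3, suppose for contradiction that $c_0 = 0$ and $c_m > 0$ for a minimal $m \geq 2$. Invoke from \cite[Section~5]{FJS} an explicit $4 \times 4$ totally non-negative matrix $A$ (resp.\ symmetric $5 \times 5$) such that some minor $\Delta$ of the Hadamard power $A^{\circ m}$ is strictly negative. Let $\Delta_t$ denote the same minor of $F[tA]$; by hypothesis, $\Delta_t \geq 0$ for all $t > 0$. Expanding $F[tA] = \sum_{n \geq 1} c_n t^n A^{\circ n}$ and using multilinearity of the determinant, $\Delta_t$ becomes a series in $t$ whose terms are of the form $c_{n_1} \cdots c_{n_k} t^{n_1 + \cdots + n_k}$ times a mixed determinant of Hadamard powers of $A$. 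For a polynomial $F$ of top degree $N \geq m$, the leading $t^{Nk}$-term equals $c_N^k \Delta(A^{\circ N})$, and choosing $A$ so this is negative gives the contradiction for sufficiently large $t$. For genuinely entire $F$, one extends by approximation: the polynomial truncations $F_N(x) := \sum_{n \leq N} c_n x^n$ converge uniformly to $F$ on compact sets, and the contradiction passes to the limit.

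The principal technical obstacle is twofold: locating the explicit counterexample matrix $A$ of \cite[Section~5]{FJS}, and rigorously extending the leading-order sign analysis from the polynomial case to the entire case in step~3. Care is required to confirm that the intermediate mixed determinants, involving positive combinations of $c_1 A$, $c_m A^{\circ m}$, and higher Hadamard powers, do not cancel the negative dominant contribution when $F$ is not a polynomial.
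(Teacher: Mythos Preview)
Your step~3 has a genuine gap. Even granting step~1 (whose rescaling argument does not actually extend positivity beyond the first four nonzero coefficients, since $F(\lambda x)$ has the same pattern of vanishing coefficients as $F$), the large-$t$ argument for polynomials does not transfer to entire~$F$: the truncations $F_N$ do not preserve total non-negativity, so the polynomial case does not apply to them, and the value of~$t$ producing a negative minor for $F_N$ depends on~$N$ and need not stabilise. A $t\to 0^+$ version of your idea \emph{does} work directly for entire~$F$, but only when $c_1=0$: then the lowest-order term of $F[tA]$ is $c_m t^m A^{\circ m}$, and $\det F[tA]\sim c_m^4 t^{4m}\det A^{\circ m}<0$ yields the contradiction. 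This is precisely how the paper shows $c_1\neq0$. The outstanding case is $c_1>0$ together with some $c_m\neq0$ for $m\ge2$: here the $t\to0^+$ leading term is $c_1^4 t^4\det A\ge0$, and no asymptotic in~$t$ (small or large) isolates a negative contribution.

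The paper resolves this remaining case by expanding in a different parameter. With $A(x)=\bone_{4\times4}+xM$ for the specific rank-$3$ matrix~$M$ of \cite[Example~5.9]{FJS}, one has $F[tA(x)]=\sum_{j\ge0}\beta_j(t)\,x^j M^{\circ j}$, where $\beta_j(t)=F^{(j)}(t)/j!$. An explicit algebraic computation (Appendix~\ref{Ap}, exploiting that $M$ has rank~$3$) shows that the lowest nonvanishing term of $\det F[tA(x)]$ in~$x$ is $-57168\,\beta_0(t)\,\beta_1(t)^2\,\beta_2(t)\,x^4$. Taking $x\to0^+$ for each fixed~$t$ and using that the $\beta_j$ are entire forces $\beta_2\equiv0$, i.e.\ $F''\equiv0$, directly for any entire~$F$; no approximation and no a~priori non-negativity of all $c_n$ are needed.
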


\begin{proof}
First we consider the $4 \times 4$ case. Note that
one implication is immediate, so suppose $F[-]$ preserves total
non-negativity and is not constant. Let
$A_y := y \, \Id_3 \oplus \mathbf{0}_{1 \times 1}$, where $y \geq 0$
and $\Id_k$ denotes the $k \times k$ identity matrix for $k \geq 1$.
Observing that $F[A_y]$ is totally non-negative, it follows that
$F(y) \geq F(0) \geq 0$ for all $y \geq 0$. If, moreover, $y > 0$ is
such that $F( y ) > F( 0 )$, then from the same observation we
conclude that $F( 0 ) = c_0 = 0$.

Next, suppose for contradiction that
$F( x ) = c_m x^m + O( x^{m + 1} )$, where $m > 1$
and~$c_m \neq 0$. We
make use of the matrix studied in \cite[Example~5.9]{FJS},
\begin{equation}\label{Ealmosthankel}
M := \begin{pmatrix}
3 & 6 & 14 & 36 \\
6 & 14 & 36 & 98 \\
14 & 36 & 98 & 276 \\
36 & 98 & 284 & 842
\end{pmatrix},
\end{equation}
and let $A( x ) := \bone_{4 \times 4} + x M$. By the analysis in
\cite[Example 5.9]{FJS}, the matrix $A( x )$ is totally non-negative
for all $x \geq 0$, while for every real $\alpha > 1$ there exists
$\delta_\alpha > 0$ such that
\[
\det A( x )^{\circ \alpha} < 0 \qquad
\text{for all } x \in ( 0, \delta_\alpha ).
\]
Fix $z \in (0, \delta_m )$, let $t > 0$, and note that
\[
F[ t A( z ) ] = c_m t^m A( z )^{\circ m} + t^{m + 1} C( t, z )
\]
for some $4 \times 4$ matrix $C( t, z )$. Since the matrix on the
left-hand side is totally non-negative, it follows that
\[
0 \leq t^{-4 m} \det F[ t A( z ) ] =
c_m^4 \det A( z )^{\circ m} + O( t ).
\]
Letting  $t \to 0^+$ gives a contradiction. Hence
$c_1 \neq 0$.

Finally, note that
\[
F[ t\, A( x ) ] =
\sum_{n = 1}^\infty c_n t^n ( \bone_{4 \times 4} + x M )^{\circ n} =
\sum_{n = 1}^\infty c_n t^n \sum_{j = 0}^n \binom{n}{j} x^j M^{\circ j} =
\sum_{j = 0}^\infty \beta_j(t) x^j M^{\circ j}, 
\]
where $t \geq 0$ and $\beta_j(t) := \sum_{n = j}^\infty c_n
\binom{n}{j} t^n$. Using a Laplace expansion, it is not hard to see that
\[
\det F[ t\, A(x) ] = \det M_4(t) + O( x^5 ), \quad \text{where} \quad
M_4(t) := \sum_{j = 0}^4 \beta_j(t) x^j M^{\circ j}.
\]
If $R$ is a commutative unital ring containing $x$ and
$\alpha_1$, \ldots, $\alpha_4$ then Appendix~\ref{Ap} gives that
\begin{equation}\label{Emiracle}
\det M_4 =
-57168 \, \alpha_0 \, \alpha_1^2 \, \alpha_2 \, x^4 + O( x^5 ), \qquad
\text{where } M_4 := \sum_{j = 0}^4 \alpha_j x^j M^{\circ j}.
\end{equation}
Taking $R = \R[ t, x ]$
and $\alpha_j = \beta_j(t)$, we have that $M_4$ equals $M_4(t)$.
Since $F[t A(x) ]$ is totally non-negative for all $x \geq 0$,
dividing through by $x^4$ and letting $x \to 0^+$, it follows that
$\beta_0(t) \beta_1(t)^2 \beta_2(t)$ vanishes on an
interval. Since $\beta_j(t) = F^{(j)}(t) / j!$, each $\beta_j$ is also
entire; thus at least one $\beta_j \equiv 0$, whence
$\beta_2(t) \equiv 0$. It follows that $c_n = 0$ for all $n \geq 2$,
as claimed. That $c_1 \geq 0$ now follows by considering $F[\Id_4]$.

This concludes the proof for $4 \times 4$ totally non-negative
matrices. The proof for symmetric $5 \times 5$ matrices now follows,
as \cite[Example~5.10]{FJS} gives a $5 \times 5$ symmetric totally
non-negative matrix containing the matrix $A( x )$ as a $4 \times 4$
minor.
\end{proof}

With this result in hand, we can now complete the outstanding proof in
this section.

\begin{proof}[Proof of Theorem~\ref{Ptn}]
Clearly $(3) \implies (1) \implies (2)$.
Suppose $(2)$ holds. Then, by Theorem~\ref{Ttn}, the function $F$ is
absolutely monotonic on $( 0, \infty )$, and $F( 0 ) \geq 0$.
If $F$ is not constant, then $F( y ) > F( 0 )$ for some $y > 0$. As
$F[ y \, \Id_3 ]$ is totally non-negative, looking at $2 \times 2$
minors now shows that $F( 0 ) = 0$.

To see that $F$ is continuous at $0$, note first that
\[
A :=
\begin{pmatrix} 2 & 1 & 0 \\ 1 & 2 & 1 \\ 0 & 1 & 2 \end{pmatrix}
\]
is totally non-negative. If
$L := \lim_{x \to 0^+} F( x )$, then
$\det F[ t A] \to -L^3 \geq 0$ as $t \to 0^+$, whence
$L = 0$, as desired.

Thus $F$ has the form required to apply Theorem~\ref{TpolyTN}, so
$F( x ) = c_1 x$ for all $x \in [ 0, \infty )$, as required.
\end{proof}

\section{Moment transformers on $[-1,1]$}\label{Smomentsm11}

Equipped with the one-sided result from Theorem \ref{T1sided-general},
we now classify the functions which map the set $\moment([-1,1])$ into
$\moment(\R)$ when applied entrywise. The goal of this section is to
prove the following strengthening of Theorem~\ref{T2sided}, in the
spirit of Theorem~\ref{T1sided-general}.

\begin{theorem}\label{T2sided-general}
Let $F : ( -\rho, \rho ) \to \R$, where $0 < \rho \leq \infty$. The
following are equivalent.
\begin{enumerate}
\item $F[-]$ maps the sequences
$\bigcup_{u \in (0,1)} \momentr(\{ -1, u, 1 \})$ into
$\moment(\R)$.

\item $F[-]$ maps the sequences
$\bigcup_{u \in (0,1)} \momentr(\{ -1, u, 1 \})$ into
$\moment([-1,1])$.

\item $F[-]$ maps $\momentr([-1,1])$ into $\moment(\R)$.

\item $F$ is the restriction to $(-\rho,\rho)$ of an absolutely
monotonic entire function.
\end{enumerate}
\end{theorem}

Recall that Schoenberg and Rudin's result, Theorem~\ref{TSchoenberg},
characterizes positivity preservers for matrices with entries
in~$(-1,1)$. As a consequence of Theorem~\ref{T2sided-general}, we
obtain the following generalization of
Theorem~\ref{TSchoenberg} with a much reduced test set.

\begin{corollary}
The hypotheses of Theorem \ref{TSchoenberg} are equivalent to $F[-]$
preserving positivity on Hankel matrices arising from moment
sequences, with entries in~$( -1, 1 )$ and rank at most
$3$. Furthemore, this theorem holds with $(-1,1)$ and $(0,1)$ replaced
by $(-\rho,\rho)$ and $(0,\rho)$, respectively, whenever
$\rho \in (0,\infty]$.
\end{corollary}

The proof of Theorem~\ref{T2sided-general} requires new ideas, as
previous techniques to prove analogous results are not amenable to the
more general Hankel setting; see Remark~\ref{Rembed}.

As a first step, we obtain the following lemma; together with
Theorem~\ref{Thamburger}, it explains why assertion~(1) in
Theorem~\ref{T2sided-general} can be relaxed to assertion~(2).

Recall the notion of truncated moment sequence from
Definition~\ref{Dmass}.

\begin{lemma}\label{Lbounded}
If $F : (-\rho,\rho) \to \R$ maps entrywise the sequences
$\momentr_2(\{ -1, 1 \})$ into $\moment_2(\R)$, then $F$ is
locally bounded. If $F$ is known to be locally bounded on
$(0,\rho)$, then the set $\momentr_2(\{ -1, 1 \})$
may be replaced by~$\momentr_2(\{ -1 \})$.
\end{lemma}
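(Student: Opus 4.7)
The plan is straightforward: parameterize the two hypothesis sets of truncated moment sequences, apply $F$ entrywise, and translate the necessary positivity of the image $2 \times 2$ Hankel matrix into a boundedness statement for~$F$.

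For the first part, I would write $\mu = c_- \delta_{-1} + c_+ \delta_1$ with $c_\pm \geq 0$ to see that $\moment_2(\{-1,1\})$ consists of exactly the triples of the form $(t, s, t)$ with $t \geq 0$ and $|s| \leq t$ (setting $t = c_- + c_+$ and $s = c_+ - c_-$). Any triple lying in $\moment_2(\R)$ must in particular have positive semidefinite $2 \times 2$ Hankel matrix, so applying this to the image $(F(t), F(s), F(t))$ yields $F(t) \geq 0$ and $F(t)^2 \geq F(s)^2$, hence $|F(s)| \leq F(t)$ for every $t \geq 0$ and every $s \in [-t, t]$. Thus $F$ is bounded on $[-t, t]$ by the real number $F(t)$, which is in fact stronger than local boundedness on $\R$.

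For the second part, the family $\moment_2(\{-1\})$ reduces, by taking $\mu = a \delta_{-1}$, to the one-parameter set $\{(a, -a, a) : a \geq 0\}$, and the same positivity condition now yields only the weaker inequality $|F(-a)| \leq F(a)$ together with $F(a) \geq 0$. Under the supplementary assumption that $F$ is locally bounded on $(0, \infty)$, any local bound for $F$ near a point $a > 0$ transfers by the reflection $y \mapsto -y$ to the same bound near $-a$, so $F$ is locally bounded on $(-\infty, 0)$; combined with the hypothesis on $(0, \infty)$ and the automatic finiteness of $F(0)$, this yields local boundedness on all of $\R$.

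I do not anticipate any serious obstacle. The only mild point to watch is the behavior near the origin in the second part: the reflection argument alone handles points of $(-\infty, 0)$, and one must combine it with the input hypothesis on $(0, \infty)$ interpreted as controlling a one-sided neighborhood of~$0$, together with the trivial fact that $F(0)$ is a real number.
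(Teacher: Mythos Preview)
Your argument is correct and essentially the same as the paper's: both extract the key inequalities $F(t)\geq 0$ and $|F(s)|\leq F(t)$ for $|s|\leq t$ (respectively $|F(-a)|\leq F(a)$) from positivity of the $2\times 2$ Hankel matrix of $F$ applied to the relevant moment triples. The paper phrases the first part by first observing that $F$ is non-decreasing on $(0,\infty)$ and then bounding $|F(-a)|$ by $F(a)$, whereas you go directly to the bound on $[-t,t]$, but the content is the same.
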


\begin{proof}
Akin to the proof of Theorem~\ref{Thorn-hankel}, the
assumption implies that~$F$ is non-decreasing, whence
locally bounded, on~$(0,\rho)$. Now let $\mu = a \delta_{-1}$
for any $a \in (0,\rho)$. By considering the leading principal
$2 \times 2$ submatrix of~$F[H_\mu]$, where $H_\mu$ is the
Hankel matrix~(\ref{Ehankel}) associated to the measure~$\mu$,
it follows that $|F(-a)| \leq F(a)$.
\end{proof}

The next step is to use assertion~(2) in Theorem~\ref{T2sided-general}
to establish the continuity of~$F$ on $(-\rho,\rho)$.

\begin{proposition}\label{Pcont}
Fix $v_0 \in (0,1)$ and suppose the function $F: (-\rho,\rho) \to \R$,
where $0 < \rho \leq \infty$, maps entrywise
\[
\momentr_2(\{ -1, 1 \}) \cup \momentr_3(\{ -1, v_0 \}) \cup
\bigcup_{u \in (0,1)} \momentr_4(\{ 1, u \})
\]
into
$\moment_2([-1,1]) \cup \moment_3([-1,1]) \cup \moment_4([-1,1])$.
Then~$F$ is continuous on~$(-\rho,\rho)$.
\end{proposition}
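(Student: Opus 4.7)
The plan is to use each of the three families of two-point measures in the hypothesis to establish continuity of $F$ on a distinct region of $\R$: the positive-argument family $\moment_4(\{1, u\})$ yields continuity on $(0, \infty)$, while the family $\moment_3(\{-1, v_0\})$, in two different parametrizations, yields continuity at $0$ and on $(-\infty, 0)$. First I would apply the hypothesis to the one-point measures $\beta \delta_u$ for $\beta > 0$ and $u \in (0, 1)$, whose moment $5$-tuples lie in $\moment_4(\{1, u\})$. The image being a truncated moment sequence on $[-1, 1]$ yields both positive semidefiniteness of the Hankel matrix $(F(\beta u^{i+j}))_{i,j=0}^2$ and the termwise bound $|F(\beta u^i)| \leq F(\beta)$ (since moments on $[-1,1]$ satisfy $|t_i| \leq t_0$). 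The latter shows that $F$ is non-negative and non-decreasing on $(0, \infty)$, while the former gives multiplicative midpoint convexity, $F(x) F(y) \geq F(\sqrt{xy})^2$. Either $F \equiv 0$ on $(0, \infty)$ (and continuity there is trivial), or $F > 0$ throughout; then $g(t) := \log F(e^t)$ is both monotone and midpoint convex, hence measurable, hence continuous by Sierpi\'nski's theorem.

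Next, for each $b \leq 0$, I would consider the two families
\[
\mu_b^+(d) := (d v_0^3 - b) \delta_{-1} + d \delta_{v_0}, \qquad
\mu_b^-(d) := (d v_0 - b) \delta_{-1} + d \delta_{v_0}
\]
in $\meas^+(\{-1, v_0\})$, tuned so that $s_3(\mu_b^+) = b$ and $s_1(\mu_b^-) = b$. In both cases the remaining first three moments satisfy $s_0, s_2 \to -b$ as $d \to 0^+$; moreover $s_1(\mu_b^+(d)) = b + d v_0(1 - v_0^2)$ tends to $b$ strictly from above, while $s_3(\mu_b^-(d)) = b + d v_0(v_0^2 - 1)$ tends to $b$ strictly from below. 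Along any subsequence $d_n \to 0^+$ for which $F(s_1(\mu_b^+(d_n)))$ (respectively $F(s_3(\mu_b^-(d_n)))$) converges to some $L$---which exists by boundedness, since $|F(s)| \leq F(|s|)$ for $s \neq 0$---the image converges in $\R^4$ to $(F(-b), L, F(-b), F(b))$ (respectively $(F(-b), F(b), F(-b), L)$). By weak* compactness of positive measures on $[-1, 1]$ with uniformly bounded total mass, the cone $\moment_3([-1, 1])$ is closed in $\R^4$, so the limit is represented by a positive measure $\sigma$ on $[-1, 1]$. Since $\int \std \sigma = \int x^2 \std \sigma = F(-b)$, the measure $\sigma$ is supported on $\{-1, 1\}$, on which $x^3 = x$; this forces $L = F(b)$. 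As every subsequential limit agrees with $F(b)$, the full one-sided limit at $b$ exists and equals $F(b)$, so $F$ is continuous at $b$.

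The case $b = 0$ additionally gives right-continuity at $0$: the $\mu_0^+$ construction produces the limit image $(L_+, L_+, L_+, F(0))$, where $L_+ := \lim_{x \to 0^+} F(x)$; then $\int (1 - x) \std \sigma = L_+ - L_+ = 0$ together with $1 - x \geq 0$ on $[-1, 1]$ forces $\sigma = L_+ \delta_1$, hence $F(0) = \int x^3 \std \sigma = L_+$. Combining all three conclusions yields continuity of $F$ on $\R$. The main technical hurdle is the careful extraction of weak* subsequential limits: one must verify closedness of $\moment_3([-1, 1])$ in $\R^4$, and then argue that every subsubsequential limit of $F(s_1(\mu_b^+(d_n)))$ or $F(s_3(\mu_b^-(d_n)))$ is pinned to $F(b)$, so that the full one-sided limit at $b$ exists and equals $F(b)$.
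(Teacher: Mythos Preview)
Your proof is correct and follows essentially the same approach as the paper: the same test measures $a\delta_{-1} + d\delta_{v_0}$ are used (your $\mu_b^\pm$ coincide exactly with the paper's choices $a = \beta + d v_0$ and $a = \beta + d v_0^3$), and continuity on $(0,\infty)$ is obtained by the same multiplicative-midpoint-convexity plus monotonicity argument (the paper packages this as an appeal to Theorem~\ref{Treformulation}). The only cosmetic difference is in the final extraction: where you pass to a weak$^*$ subsequential limit and argue that the limiting measure must be supported on $\{-1,1\}$ (so that $\int x\,\std\sigma = \int x^3\,\std\sigma$), the paper instead integrates the image measure against $p_\pm(t) = (1\pm t)(1-t^2)$ to obtain the sandwich bound $|F(s_1) - F(s_3)| \leq F(s_0) - F(s_2)$ directly, which is the same observation without invoking compactness.
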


\begin{proof}
As $F$ maps $\momentr_2(\{ -1, 1 \})$ into $\moment_2([-1,1])$,
considering
\[
\mu = \frac{a + b}{2} \delta_1 + \frac{a - b}{2} \delta_{-1}
\quad \text{and} \quad \nu = b \delta_1
\]
shows that $F( a ) \geq F( b ) \geq 0$ whenever
$0 \leq a \leq b < \rho$. It follows immediately that $F$ maps
$\momentr_2(\{ 0, 1 \})$ into $\moment_2(\R)$. Then, by
Theorem~\ref{Treformulation} for~$N=3$ and our assumptions, $F$ is
continuous, non-negative, and non-decreasing on $( 0, \rho )$.
In particular, $F$ has a right-hand limit at $0$, and
\begin{equation}\label{Eplaceholder}
0 \leq F(0) \leq \lim_{\epsilon \to 0^+} F(\epsilon).
\end{equation}

We now fix $v_0 \in (0,1)$ and use the truncated moment sequences in
$\momentr_3(\{ -1, v_0 \})$ to prove two-sided continuity of $F$ at
all points in $(-\rho,0]$. Fix $\beta \in [0,\rho)$, and for $b$ such
that $0 < b < ( \rho - \beta ) / ( 1 + v_0 )$, let
\[
a := \beta + b v_0 \qquad \text{and} \qquad
\mu = a \delta_{-1} + b \delta_{v_0}.
\]
By assumption, we have that
$F[-] : \momentr_3(\{ -1, v_0 \}) \to \moment_3([-1,1])$, so
there exists $\sigma \in \meas^+[-1,1]$ such that
\[
(F(s_0(\mu)), \dots, F(s_3(\mu))) = (s_0(\sigma), \dots, s_3(\sigma)).
\]
If the polynomials $p_\pm(t) := (1 \pm t)(1-t^2)$ then,
\[
\int_{-1}^1 p_{\pm}(t) \std\sigma \geq 0,
\]
since $p_\pm(t)$ are non-negative on $[-1,1]$. Hence (\ref{Etrick})
gives that
\[
F(a+b) - F(a+bv_0^2) \geq
\pm\left( F(-a + bv_0 ) - F(-a + bv_0^3) \right), 
\]
or, equivalently, 
\[
F(\beta+b+b v_0) - F(\beta + b v_0 + bv_0^2) \geq \left|
F(-\beta) - F(-\beta - b (v_0 - v_0^3)) \right|.
\]
Letting $b \to 0^+$ and using the continuity of $F$ on~$(0,\rho)$, we
conclude that $F$ is left continuous at~$-\beta$.  We proceed
similarly to show right continuity of~$F$ at~$-\beta$; let
\[
a := \beta + b v_0^3 \quad \text{and} \quad
\mu = a \delta_{-1} + b \delta_{v_0},
\]
where $b$ is such that $0 < b < ( \rho - \beta ) / ( 1 + v_0^3 )$,
and take $b \to 0^+$ as before.
\end{proof}

\begin{remark}
The integration trick~(\ref{Etrick}) used in the proof of
Proposition~\ref{Pcont} shows that certain linear combinations of
moments are non-negative. The integral it employs may also be
expressed using the quadratic form given by the Hankel moment matrix
for the ambient measure. To see this, suppose $\sigma$ is a
non-negative measure on $[ -1, 1 ]$ with moments of all orders, and
let $H_\sigma := ( s_{j+k}( \sigma ) )_{j,k \geq 0}$ be the associated
Hankel moment matrix. If $f : [ -1, 1 ] \to \R_+$ is continuous then
so its radical $\sqrt{f} : [ -1, 1 ] \to \R_+$, and the latter can be
uniformly approximated on $[ -1, 1 ]$ by a sequence of polynomials
$p_n( t) = \sum_{j = 0}^{d_n} c_{n, j} t^j$.

Thus
\[
\int_{-1}^1 f \std\sigma =
\lim_{n \to \infty} \int_{-1}^1 p_n(t)^2 \std\sigma =
\lim_{n \to \infty} \sum_{j, k \geq 0} c_{n, j} c_{n, k}
\int_{-1}^1 t^{j + k} \std\sigma =
\lim_{n \to \infty} \bv_n^T H_\sigma \bv_n,
\]
where 
\[
\bv_n := ( c_{n,0}, c_{n,1}, \ldots, c_{n,d_n}, 0, 0, \dots)^T
\qquad (n \geq 1).
\]
Now, since the matrix $H_\sigma$ is positive, the limit on the
right-hand side is non-negative and so
$\int_{-1}^1 f \std\sigma \geq 0$.
\end{remark}

With continuity in hand, we can now complete the proof of
Theorem~\ref{T2sided-general}.

\begin{proof}[Proof of Theorem~\ref{T2sided-general}]
Clearly $(4) \implies (3) \implies (1)$ and $(2) \implies (1)$; that
$(1) \implies (2)$ follows from the remarks preceding
Lemma~\ref{Lbounded}. Now suppose $(1)$ holds. By
Proposition~\ref{Pcont}, the function $F$ is continuous on
$(-\rho,\rho)$, so Theorem~\ref{T1sided-general} gives that~$F$ agrees
on~$(0,\rho)$ with a power series~$\widetilde{F}$ having non-negative
Maclaurin coefficients, which is convergent on the disc $D(0,\rho)$.

Let $\mu := a \delta_{-1} + e^x \delta_{e^{-h}}$, where
$a \in (0,\rho)$, $x < \log(\rho - a)$, and $h>0$, and let the
polynomial $p_{\pm,n}(t) := (1 \pm t)(1-t^2)^n$. Then $p_{\pm,n}(t)$
is non-negative for all $t \in [-1,1]$ and all $n \geq 0$.
Applying~(\ref{Etrick}) gives that
\begin{equation}\label{Ediff}
\left| \sum_{k=0}^n \binom{n}{k} (-1)^k F(a + e^{x - 2kh}) \right|
\geq \left| \sum_{k=0}^n \binom{n}{k} (-1)^{n-k} 
F(-a + e^{x - (2k+1)h}) \right|.
\end{equation}
Let $H_{\pm,a}(x) := F(\pm a + e^x)$ and suppose $F$ is
smooth; dividing~(\ref{Ediff}) by $h^n$ and taking $h \to 0^+$, we
see that
\[
|H_{+,a}^{(n)}(x)| \geq |H_{-,a}^{(n)}(x)|.
\]
Since $H_{+,a}$ is real analytic, we conclude that the Taylor series
for~$H_{-,a}$ has a positive radius of convergence everywhere, so
$H_{-,a}$ is real analytic on~$(-\infty, \log( \rho - a ))$. The
change of variable $x = \log( y + a )$ has a convergent power-series
expansion for~$|y| < a$. It follows that $y \mapsto F( y )$ is real
analytic on~$(-\rho,\rho)$, hence is the restriction
of~$\widetilde{F}$.

When $F$ is not necessarily smooth, we may use a mollifier
argument. Fix $0 < \rho' < \rho$ and let
$G := F|_{(-\rho', \rho')}$. For any $\delta \in (0, \rho - \rho')$,
choose $g_\delta \in C^\infty(\R)$ such that $g_\delta$ is non-negative,
supported on $(0,\delta)$, and integrates to~$1$, and let
\[
F_\delta(x) := \int_0^\delta G(x + t) g_\delta(t) \std t
\qquad \text{for all } x \in (-\rho', \rho').
\]
As the function $x \mapsto G( t + x )$ satisfies hypothesis~(1) of the
theorem with $\rho$ replaced by~$\rho'$, so does the smooth function
$F_\delta$; let $\widetilde{F}_\delta$ be an analytic function on the
disc $D(0, \rho')$ which is absolutely monotonic on~$( 0, \rho' )$ and
agrees on $( -\rho', \rho' )$ with $F_\delta$. Since
\[
| F( x ) - F_\delta( x )| = 
\left|\int_0^\delta ( G(x) - G(x+t) ) g_\delta(t) \std t\right|
\leq \sup_{0 \leq t \leq \delta} |G(x) - G(x+t)|
\]
for all $x \in (-\rho', \rho')$, it follows that $F_\delta$ converges
to~$F$ locally uniformly on $(-\rho',\rho')$ as~$\delta \to 0^+$. The
function $\widetilde{F_\delta}$ is absolutely monotonic, so $|
\widetilde{F_\delta}( z ) | \leq \widetilde{F_\delta}( a )$ whenever
$| z | \leq a < \rho'$, and $\widetilde{F_\delta}( a ) \to F( a )$ as
$\delta \to 0^+$. Hence $\{ \widetilde{F_\delta}(z) : \delta > 0 \}$
is uniformly bounded on $\overline{D}(0,a)$, and therefore forms a
normal family. Thus for some sequence $\delta_n \to 0^+$, the
functions $\widetilde{F_{\delta_n}}$ converge locally uniformly to a
function $\widetilde{F}$ that is analytic on $D(0,\rho')$, and $F$
agrees with $\widetilde{F}$ on~$(-\rho',\rho')$. As this argument
holds for all $\rho' \in (0,\rho)$, the proof is complete.
\end{proof}

\begin{remark}\label{R2sided}
The proof of Theorem~\ref{T2sided-general} requires measures whose
support contains the point $1$, in order to be able to employ the
mollifier argument to move from continuous to smooth functions.
\end{remark}

\begin{remark}\label{Rembed}
Recall that Rudin \cite{Rudin59} showed that~$F$ must be analytic on
$(-1,1)$ and absolutely monotonic on $(0,1)$ if~$F[-]$
preserves positivity for the two-parameter family of Toeplitz matrices
defined in~(\ref{Etoeplitz}). A natural strategy to prove
Theorem~\ref{T2sided-general} would be to show that there exists
$\theta \in \R$ with $\theta / \pi$ irrational, such that the matrices
$(\cos ((i-j) \theta))_{i,j=1}^n$ can be embedded into positive Hankel
matrices, for all sufficiently large~$n$. However, this is not
possible: given $0 < m_1 < m_2$ such that $\cos (m_1 \theta) < 0$ and
$\cos (m_2 \theta) < 0$, if there were a measure
$\mu \in \meas^+([-1,1])$ such that
$\cos( m_j \theta ) = s_{k_j}(\mu)$ for $j=1$ and $j=2$, then, by the
Toeplitz property, $k_1$, $k_2$, and $k_1 + k_2$ must all be odd,
which is impossible.
\end{remark}

\section{Moment transformers on $[-1,0]$}\label{Smomentsm10}

We now study the structure of endomorphisms of $\moment([-1,0])$. The
following result strengthens Theorem~\ref{Tminus} and reveals that
such functions may be discontinuous at the origin, in contrast to
Theorem~\ref{T2sided-general}.

\begin{theorem}\label{Tminus-odd}
Given $u_0 \in (0,1)$ and $F : ( -\rho, \rho ) \to \R$, where
$0 < \rho \leq \infty$, the following are equivalent.
\begin{enumerate}
\item $F[-]$ maps $\momentr( \{ -1, -u_0 \})$ into
  $\moment((-\infty,0])$ and
\[
\momentr_4(\{ -1, 0 \}) \cup
\bigcup_{u \in (0,1)} \momentr_4(\{ -u \})
\]
into $\moment_4((-\infty,0])$.

\item $F[-]$ maps $\momentr([-1,0])$ into $\moment([-1,0])$.

\item There exists an absolutely monotonic entire function
$\widetilde{F}$ such that
\[
F(x) = \begin{cases}
\widetilde{F}(x) & \text{if } x \in (0,\rho),\\
0 & \text{if } x=0,\\
-\widetilde{F}(-x) \qquad & \text{if } x \in (-\rho,0).
\end{cases}
\]
In particular, the function $F$ is odd, but may be discontinuous
at~$0$.
\end{enumerate}
\end{theorem}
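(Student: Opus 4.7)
The plan is to prove $(3) \Rightarrow (2) \Rightarrow (1) \Rightarrow (3)$. Implication $(2) \Rightarrow (1)$ is immediate since the test measures in~(1) all lie in $\meas^+([-1,0])$. For $(3) \Rightarrow (2)$, given $\mu \in \meas^+([-1,0])$ let $\mu^- \in \meas^+([0,1])$ denote its pushforward under $x \mapsto -x$, so that $s_k(\mu) = (-1)^k s_k(\mu^-)$. Theorem~\ref{T1sided} applied to $\widetilde{F}$ yields $\tau^- \in \meas^+([0,1])$ with $\widetilde{F}(s_k(\mu^-)) = s_k(\tau^-)$, and the piecewise definition of~$F$ together with $F(0) = 0$ then gives $F(s_k(\mu)) = (-1)^k s_k(\tau^-) = s_k(\tau)$, where $\tau \in \meas^+([-1,0])$ is the pushforward of~$\tau^-$.

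The substantive direction is $(1) \Rightarrow (3)$. First I would establish that $F$ is odd on $\R \setminus \{0\}$ with $F(0) = 0$. Applying hypothesis~(1) to the degenerate measure $\mu = a\delta_{-1}$ (permitted because $\{-1\} \subset \{-1, -u_0\}$), whose moments are $(-1)^k a$, forces the alternating sequence $(F(a), F(-a), F(a), F(-a), \ldots)$ to be the moment sequence of some $\sigma \in \meas^+((-\infty, 0])$. Passing to the pushforward $\sigma^- \in \meas^+([0, \infty))$, Theorem~\ref{Thamburger}(1) requires both the Hankel matrix $H_{\sigma^-}$ and its truncation $H_{\sigma^-}^{(1)}$ to be positive semidefinite. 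The $2 \times 2$ leading principal minors of these two matrices yield, respectively, $F(a) \geq 0$ with $F(a)^2 \geq F(-a)^2$, and $-F(-a) \geq 0$ with $F(-a)^2 \geq F(a)^2$; together these force $F(-a) = -F(a)$. The value $F(0) = 0$ then follows by taking $\mu \equiv 0$ in hypothesis~(1): the resulting constant sequence $(F(0), F(0), \dots)$ must be the moment sequence of some $\sigma \in \meas^+((-\infty, 0])$, and since $s_0(\sigma) \geq 0 \geq s_1(\sigma)$, this demands $F(0) = 0$.

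With oddness and $F(0) = 0$ in hand, the identity $F((-1)^k x) = (-1)^k F(x)$ for $x \geq 0$ lets me translate hypothesis~(1) into positive-axis statements. Concretely, pushing forward the image of $\mu = a\delta_{-1} + b\delta_{-u_0}$ shows that $F$ maps $\moment(\{1, u_0\})$ into $\moment([0,\infty))$; analogous manipulations applied to the $\moment_4(\{-1, 0\})$ and $\moment_4(\{-u\})$ hypotheses give that $F$ maps $\moment_4(\{0, 1\})$ and each $\moment_4(\{u\})$ (for $u \in (0, 1)$) into $\moment_4([0, \infty))$. Truncating suitably, all hypotheses of Theorem~\ref{Treformulation} for $N = 3$ are met, yielding the continuity of~$F$ on $(0, \infty)$. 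The continuous form of Theorem~\ref{T1sided-general} now applies and gives that~$F$ agrees on $(0, \infty)$ with an absolutely monotonic entire function~$\widetilde{F}$. Combined with oddness and $F(0) = 0$, the piecewise form in~(3) follows.

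I expect the main obstacle to be the extraction of oddness, since hypothesis~(1) interleaves the values of~$F$ on positive and negative inputs and does not obviously force the clean symmetry $F(-x) = -F(x)$. The crucial point is that testing with the single point mass $\mu = a\delta_{-1}$ and using \emph{both} positivity conditions in Theorem~\ref{Thamburger}(1) (ordinary and shifted Hankel, which encode the support constraint on $[0, \infty)$) is exactly what is needed; the ordinary Hankel alone only gives $|F(-a)| \leq |F(a)|$ and is insufficient. Once oddness is secured, the remainder is a clean reduction to the positive-axis results of Section~\ref{Smoments01}.
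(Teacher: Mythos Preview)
Your proof is correct but takes a different route from the paper's.

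The paper proves $(1)\Rightarrow(3)$ by first showing $F(0)=0$, then extracting absolute monotonicity of $F$ on $(0,\infty)$ via \emph{even-index embeddings}: by looking only at even rows and columns of the Hankel matrices in hypothesis~(1), one obtains
\[
\moment_2(\{u^2\})\hookrightarrow\moment_4(\{-u\}),\quad
\moment_2(\{0,1\})\hookrightarrow\moment_4(\{-1,0\}),\quad
\moment(\{1,u_0^2\})\hookrightarrow\moment(\{-1,-u_0\}),
\]
and then invokes Theorem~\ref{Treformulation} directly. Only afterwards does the paper establish oddness, using the integration trick~(\ref{Etrick}) with the polynomials $p_n(t)=(-t)^n(1+t)$ on $[-1,0]$; this requires first checking local boundedness so that the image measure is known to live in $[-1,0]$.

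Your approach inverts the order: you prove oddness first, using the Stieltjes characterization (positivity of both the Hankel matrix and its shifted version~$H^{(1)}$ for the reflected measure on $[0,\infty)$), and then exploit oddness to reflect every hypothesis in~(1) onto the positive axis, reducing wholesale to Theorems~\ref{Treformulation} and~\ref{T1sided-general}. Your extraction of oddness is arguably slicker---it uses exactly the support constraint encoded in hypothesis~(1), bypassing both the integration trick and the local-boundedness step---and once oddness is in hand the rest is a transparent reflection. The paper's even-index trick, on the other hand, gets to absolute monotonicity without needing any information about~$F$ on the negative axis, which is conceptually tidy in a different way. One small point you leave implicit: to feed $\moment(\{0,1\})$ into Theorem~\ref{T1sided-general} you need the observation (made in the paper's proof of that theorem) that $\moment_2(\{0,1\})\to\moment_2(\R)$ already forces $\moment(\{0,1\})\to\moment(\R)$.
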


\begin{proof}

To show that $(3) \implies (2)$, note first that if
$\mu \in \meas^+([-1,0])$, so that $\mu = a \delta_0$ for some $a$,
then $F[H_\mu] = H_{F(a)\delta_0}$, so we may assume $\mu$ is not of
this form, whence the Hankel matrix $H_\mu$ has no zero entries, and
the moment sequence alternates in sign and is uniformly bounded, by
Theorem~\ref{Thamburger}. In particular,
\[
F(s_{2k}(\mu)) = \widetilde{F}(s_{2k}(\mu)) \quad \text{and} \quad
F(s_{2k+1}(\mu)) = -\widetilde{F}(-s_{2k+1}(\mu)) \qquad
( k \geq 0 ).
\]
Recalling the form of the Hankel matrix $H_{\delta_{-1}}$, it follows
that
\begin{equation}\label{Ecomposite}
F[H_\mu] = H_{\delta_{-1}} \circ 
\widetilde{F}[ H_{\delta_{-1}} \circ H_\mu]
\end{equation}
where $\circ$ denotes the entrywise matrix product. This shows (2)
because $F[-]$ is the composite of two operations: the map
$\widetilde{F}[-]$, which sends $\momentr([0,1])$ into
$\moment([0,1])$, by Theorem~\ref{T1sided-general}, and entrywise
multiplication by the matrix~$H_{\delta_{-1}}$, which maps $H_\mu$ for
some measure $\mu$ to the Hankel matrix of the reflection of~$\mu$
about the origin.

That $(2) \implies (1)$ is immediate. We now prove
$(1) \implies (3)$. Suppose (1) holds. Since
\[
F[H_{a \delta_0}] = (F(a) - F(0)) H_{\delta_0} + F(0) H_{\delta_1} =
H_{(F(a) - F(0))\delta_0 + F(0)\delta_1},
\]
the uniqueness in Theorem~\ref{Thamburger} gives that~$F(0) = 0$.

By considering only even rows and columns of Hankel matrices
corresponding to moments in $\momentr_4(\{ -u \})$,
$\momentr_4(\{ -1, 0 \})$, and
$\momentr(\{ -1, -u_0 \})$, we have embeddings
\begin{align*}
 & \momentr_2(\{ u^2 \}) \hookrightarrow \momentr_4(\{ -u \}), \\
 & \momentr_2(\{ 0, 1 \}) \hookrightarrow
\momentr_4(\{ -1, 0 \}), \\
\text{and} \quad &
\momentr(\{ 1, u_0^2 \}) \hookrightarrow \momentr(\{ -1, -u_0 \}).
\end{align*}
Thus $F[-]$ maps $\momentr_2(\{ u^2 \})$ into $\moment_2(\R)$,
$\momentr_2(\{ 0, 1 \})$ into $\moment_2(\R)$, and
$\momentr(\{ 1, u_0^2 \})$ into $\moment(\R)$.
Theorem~\ref{Treformulation} now gives that $F$ agrees with an
absolutely monotonic entire function $\widetilde{F}$ on~$( 0, \rho )$.

Next, considering $\momentr_2(\{ -1 \})$ gives that
$|F(-a)| \leq F(a)$ for any $a \in (0,\rho)$, whence $F$ is
locally bounded. In particular, $F$ maps $\momentr(\{-1\})$ into
$\moment([-1,0])$, by Theorem~\ref{Thamburger}.

We conclude by showing that $F$ is odd. Let $\mu = a \delta_{-1}$ for
some~$a \in (0, \rho)$ and note that $p_n(t) = (-t)^n (1+t)$ is
non-negative on~$[-1,0]$ for any non-negative integer~$n$. If
$F[\bs(\mu)] = \bs(\sigma)$, then, by applying~(\ref{Etrick}),
\begin{align*}
0 \leq \int_{-1}^0 p_n(t) \std\sigma & = 
(-1)^n (F(s_n(a \delta_{-1})) + F(s_{n+1}(a \delta_{-1})) \\
 & = (-1)^n (F((-1)^n a) + F((-1)^{n+1} a)).
\end{align*}
Taking $n = 0$ and~$1$ gives that
$0 \leq F(a) + F(-a) \leq 0$, and the final claim follows.
\end{proof}

Theorem \ref{Tminus-odd} has the following consequence.

\begin{corollary}\label{Cnew}
Define a \emph{checkerboard} matrix to be any real matrix
$A = (a_{ij})$ such that $(-1)^{i+j} a_{ij} > 0$ for all $i$, $j$. Given
a function $F : \R \to \R$, the following are equivalent. 
\begin{enumerate}
\item Applied entrywise, $F$ maps the set of positive Hankel checkerboard
matrices of all dimensions into itself.
\item Applied entrywise, $F$ maps the set of positive checkerboard
matrices of all dimensions into itself.
\item $F$ is odd and agrees on $(0,\infty)$ with an absolutely
monotonic entire function.
\end{enumerate}
\end{corollary}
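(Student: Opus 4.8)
The plan is to deduce Corollary~\ref{Cnew} from Theorem~\ref{Tminus-odd} by a change-of-variable that converts the checkerboard sign pattern into the reflection-of-a-measure structure already analysed there. The key observation is that a Hankel matrix $A = (a_{i+j})_{i,j \geq 0}$ is checkerboard precisely when $(-1)^{i+j} a_{i+j} = (-1)^{i+j} a_{(i+j)}$ has constant sign, i.e.\ when $((-1)^k a_k)_{k \geq 0}$ is a \emph{strictly positive} Hankel sequence; equivalently, writing $H_{\delta_{-1}}$ for the Hankel matrix of $\delta_{-1}$ (whose $(i,j)$ entry is $(-1)^{i+j}$), we have $A = H_{\delta_{-1}} \circ B$ with $B = (|a_{i+j}|)$ a positive Hankel matrix having all entries positive. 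By Theorem~\ref{Thamburger}, such $B$ are exactly the $H_\nu$ with $\nu \in \meas^+([0,\infty))$ not a multiple of $\delta_0$; and $A$ is then positive if and only if $H_\nu$ is, since entrywise multiplication by $H_{\delta_{-1}}$ is conjugation by the diagonal sign matrix $\mathrm{diag}((-1)^k)$. So positive Hankel checkerboard matrices are precisely the matrices $H_\mu$ with $\mu$ the reflection of a measure on $[0,\infty)$, up to the trivial rank-one cases.

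First I would record that $(1) \Leftarrow (3)$: given $F$ odd and absolutely monotonic on $(0,\infty)$, extend by $F(0)=0$ and note $F$ satisfies hypothesis~(3) of Theorem~\ref{Tminus-odd} (with $\widetilde F$ the absolutely monotonic entire function agreeing with $F$ on $(0,\infty)$), so by that theorem $F[-]$ preserves $\moment([-1,0])$; more precisely, the identity~\eqref{Ecomposite}, $F[H_\mu] = H_{\delta_{-1}} \circ \widetilde F[H_{\delta_{-1}} \circ H_\mu]$, holds for \emph{every} admissible $\mu$ whose reflection is supported on $[0,\infty)$, not just those on $[-1,0]$, because $\widetilde F$ is entire and hence $\widetilde F[-]$ preserves $\moment([0,\infty))$ by Theorem~\ref{Ttn}. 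Since the positive Hankel checkerboard matrices are exactly the $H_\mu$ with $\widetilde\mu := H_{\delta_{-1}}\circ H_\mu$-measure supported on $[0,\infty)$ and having all entries nonzero, and $\widetilde F[-]$ carries such matrices to matrices of the same form (again positive Hankel with nonnegative entries, by Theorem~\ref{Ttn}), multiplying back by $H_{\delta_{-1}}$ shows $F[A]$ is again a positive Hankel checkerboard matrix. This gives $(3)\implies(1)$.

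Next, $(1)\implies(2)$ is the analogue of the Hankel-to-all reduction already present in the paper: I would argue exactly as in the passage from Hankel to general matrices for $\moment([-1,0])$, namely that a positive checkerboard matrix $A$ can be written $A = D P D$ with $D = \mathrm{diag}((-1)^i)$ and $P$ a positive matrix with all entries positive, and $F[A] = D\, \widetilde F[P]\, D$ by oddness of $F$ and $F(0)=0$ (which follows from hypothesis~(1) applied to the rank-one checkerboard Hankel matrices, as in the proof of Theorem~\ref{Tminus-odd}); since $\widetilde F$ is absolutely monotonic, $\widetilde F[P]$ is positive by the Schur product theorem, and then $F[A] = D\widetilde F[P]D$ is positive checkerboard. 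But to \emph{apply} this we first need to know $F$ has the structure in (3), which is where Theorem~\ref{Tminus-odd} enters: hypothesis~(1) of the corollary includes preserving positivity on all rank-one and rank-three Hankel checkerboard matrices, and the rank-one checkerboard Hankel matrices $\bu\bu^T$ with $\bu$ in a geometric progression of ratio $-u_0^{\pm 1}$, $u_0\in(0,1)$, are exactly $H_\mu$ for $\mu$ supported on $\{-1,-u_0\}$ (after the scaling in Theorem~\ref{Thamburger}), so hypothesis~(1) of the corollary implies hypothesis~(1) of Theorem~\ref{Tminus-odd}; concretely the checkerboard condition restricted to $2\times 2$ and $3\times 3$ blocks gives the inequalities in~\eqref{Etrick} for the measures $a\delta_{-1}$, $a\delta_{-1}+b\delta_{v_0}$ appearing there. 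Hence Theorem~\ref{Tminus-odd} yields (3).

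Finally $(2)\implies(1)$ is trivial since Hankel checkerboard matrices form a subclass of checkerboard matrices, closing the cycle. The main obstacle I anticipate is the bookkeeping in $(1)\implies(3)$: I must check that the specific finite families of checkerboard matrices that hypothesis~(1) controls are rich enough to contain (images under the diagonal sign conjugation of) the test families $\moment(\{-1,-u_0\})$, $\moment_4(\{-1,0\})$, and $\bigcup_u \moment_4(\{-u\})$ required by Theorem~\ref{Tminus-odd}(1) --- in particular that one can realise the rank-two measures $a\delta_{-1}+b\delta_{v_0}$, with their mixed positive/negative support, as submatrices of genuine positive checkerboard Hankel matrices. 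This is the analogue, in the checkerboard setting, of the embedding lemmas (Lemma~\ref{Lrank1Hankel} and the truncated-moment reformulations) used repeatedly above, and should go through by the same device of conjugating by $\mathrm{diag}((-1)^i)$; but it is the one place where the correspondence between the two hypothesis lists must be verified term by term rather than merely asserted.
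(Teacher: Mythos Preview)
Your instinct at the end of the proposal is correct, and the obstacle you flag is fatal rather than merely bookkeeping. The reduction to Theorem~\ref{Tminus-odd}(1) requires the \emph{output} sequences $F[\bs(\mu)]$ to lie in $\moment((-\infty,0])$, but hypothesis~(1) of the corollary only tells you that $F[H_\mu]$ is positive, Hankel, and checkerboard. These conditions do \emph{not} force the representing measure onto $(-\infty,0]$: for $r\in(0,1)$ the measure $\tfrac{1-r}{2}\delta_1+\tfrac{1+r}{2}\delta_{-1}$ has moments $1,-r,1,-r,\ldots$, giving a positive Hankel checkerboard matrix with mass at $+1$. So your verification ``term by term'' cannot succeed: the output class in Corollary~(1) is strictly larger than the output class in Theorem~\ref{Tminus-odd}(1), and the step in the proof of Theorem~\ref{Tminus-odd} that extracts oddness (integrating $(-t)^n(1+t)$ over $[-1,0]$) genuinely needs $\sigma$ supported on $[-1,0]$.

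In fact the implication $(1)\Rightarrow(3)$ (and likewise $(2)\Rightarrow(3)$) is false as stated. Take $F(x)=x$ for $x\geq 0$ and $F(x)=x/2$ for $x<0$; this $F$ is not odd. For any checkerboard matrix $A$ one checks directly that
\[
F[A] = A \circ M, \qquad M_{ij} := \begin{cases} 1 & i+j \text{ even},\\ 1/2 & i+j \text{ odd},\end{cases}
\]
and $M=\tfrac34\,\bone_{N\times N}+\tfrac14\,\bv\bv^T$ with $\bv=(1,-1,1,\ldots)^T$ is positive semidefinite. Hence $F[A]=A\circ M$ is positive by the Schur product theorem, and it is Hankel and checkerboard whenever $A$ is. Thus $F$ satisfies both (1) and (2) while failing (3). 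What \emph{does} follow from~(1), via your even-index embeddings and Theorem~\ref{T1sided-general}, is that $F|_{(0,\infty)}$ agrees with an absolutely monotonic entire function and that $F$ preserves sign on $\R\setminus\{0\}$; oddness is an additional constraint that the checkerboard hypothesis alone does not supply. The paper states the corollary without proof as a consequence of Theorem~\ref{Tminus-odd}, but the equivalence as written does not hold.
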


We conclude this section with an even analogue of
Theorem~\ref{Tminus-odd}.

\begin{theorem}\label{Tminus-even}
Given $u_0 \in (0,1)$ and $F : ( -\rho, \rho ) \to \R$, where
$0 < \rho \leq \infty$, the following are equivalent.
\begin{enumerate}
\item $F[-]$ maps $\momentr( \{ -1, -u_0 \})$ into
$\moment([0,\infty))$ and
\[
\momentr_4(\{ -1, 0 \}) \cup \bigcup_{u \in (0,1)}
\momentr_4(\{ -u \})
\]
into $\moment_4([0,\infty))$.

\item $F[-]$ sends $\momentr([-1,0])$ to $\moment([0,1])$.

\item There exists an absolutely monotonic entire function
$\widetilde{F}$ such that
\[
F(x) = \begin{cases}
\widetilde{F}(x) & \text{if } x \in (0,\rho),\\
\widetilde{F}(-x) \qquad & \text{if } x \in (-\rho,0).
\end{cases}
\]
Moreover, $0 \leq F(0) \leq \lim_{\epsilon \to 0} F(\epsilon)$.
\end{enumerate}
\end{theorem}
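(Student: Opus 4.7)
The strategy is to mirror the proof of Theorem \ref{Tminus-odd}, modifying the final steps to establish evenness rather than oddness of~$F$. The implication $(2) \implies (1)$ is immediate, since $\moment([0,1]) \subseteq \moment([0,\infty))$ and this inclusion passes to truncations.

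For $(3) \implies (2)$, let $\mu \in \meas^+([-1,0])$. The degenerate case $\mu = c\delta_0$ follows from a direct computation: $F[\bs(\mu)] = (F(c), F(0), F(0), \ldots)$ coincides with the moment sequence of $F(0)\delta_1 + (F(c) - F(0))\delta_0 \in \meas^+([0,1])$, where $F(c) \geq F(0)$ holds because $F(c) = \widetilde{F}(c) \geq \widetilde{F}(0) \geq F(0)$ when $c > 0$. Otherwise, let $\nu \in \meas^+([0,1])$ be the pushforward of $\mu$ under $x \mapsto -x$, so that $s_k(\mu) = (-1)^k s_k(\nu)$ with $s_k(\nu) > 0$ for every $k$. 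Evenness of $F$ away from~$0$ then gives $F(s_k(\mu)) = \widetilde{F}(s_k(\nu))$ for all $k$, so $F[\bs(\mu)] = \widetilde{F}[\bs(\nu)] \in \moment([0,1])$ by Theorem~\ref{T1sided-general}.

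For $(1) \implies (3)$, I first establish that $F$ agrees on $(0,\infty)$ with an absolutely monotonic entire function $\widetilde{F}$, by the same reasoning as in the proof of Theorem~\ref{Tminus-odd}. Restricting the Hankel matrices attached to measures in $\meas^+(\{-u\})$, $\meas^+(\{-1, 0\})$, and $\meas^+(\{-1, -u_0\})$ to even-indexed rows and columns yields the embeddings $\moment_2(\{u^2\}) \hookrightarrow \moment_4(\{-u\})$, $\moment_2(\{0, 1\}) \hookrightarrow \moment_4(\{-1, 0\})$, and $\moment(\{1, u_0^2\}) \hookrightarrow \moment(\{-1, -u_0\})$, after which Theorem~\ref{Treformulation} delivers the claimed $\widetilde{F}$.

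The main obstacle is showing that $F$ is even on $\R \setminus \{0\}$. For this, consider $\mu = a\delta_{-1}$ with $a > 0$; its moment sequence $s_k(\mu) = a(-1)^k$ is mapped by $F$ to the alternating sequence $(F(a), F(-a), F(a), \ldots)$, which by hypothesis is realized by some $\sigma \in \meas^+([0,\infty))$. Uniform boundedness of these values forces $\sigma$ to be supported on $[0,1]$, while the identity $s_{k+2}(\sigma) = s_k(\sigma)$ yields $\int_0^1 (1-t^2) t^k \std\sigma(t) = 0$ for every $k \geq 0$. Taking $k = 0$ and using non-negativity of $(1-t^2)$ on $[0,1]$ forces $\sigma$ to be supported at $\{1\}$, so $\sigma = c\delta_1$ and hence $F(a) = F(-a) = c$. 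To finish, applying the hypothesis to $\mu = a\delta_0 \in \meas^+(\{-1,0\})$ and examining the $3\times 3$ Hankel matrix formed from $F[\bs_4(\mu)]$ yields $F(0) \geq 0$ together with $F(0)(F(a) - F(0)) \geq 0$; letting $a \to 0^+$ recovers $F(0) \leq \widetilde{F}(0) = \lim_{\epsilon \to 0^+} F(\epsilon)$, as required.
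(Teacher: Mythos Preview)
Your proof is correct and follows essentially the same route as the paper, which merely indicates that the argument parallels Theorem~\ref{Tminus-odd} with the polynomials $p_n(t)=t^n(1-t)$ in place of $(-t)^n(1+t)$. The one point where you deviate is the evenness step: the paper's suggested path gives $F(a)-F(-a)\geq 0$ from $n=0$ and $F(-a)-F(a)\geq 0$ from $n=1$, whereas you use the periodicity $s_{k+2}(\sigma)=s_k(\sigma)$ to force $\int_0^1(1-t^2)\,d\sigma=0$ and hence $\sigma=c\delta_1$. Both arguments require first observing that the bounded moment sequence forces $\sigma$ to be supported on $[0,1]$, and both yield $F(a)=F(-a)$; your version is a mild and equally clean variant.
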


\begin{proof}
This is similar to the proof of Theorem \ref{Tminus-odd}; to show that
$(1) \implies (3)$, one may use the polynomials $p_n(t) =
t^n(1-t)$. We omit further details.
\end{proof}

\section{Transformers with compact domain}\label{Sdomain}

The goal of this section is to show how results in the previous
sections can be refined when the moments are contained in a compact
domain. Indeed, when the domain of $F$ is a compact interval~$I$, the
situation is more complex; absolute monotonicity, or even continuity
of $F$, does not extend automatically from the interior of $I$ to its
end~points. This was already observed by Rudin via specific
counterexamples; see Remark~(a) at the end of \cite{Rudin59}. To the
best of our knowledge, characterization results in this setting are
not known.

We now take a closer look at this phenomenon. We begin by
characterizing the functions preserving positivity of Hankel matrices
in $\bp_N(I)$ for all $N$, where $I = [0,\rho]$ and
$0 < \rho < \infty$.

\begin{proposition}\label{P1sided}
Suppose $F : I \to \R$, where $I = [0,\rho]$ and
$0 < \rho < \infty$. The following are equivalent.
\begin{enumerate}
\item $F[-]$ preserves positivity on all positive Hankel matrices with
entries in~$I$.
\item $F$ is absolutely monotonic on $[0,\rho)$ and
$F(\rho) \geq \lim_{x \to \rho^-} F(x)$.
\item $F[-]$ preserves positivity on all positive matrices with
entries in~$I$.
\end{enumerate}
If, instead, $I = [0,\rho)$ where $0 < \rho \leq \infty$, then the
same equivalences hold, with (2) replaced by the requirement that~$F$
is absolutely monotonic on~$[0,\rho)$.
\end{proposition}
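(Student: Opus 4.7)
The plan is to dispatch the implications $(3) \implies (1) \implies (2) \implies (3)$, noting that $(3) \implies (1)$ is immediate, as the Hankel matrices form a subclass of all positive matrices.

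For $(1) \implies (2)$, I would first restrict attention to Hankel matrices whose entries lie strictly inside $[0,\rho)$; by Theorem~\ref{Thamburger}, these correspond to moment sequences of measures on $[0,1]$ with mass $s_0 < \rho$. Theorem~\ref{Tbounded-mass}, applied to Theorem~\ref{T1sided}, then yields that $F$ is absolutely monotonic on $[0,\rho)$ together with the boundary condition $0 \leq F(0) \leq \lim_{\epsilon \to 0^+} F(\epsilon)$. To obtain the right-endpoint inequality $F(\rho) \geq L$, where $L := \lim_{x \to \rho^-} F(x)$, apply $F[-]$ to the rank-one positive Hankel matrix
\[
\rho \begin{pmatrix} 1 \\ a/\rho \end{pmatrix} \begin{pmatrix} 1 & a/\rho \end{pmatrix}
= \begin{pmatrix} \rho & a \\ a & a^2/\rho \end{pmatrix},
\]
whose entries lie in $[0,\rho]$ for every $a \in [0,\rho]$. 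The $2 \times 2$ positivity condition gives $F(\rho) F(a^2/\rho) \geq F(a)^2$, and letting $a \to \rho^-$ yields $F(\rho) L \geq L^2$; this delivers $F(\rho) \geq L$ when $L > 0$, while the case $L = 0$ reduces to $F(\rho) \geq 0$, immediate from the $1 \times 1$ matrix $(\rho)$.

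For $(2) \implies (3)$, given $A \in \bp_N([0,\rho])$, I would approximate it by $A_t := (1-t) A$ for $t \in (0,1)$, whose entries lie strictly inside $[0,\rho)$. By Theorem~\ref{Twidder}, the absolute monotonicity of $F$ on $[0,\rho)$ provides an analytic extension to the open disc of radius $\rho$ with non-negative Taylor coefficients, so the Schur product theorem yields $F[A_t] \in \bp_N$. Letting $t \to 0^+$ gives $G[A] \in \bp_N$, where $G$ agrees with $F$ on $[0,\rho)$ and $G(\rho) := L$. Hence
\[
F[A] = G[A] + ( F(\rho) - L )\, M,
\]
where $M$ is the $0/1$ matrix with $M_{ij} = 1$ precisely when $A_{ij} = \rho$. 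Since $F(\rho) - L \geq 0$, it suffices to show that $M$ is positive semidefinite.

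This is where I expect the main obstacle. My plan is a structural analysis of $A$ via its principal minors. Let $J := \{ i : A_{ii} = \rho \}$. From the $2 \times 2$ condition $A_{ii} A_{jj} \geq A_{ij}^2$ and the bound $A_{ij} \leq \rho$, it follows that $A_{ij} = \rho$ forces $A_{ii} = A_{jj} = \rho$, so $M$ is supported on $J \times J$. Within $J$, the $3 \times 3$ determinant condition
\[
1 - x^2 - y^2 - z^2 + 2 x y z \geq 0 \qquad \text{with } x = y = 1
\]
collapses to $(z-1)^2 \leq 0$, forcing $z = 1$; in other words, the relation $A_{ij} = \rho$ is transitive (hence an equivalence relation) on $J$. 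After reindexing, $M$ therefore decomposes as a block-diagonal matrix whose nonzero blocks are all-ones matrices, one per equivalence class within $J$, and so $M$ is a direct sum of rank-one positive matrices, hence PSD. For the half-open domain $I = [0,\rho)$, the same argument applies, but no boundary step at $\rho$ is needed, as the approximation already confines all entries to $[0,\rho)$.
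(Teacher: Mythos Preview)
Your $(2) \implies (3)$ argument is correct and essentially identical to the paper's: both show that the $\rho$-entries of $A$ form a block-diagonal all-ones pattern via a $3 \times 3$ determinant (the paper's identity~(\ref{Eudcc}) is exactly your transitivity step), then decompose $F[A]$ as the continuous extension applied entrywise plus a non-negative multiple of that block matrix. Your right-endpoint argument using the rank-one $2 \times 2$ Hankel matrix is also fine, and only cosmetically different from the paper's use of the moment matrix of $(\rho-a)\delta_1 + a\delta_0$.

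However, your $(1) \implies (2)$ has a genuine gap. Statement~(2) requires $F$ to be absolutely monotonic on the \emph{half-closed} interval $[0,\rho)$, which by the paper's convention means $F$ must be continuous at~$0$. Invoking the bounded-mass version of Theorem~\ref{T1sided} only yields absolute monotonicity on the \emph{open} interval $(0,\rho)$ together with the inequality $0 \leq F(0) \leq \lim_{\epsilon \to 0^+} F(\epsilon)$; it does not force equality. Indeed, that theorem tests $F[-]$ only against Hankel matrices arising from measures on $[0,1]$, and as the paper observes immediately after Proposition~\ref{P1sided} (and again in Remark~\ref{R1sided}), such preservers \emph{may} be discontinuous at~$0$. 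To extract continuity you must use a positive Hankel matrix that is \emph{not} the moment matrix of a measure on~$[0,1]$. The paper uses
\[
H_a := \begin{pmatrix} a & 0 & a\\ 0 & a & a\\ a & a & 2a \end{pmatrix},
\qquad a \in [0,\rho/2),
\]
for which $\lim_{a \to 0^+} \det F[H_a] = -F^+(0)\bigl(F(0) - F^+(0)\bigr)^2$, forcing $F(0) = F^+(0)$. Without this step (or an equivalent one) the implication fails: the function $F(0) = 0$, $F(x) = 1 + x$ for $x > 0$ satisfies everything you have established but violates~(1), since $\det F[H_a] = -(1+a)^2 < 0$.
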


Note the contrast with Theorem~\ref{T1sided-general}: if $F[-]$ is
required only to preserve positive Hankel matrices arising from moment
sequences, then $F$ may be discontinuous at $0$, but this cannot occur
here.

\begin{proof}
Clearly $(3) \implies (1)$. Next, suppose $(1)$ holds and note that
$F$ is absolutely monotonic on $(0,\rho)$, by
Theorem~\ref{T1sided-general}. Consider the positive Hankel matrices
\[
H_a := \begin{pmatrix} a & 0 & a\\ 0 & a & a\\ a & a & 2a
\end{pmatrix}, \qquad \text{where } a \in [0, \rho/2).
\]
As $F[H_a]$ is positive, so
$0 \leq F( 0 ) \leq F^+(0) := \lim_{a \to 0^+} F(a)$. Furthermore,
\[
0 \leq \lim_{a \to 0^+} \det F[H_a] = -F^+(0) (F(0) - F^+(0))^2,
\]
whence $F(0) = F^+(0)$, and $F$ is right continuous at the
origin. Finally, considering the first two leading principal minors of
the Hankel matrix for the measure $(\rho-a) \delta_1 + a \delta_0$,
where $a \to \rho^-$, gives that
$F(\rho) \geq \lim_{a \to \rho^-} F(a)$. Hence $(1) \implies (2)$.

Finally, suppose $(2)$ holds. We first claim that if
$A \in \bp_N((-\infty,\rho])$ then the entries of $A$ equalling $\rho$
form a (possibly empty) block diagonal submatrix, upon suitably
relabelling the indices. Indeed,
\begin{equation}\label{Eudcc}
0 \leq \det \begin{pmatrix} \rho & \rho & a \\ \rho & \rho & \rho \\ a &
\rho & \rho \end{pmatrix} = - \rho (\rho - a)^2,
\quad \text{so} \quad a = \rho.
\end{equation}
Now let $B_A$ be the block-diagonal matrix with $(i,j)$th
entry equal to~$1$ if $a_{i j}=\rho$ and~$0$ otherwise. If $g$ is the
continuous extension of~$F|_{[0,\rho)}$ to $\rho$, then
\[
F[A] = g[A] + (F(\rho) - g(\rho)) B_A \geq 0,
\]
since both matrices are positive semidefinite. Hence
$(2) \implies (3)$.

Finally, when $I = [0,\rho)$, that $(2) \implies (3) \implies (1)$ is
immediate, and $(1) \implies (2)$ is shown as above.
\end{proof}

\begin{remark}\label{R1sided}
A similar argument to Proposition~\ref{P1sided} reveals that $F[-]$
preserves positivity on the set
$\{ \bs(\mu) \in \moment([0,1]) : s_0(\mu) \in [0,\rho] \}$ if and
only if $F$ is absolutely monotonic on~$(0,\rho)$ and such that
$0 \leq F(0) \leq \lim_{\epsilon \to 0^+} F(\epsilon)$ and
$\lim_{x \to \rho^-} F(x) \leq F(\rho)$.
\end{remark}

We next examine the case where the domain of $F$ is a symmetric
compact interval $[-\rho,\rho]$. The functions preserving positivity
of Hankel matrices when applied entrywise are completely characterized
as follows.

\begin{proposition}\label{PclosedHankel}
Suppose $F : I \to \R$, where $I = [-\rho,\rho]$ and
$0 < \rho \leq \infty$. The following are equivalent.
\begin{enumerate}
\item $F[-]$ preserves positivity on all positive Hankel matrices with
entries in~$I$.
\item $F[-]$ preserves positivity on all positive Hankel matrices with
entries in~$I$ that arise from moment sequences.
\item $F$ is real analytic on $(-\rho,\rho)$, absolutely monotonic
on~$(0,\rho)$, and such that
\[
F(\rho) \geq \lim_{x \to \rho^-} F(x) \quad \text{and} \quad
|F(-\rho)| \leq F(\rho).
\]
\end{enumerate}
\end{proposition}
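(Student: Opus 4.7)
The plan is as follows. The implication $(1) \implies (2)$ is immediate. To prove $(2) \implies (3)$, I would first restrict $F[-]$ to Hankel matrices $H_\mu$ with $s_0(\mu) < \rho$, so that all entries of $H_\mu$ lie in $(-\rho, \rho)$; applying Theorem~\ref{Tbounded-mass} to Theorem~\ref{T2sided-general} then yields an absolutely monotonic entire function $\widetilde F$ which agrees with $F$ on $(-\rho, \rho)$. The endpoint condition $F(\rho) \geq \lim_{x \to \rho^-} F(x)$ follows by computing the leading $2 \times 2$ principal of $F[H_{\mu_a}]$ for $\mu_a := (\rho - a) \delta_1 + a \delta_0$ with $a \in (0,\rho)$ and letting $a \to 0^+$, while $|F(-\rho)| \leq F(\rho)$ follows from positivity of the leading $2 \times 2$ principal of $F[H_{\rho\delta_{-1}}]$, namely $\bigl(\begin{smallmatrix} F(\rho) & F(-\rho) \\ F(-\rho) & F(\rho) \end{smallmatrix}\bigr)$.

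The substantive direction is $(3) \implies (1)$. Given a positive Hankel matrix $H$ with entries in $[-\rho, \rho]$, Theorem~\ref{Thamburger}(2) gives $H = H_\mu$ for a unique $\mu \in \meas^+([-1,1])$ with $s_0(\mu) \leq \rho$. Decompose $\mu = a\delta_1 + b\delta_{-1} + \nu$, where $a, b \geq 0$ and $\nu$ is supported on $(-1, 1)$, and split into three cases. First, if $s_0(\mu) < \rho$ then every entry of $H_\mu$ lies in $(-\rho, \rho)$, so $F[H_\mu] = \widetilde F[H_\mu]$ is positive by the Schur product theorem. Second, if $s_0(\mu) = \rho$ and $\nu \neq 0$, the strict inequality $|s_k(\nu)| < s_0(\nu)$ (valid for $k \geq 1$, since $\nu$ is supported on $(-1, 1)$) forces $|s_k(\mu)| < \rho$ for every $k \geq 1$, so only the top-left entry of $H_\mu$ attains $\rho$; writing
\[
F[H_\mu] = \widetilde F[H_\mu] + \bigl( F(\rho) - \widetilde F(\rho) \bigr) H_{\delta_0}
\]
exhibits $F[H_\mu]$ as a sum of two positive matrices, using $\widetilde F(\rho) = \lim_{x \to \rho^-} F(x) \leq F(\rho)$. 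Third, if $\nu = 0$, then $\mu = a\delta_1 + b\delta_{-1}$ with $a + b = \rho$, and $s_k(\mu) = a + (-1)^k b$; a direct computation yields
\[
F[H_\mu] = \frac{F(\rho) + F(a-b)}{2}\, \mathbf{e}\mathbf{e}^T + \frac{F(\rho) - F(a-b)}{2}\, \mathbf{o}\mathbf{o}^T,
\]
where $\mathbf{e} := (1, 1, \ldots)^T$ and $\mathbf{o} := (1, -1, 1, \ldots)^T$, so that positivity is equivalent to $|F(a-b)| \leq F(\rho)$. This inequality is trivial when $b = 0$ (using $F(\rho) \geq 0$), is precisely the endpoint condition from~(3) when $a = 0$, and follows for $a, b > 0$ from the chain $|\widetilde F(a-b)| \leq \widetilde F(|a-b|) \leq \widetilde F(\rho) \leq F(\rho)$, since $|a-b| < \rho$.

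The main conceptual point, more than a technical obstacle, is the rigidity of positive Hankel matrices at the boundary: attaining $|s_k| = \rho$ for some $k \geq 1$ forces $\mu$ to be supported on $\{-1, 1\}$, which collapses all extremal configurations into the explicit rank-two decomposition of the third case. No deeper tools beyond Theorems~\ref{Thamburger} and~\ref{T2sided-general} and the Schur product theorem are needed.
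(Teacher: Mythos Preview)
Your argument for $(2) \implies (3)$ is correct and matches the paper. The gap is in $(3) \implies (1)$: assertion~(1) concerns \emph{all} positive Hankel matrices with entries in $[-\rho,\rho]$, of every finite size, not just those arising as $H_\mu$ for measures on $[-1,1]$. Theorem~\ref{Thamburger}(2) applies only to semi-infinite Hankel matrices, and a finite positive Hankel matrix with entries in $[-\rho,\rho]$ need not be a truncation of any such matrix. For instance, with $\rho = 2$ the matrix $\bigl(\begin{smallmatrix} 1 & 0 \\ 0 & 2 \end{smallmatrix}\bigr)$ is a positive Hankel matrix with entries in $[-2,2]$, but any positive $3\times 3$ Hankel extension forces $s_4 \geq 4 > \rho$, so no measure $\mu \in \meas^+([-1,1])$ realizes it, and your decomposition $\mu = a\delta_1 + b\delta_{-1} + \nu$ is unavailable. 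Thus your three-case analysis proves only $(3) \implies (2)$; the passage to~(1) is not addressed. This is exactly why the proposition distinguishes (1) from (2) in the first place.

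The paper's route to $(3) \implies (1)$ avoids the moment representation. For a finite positive Hankel matrix $A$ with no entry equal to $-\rho$, it invokes the block-diagonal structure of the $\rho$-entries established in the proof of Proposition~\ref{P1sided} (which holds for arbitrary positive matrices with entries at most~$\rho$) to write $F[A] = g[A] + (F(\rho)-g(\rho))B_A$, where $g$ is the continuous extension of $F|_{(-\rho,\rho)}$; both summands are positive by the Schur product theorem and the hypothesis $F(\rho) \geq g(\rho)$. If some entry of $A$ equals $-\rho$, the paper shows directly from $2\times 2$ and $3\times 3$ principal minors that $A$ must be the checkerboard matrix $((-1)^{i+j}\rho)$, after which $|F(-\rho)| \leq F(\rho)$ suffices. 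Your rank-two decomposition in Case~3 is exactly what this checkerboard case reduces to, so that part of your work is recoverable; what is missing is the treatment of finite Hankel matrices that hit $\rho$ but are not moment matrices. (A minor side remark: $\widetilde F$ is only guaranteed to be analytic on $D(0,\rho)$, not entire, though this does not affect the substance of your argument.)
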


\begin{proof}
That $(1) \implies (2)$ is immediate, while $(2) \implies (3)$ follows
from the extension of Theorem~\ref{T2sided} given by
Theorem~\ref{T2sided-general}, and the proofs of
Proposition~\ref{P1sided} and Lemma~\ref{Lbounded}. Finally, if (3)
holds, then (1) follows by Proposition~\ref{P1sided}, the Schur
product theorem, and the following claim.

\emph{The only Hankel matrix in $\bp_{N+1}([-\rho,\rho])$ with an entry
${-\rho}$ is the checkerboard matrix with $(i,j)$th entry
$(-1)^{i+j} \rho$.}

To prove the claim, let the rows and columns of the positive Hankel
matrix $A$ be labelled by $0$, \dots, $N$, and suppose
$a_{ij} = -\rho$. Then $i+j$ is odd and $a_{ll} = a_{l+1,l+1} = \rho$,
where  $2l+1 = i+j$. Repeatedly considering principal $2 \times 2$
minors shows that $a_{pq} = \rho$ if $p+q$ is even. Now let
$m, n \in [0,N]$ be odd, with $m < n$, and  denote by~$C$ the
principal $3 \times 3$ minor of $A$ corresponding to the labels $0$,
$m$, and~$n$. Writing
\[
C = \begin{pmatrix} \rho & a_{0m} & \rho\\ a_{0m} & \rho & a_{0n}\\ \rho
& a_{0n} & \rho \end{pmatrix},
\]
we have that $0 \leq \det C = -\rho(a_{0m} - a_{0n})^2$,
whence $a_{0m} = a_{0n}$. Taking $m$ or $n$ to equal $i+j$ shows that
these entries equal $-\rho$, which gives the claim.
\end{proof}

We end this section by considering functions preserving positivity for
all matrices in $\bigcup_{N \geq 1} \bp_N([-\rho,\rho])$.
Theorem~\ref{T2sided-general} implies that every such function
$F$ is real analytic when restricted to $(-\rho,\rho)$, and absolutely
monotonic on~$(0,\rho)$. The following result provides a sufficient
condition for $F$ to preserve positivity, which is also necessary if the
analytic restriction is odd or even.

\begin{proposition}\label{Pboundary}
Given $\rho \in (0,\infty)$, let $I = [-\rho,\rho]$ and suppose
$F : I \to \R$ is real analytic on $(-\rho,\rho)$, absolutely
monotonic on~$(0,\rho)$, and such that the limits
\smash[b]{$\displaystyle \lim_{x \to \rho^-} F(\pm x)$} both exist and
are finite. If
\begin{equation}\label{Ejump}
\left| F(-\rho) - \lim_{x \to -\rho^+} F(x) \right| \leq
F(\rho) - \lim_{x \to \rho^-} F(x),
\end{equation}
then $F[-]$ preserves positivity on the space of positive matrices
with entries in $I$. The converse holds if $F|_{(-\rho,\rho)}$ is
either odd or even.
\end{proposition}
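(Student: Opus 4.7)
My plan is to decompose $F$ on $[-\rho,\rho]$ as $F = g + \alpha \chi_{\{\rho\}} + \beta \chi_{\{-\rho\}}$, where $g : [-\rho,\rho] \to \R$ is the continuous extension of $F|_{(-\rho,\rho)}$, and $\alpha := F(\rho) - g(\rho)$, $\beta := F(-\rho) - g(-\rho)$ encode the endpoint jumps. The absolute monotonicity of $F$ on $(0,\rho)$ and the finite limits at $\pm\rho$ imply, via Theorem~\ref{Twidder} and Abel's theorem, that $g(x) = \sum_{n \geq 0} c_n x^n$ on the entire closed interval with $c_n \geq 0$. Hence $g[-]$ preserves positivity on matrices with entries in $[-\rho,\rho]$, by the Schur product theorem. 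Since for any real matrix $A$,
\[
F[A] = g[A] + \alpha B_+[A] + \beta B_-[A],
\]
where $B_\pm[A]$ is the $0$/$1$ indicator matrix for entries of $A$ equal to $\pm\rho$, the sufficient direction reduces to showing that $\alpha B_+[A] + \beta B_-[A]$ is positive semidefinite whenever $A$ is.

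The heart of this step is a two-sided extension of~\eqref{Eudcc}: if $A$ is positive with entries in $[-\rho,\rho]$ and $a_{ij} \in \{\pm\rho\}$ with $i \neq j$, then $a_{ii} = a_{jj} = \rho$, and rows $i$ and $j$ of $A$ are equal (if $a_{ij} = \rho$) or negatives of one another (if $a_{ij} = -\rho$). A short $3 \times 3$ determinant computation mirroring~\eqref{Eudcc} yields both claims. Setting $S := \{i : a_{ii} = \rho\}$, it follows that $A|_{S \times S}$ splits into block-diagonal rank-one pieces of the form $\rho\, \epsilon \epsilon^T$ for sign vectors $\epsilon$, and that $B_\pm[A]$ vanishes outside $S$. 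On each such block, $\alpha B_+[A] + \beta B_-[A] = \tfrac{\alpha+\beta}{2} J + \tfrac{\alpha-\beta}{2}\, \epsilon \epsilon^T$, where $J$ is the all-ones matrix of the block; this is a non-negative combination of rank-one positive semidefinite matrices by the hypothesis $|\beta| \leq \alpha$.

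For the converse under the odd or even assumption, I would probe $F$ with the rank-two family
\[
A_\epsilon := (\rho - \epsilon)\, \bu_1 \bu_1^T + \epsilon\, \bu_2 \bu_2^T, \qquad \epsilon \in (0,\rho),
\]
where $\bu_1 := (1,1,-1,-1)^T$ and $\bu_2 := (1,-1,1,-1)^T$ are two non-trivial characters of $(\Z/2\Z)^2$. Each $A_\epsilon$ is positive semidefinite with entries in $\{\pm\rho,\, \pm(\rho-2\epsilon)\}$; letting $\epsilon \to 0^+$, the matrix $F[A_\epsilon]$ converges entrywise to a positive semidefinite $M$ whose entries are $F(\pm\rho)$ at the endpoint positions and the one-sided limits $\lim_{x \to \pm\rho^\mp} F(x)$ at the approaching ones. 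Expanding $M$ in the orthogonal basis $\{\bone \bone^T, \bu_1 \bu_1^T, \bu_2 \bu_2^T, \bu_3 \bu_3^T\}$, where $\bu_3 := \bu_1 \circ \bu_2$, diagonalises $M$; the non-negativity of the resulting four eigenvalues, combined with the identity $\lim_{x \to -\rho^+} F(x) = \pm \lim_{x \to \rho^-} F(x)$ in the even and odd cases respectively, is then seen by direct substitution to be equivalent to~\eqref{Ejump}.

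The main obstacle is the structural lemma in the sufficient direction. Extending~\eqref{Eudcc} to the two-sided regime requires simultaneously handling entries equal to $+\rho$ (which force row equality) and $-\rho$ (which force row sign reversal), and then checking that the resulting relation on $S$ is a bona fide equivalence relation whose classes support rank-one signed patterns. Once this block-diagonal decomposition is in hand, the remainder of both directions amounts to explicit linear algebra.
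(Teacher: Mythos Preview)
Your forward direction is essentially the paper's argument: both decompose $F = g + (\text{jump terms})$, show that $g[-]$ preserves positivity, and then analyse the structure of entries equal to $\pm\rho$ to handle the jump contribution. The paper applies the argument of~\eqref{Eudcc} to $A \circ A$ and then cites the classification in~\cite{Matrix01psd} of Hermitian positive semidefinite matrices with entries of modulus $0$ or $1$; you instead derive directly, via a $3\times 3$ determinant, that $a_{ij}=\pm\rho$ forces row~$i$ to equal $\pm$ row~$j$, which yields the same signed block structure without the external reference. One wording caveat: it is the indicator $B_+[A]+B_-[A]$, not $A|_{S\times S}$ itself, that is block diagonal on $S$, since entries between distinct equivalence classes may lie strictly inside $(-\rho,\rho)$; your subsequent computation $\alpha B_+ + \beta B_- = \tfrac{\alpha+\beta}{2}\bone\bone^T + \tfrac{\alpha-\beta}{2}\epsilon\epsilon^T$ on each block is unaffected and is a clean way to finish.

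Your converse, however, takes a genuinely different route and is in fact stronger than the paper's. The paper tests with a $3\times 3$ matrix and, after taking $a\to\rho^-$, obtains the inequality $\Delta_-^2\, g(\rho) \le \Delta_+\bigl(g(\rho)F(\rho)-g(-\rho)^2\bigr)$, which collapses to~\eqref{Ejump} only after invoking $g(-\rho)^2=g(\rho)^2$ from the odd/even hypothesis. Your $4\times 4$ probe $A_\epsilon=(\rho-\epsilon)\bu_1\bu_1^T+\epsilon\,\bu_2\bu_2^T$, built from characters of $(\Z/2\Z)^2$, gives a limit matrix $M$ that is diagonalised by $\{\bone,\bu_1,\bu_2,\bu_3\}$, and two of its eigenvalues are exactly $\alpha\pm\beta$. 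Their non-negativity yields~\eqref{Ejump} immediately. Notice that this step uses neither $g(-\rho)=\pm g(\rho)$ nor the other two eigenvalues: your argument therefore establishes the converse \emph{unconditionally}, removing the odd/even restriction in the statement.
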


The inequality~(\ref{Ejump}) says that any jump in~$F$ at~$-\rho$ is
bounded above by the jump at~$\rho$, which is non-negative.

\begin{proof}
Let $g$ denote the continuous function on $[-\rho,\rho]$ which agrees
with $F$ on $(-\rho,\rho)$, and let the jumps
$\Delta_\pm := F(\pm \rho) - g(\pm \rho)$. Then~(\ref{Ejump}) is
equivalent to $|\Delta_-| \leq \Delta_+$.

By the Schur product theorem and Proposition \ref{P1sided}, $F[-]$
preserves positivity on $\bp_N((-\rho,\rho])$ for all $N$. Now suppose
$A \in \bp_N([-\rho,\rho])$ has some entry equal to $-\rho$, where
$N\geq1$. Then the entries of $A$ with modulus~$\rho$ form a block
diagonal submatrix upon suitable relabelling of indices. This follows
from the argument given in the proof of Proposition~\ref{P1sided},
applied to the $\rho^2$-entries of~$A \circ A$.
Given this, and after further relabelling of indices, each block
submatrix is of the form
\[
\begin{pmatrix}
\rho \bone_{n_j \times n_j} & -\rho \bone_{n_j \times m_j} \\
-\rho \bone_{m_j \times n_j} & \rho \bone_{m_j \times m_j}
\end{pmatrix},
\]
by the main result in \cite{Matrix01psd}, where $j=1$, \dots, $r$.
Then
\[
F[A] = g[A] + B', \qquad \text{where } B' = \oplus_{j=1}^k 
\begin{pmatrix} \Delta_+ \cdot \bone_{n_j \times n_j} &
\Delta_- \cdot \bone_{n_j \times m_j} \\
\Delta_- \cdot \bone_{m_j \times n_j} &
\Delta_+ \cdot \bone_{m_j \times m_j}
\end{pmatrix},
\]
and this is positive semidefinite, by~(\ref{Ejump}). Thus $F[-]$
preserves $\bigcup_{N \geq 1} \bp_N([-\rho,\rho])$.

For the converse, we show that (\ref{Ejump}) holds if~$F[-]$ preserves
positivity on just the set~$\bp_3([-\rho,\rho])$
and~$F|_{(-\rho,\rho)}$ is odd or even. Note first that $\Delta_+ \geq
0$, working with $2 \times 2$ matrices as above. Next, consider the
positive matrix
\[
A := \begin{pmatrix}
a^2 / \rho & -a & a \\ -a & \rho & -\rho \\ a & -\rho & \rho
\end{pmatrix},
\]
and note that
\[
0 \leq \lim_{a \to \rho^-} \det F[A] = \begin{vmatrix}
g(\rho) & g(-\rho) & g(\rho) \\
g(-\rho) & F(\rho) & F(-\rho) \\
g(\rho) & F(-\rho) & F(\rho)
\end{vmatrix} = \Delta_+(g(\rho) F(\rho) - g(-\rho)^2) -
g(\rho) \Delta_-^2.
\]
It follows that $\Delta_-^2 g(\rho) \leq \Delta_+^2 g(\rho)$ if
$g(\rho^2) = g(-\rho)^2$, so if $g = F|_{(-\rho,\rho)}$ is odd or
even. This gives the result.
\end{proof}

\begin{remark}
Propositions~\ref{PclosedHankel} and~\ref{Pboundary} indicate the
existence of functions discontinuous at~$\pm \rho$ which preserve
positivity for Hankel matrices, but not all matrices, in contrast to
the one-sided setting of Proposition~\ref{P1sided}.

Indeed, if $g$ is an odd or even function which is continuous
on~$[-\rho,\rho]$ and absolutely monotonic on~$(0,\rho)$,
define $F$ to be equal to $g$ on $(-\rho, \rho]$, and take $F(-\rho)$
to be any element of $(-F(\rho), F(\rho)]$. Then $F$ preserves
positivity on all Hankel matrices with entries in $[-\rho, \rho]$, but
does not preserve positivity
on~$\bigcup_{N \geq 1} \bp_N([-\rho,\rho])$.
\end{remark}

\section{Multivariable generalizations}\label{Smulti}

In this section we classify the preservers of moments arising from
admissible measures in higher-dimensional Euclidean space, both in
their totality and by considering their marginals.

\subsection{Transformers of multivariable moment sequences}

The initial generalization to higher dimensions of our
characterization of moment-preserving functions raises no
complications. However, the failure of Hamburger's theorem in higher
dimensions, that is, the lack of a characterization of moment
sequences by positivity of an associated Hankel-type kernel, means
some extra work is required. Below, we isolate this additional
challenge and provide a generalization of our main result.

Let $\mu$ be a non-negative measure on $\R^d$, where $d \geq 1$, which
has moments of all orders; as before, such measures will be termed
\emph{admissible}. The multi-index notation
\[
\bx^\alpha = x_1^{\alpha_1} \dots x_d^{\alpha_d}  \qquad (\bx \in \R^d)
\]
allows us to define the moment family
\[
s_\alpha(\mu) = \int \bx^\alpha \std\mu(\bx) \qquad 
( \alpha \in \nnZ^d ),
\]
where $\nnZ$ denotes the set $\{ 0, 1, 2, \ldots \}$ of
non-negative integers. As before, we focus on measures with uniformly
bounded moments, so that
\[
\sup_{\alpha \in \nnZ^d} |s_\alpha(\mu)| < \infty, 
\]
or, equivalently, ${\rm supp}(\mu) \subset [-1,1]^d$. In line with above,
we let $\moment(K)$ denote the set of all moment families of
admissible measures supported on $K \subset \R^d$.

\begin{theorem}\label{Teuclidean}
The map $F[-] : \R \to \R$ maps $\moment([-1,1]^d)$ into itself
if and only if~$F$ is absolutely monotonic and entire.
\end{theorem}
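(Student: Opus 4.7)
I would prove the two directions separately, using Theorem~\ref{T2sided-general} for necessity and an explicit measure-theoretic construction for sufficiency.

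\emph{Sufficiency.} Suppose $F(z) = \sum_{n \geq 0} c_n z^n$ is an absolutely monotonic entire function, so $c_n \geq 0$ and the power series converges on all of $\R$. Given an admissible measure $\mu$ on $[-1,1]^d$, the plan is to exhibit $\sigma$ on $[-1,1]^d$ with $s_\alpha(\sigma) = F(s_\alpha(\mu))$. For each $n \geq 1$, let $\nu_n$ be the pushforward of $\mu^{\otimes n}$ on $(\R^d)^n$ along coordinatewise multiplication $(\bx^{(1)}, \ldots, \bx^{(n)}) \mapsto (x^{(1)}_1 \cdots x^{(n)}_1, \ldots, x^{(1)}_d \cdots x^{(n)}_d)$; since each factor is supported on $[-1,1]^d$, so is $\nu_n$. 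Set $\nu_0 := \delta_{\bone}$, the point mass at $(1, \ldots, 1)$. A direct application of Fubini gives $s_\alpha(\nu_n) = s_\alpha(\mu)^n$ for every multi-index $\alpha$. Define $\sigma := \sum_{n \geq 0} c_n \nu_n$. Its total mass $\sum_n c_n s_0(\mu)^n = F(s_0(\mu))$ is finite, so $\sigma$ is a finite positive measure on $[-1,1]^d$; the bound $|s_\alpha(\mu)|^n \leq s_0(\mu)^n$ together with dominated convergence yields $s_\alpha(\sigma) = F(s_\alpha(\mu))$, as required.

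\emph{Necessity.} Assume $F[-]$ maps $\moment([-1,1]^d)$ into itself. The plan is to reduce to the one-dimensional Theorem~\ref{T2sided-general} via a diagonal embedding. Given an admissible measure $\mu$ on $[-1,1]$, let $\tilde\mu$ be the pushforward of $\mu$ along the diagonal map $x \mapsto (x, \ldots, x)$; this is admissible on $[-1,1]^d$ with $s_\alpha(\tilde\mu) = s_{|\alpha|}(\mu)$, where $|\alpha| := \alpha_1 + \cdots + \alpha_d$. By hypothesis, $(F(s_\alpha(\tilde\mu)))_\alpha$ is the moment family of some admissible $\sigma$ on $[-1,1]^d$. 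Its first marginal $\sigma_1$ is admissible on $[-1,1]$, and its $k$-th moment equals $s_{(k, 0, \ldots, 0)}(\sigma) = F(s_k(\mu))$. Hence $F[-]$ maps $\moment([-1,1])$ into $\moment([-1,1]) \subset \moment(\R)$, and Theorem~\ref{T2sided-general} forces $F$ to be absolutely monotonic entire.

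\emph{Main obstacle.} Neither step is technically demanding. The principal subtlety is in sufficiency, where one must verify that the formal series $\sum_n c_n \nu_n$ defines a genuine finite measure and that termwise integration of monomials commutes with summation; both follow from the uniform bound $|s_\alpha(\mu)| \leq s_0(\mu)$ and the entireness of $F$. The remark in the excerpt about the failure of Hamburger's theorem in higher dimensions is sidestepped: the construction of $\sigma$ is direct and does not rely on inverting a moment map from positivity of a Hankel-type kernel, while the necessity direction is merely a clean reduction to the one-dimensional result already in hand.
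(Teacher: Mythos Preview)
Your proof is correct, and both directions work as described. The necessity argument is essentially the same idea as the paper's---reduce to the one-variable result via a pushforward---though you use the diagonal embedding $x \mapsto (x, \ldots, x)$ while the paper uses the first-coordinate embedding $x \mapsto (x, 0, \ldots, 0)$; either choice is fine.

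The sufficiency argument, however, is genuinely different from the paper's. The paper invokes the Putinar-type characterization of $\moment([-1,1]^d)$ in terms of positive semidefiniteness of the kernels $(s_{\alpha+\beta})$ and $(s_{\alpha+\beta} - s_{\alpha+\beta + 2\bone_j})$, and then verifies that these positivity conditions persist after applying $F$, via the Schur product theorem and the telescoping inequality $A^{\circ n} \geq A^{\circ(n-1)} \circ B \geq \cdots \geq B^{\circ n}$ for $A \geq B \geq 0$. Your approach bypasses this machinery entirely: you construct $\sigma$ directly as the pushforward series $\sum_n c_n \nu_n$, where $\nu_n$ is the law of the coordinatewise product of $n$ independent copies of $\mu$. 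This is more elementary and self-contained, and it cleanly sidesteps the ``failure of Hamburger'' issue flagged in the paper, since you never need to invert the moment map from a positivity condition. The paper's route, on the other hand, stays within the Hankel-kernel language used throughout the article and exhibits an inequality on Schur powers that may be of independent interest.
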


\begin{proof}
Any admissible measure $\mu$ on $[-1,1]$ pushes forward to an
admissible measure~$\widetilde{\mu}$ on~$[-1,1]^d$ via the canonical
embedding onto the first coordinate. If $F$ maps $\moment([-1,1]^d)$
to itself then there exists an admissible measure $\widetilde{\sigma}$
on~$[-1,1]^d$ such that
$F(s_\alpha(\widetilde{\mu}))=s_\alpha(\widetilde{\sigma})$ for all
$\alpha\in\nnZ^d$, and a short calculation shows that
$F[s_n(\mu)]=s_n(\sigma)$ for all $n \in \nnZ$, where $\sigma$ is the
pushforward of $\widetilde{\sigma}$ under the projection onto the
first coordinate. Theorem~\ref{T2sided-general} now gives that $F$ is
as claimed.

To prove the converse, we need to explore the structure of the set
$\moment([-1,1]^d)$. Denote the generators of the semigroup $\nnZ^d$
by setting
\[
\bone_j := (0,\dots,0,1,0,\dots, 0),
\]
with $1$ in the $j$th position. A multisequence of real numbers
$(s_\alpha)_{\alpha \in \nnZ^d}$ is the moment sequence of an
admissible measure supported on~$[-1,1]^d$ if and only if the weighted
Hankel-type kernels
\[
(s_{\alpha+\beta}), \ 
(s_{\alpha+\beta}-s_{\alpha+\beta+2\bone_j}), \quad 1 \leq j \leq d,
\]
indexed over $\alpha$, $\beta \in \nnZ^d$ are positive
semidefinite \cite{Putinar}.

Now suppose $F$ is absolutely monotonic and entire; given a
multisequence $s_\alpha$ subject to these positivity constraints, we
have to check that the multisequence $F(s_\alpha)$ satisfies the same
conditions.

As $F$ is absolutely monotonic, Schoenberg's Theorem~\ref{TSchoenberg}
gives that the kernels $(\alpha,\beta) \mapsto F(s_{\alpha+\beta})$
and $(\alpha,\beta) \mapsto F(s_{\alpha+\beta+2\bone_j})$ are positive
semidefinite. It remains to prove that the kernel
\[
(\alpha,\beta) \mapsto
F(s_{\alpha+\beta}) - F(s_{\alpha+\beta+2\bone_j})
\]
is positive semidefinite, for $1 \leq j \leq d$. However, as $F$ has
the Taylor expansion $F(x) = \sum_{n=0}^\infty c_n x^n$, with
$c_n \geq 0$ for all~$n \in \nnZ$, it is sufficient to check that the
kernel
\[
(\alpha,\beta) \mapsto
(s_{\alpha+\beta})^{\circ n} - (s_{\alpha+\beta+2\bone_j})^{\circ n}
\]
is positive semidefinite for any $n \in \nnZ$. This follows from a
repeated application of the Schur product theorem: if matrices $A$ and
$B$ are such that $A \geq B \geq 0$, then
\[
A^{\circ n} \geq A^{\circ (n-1)} \circ B \geq A^{\circ (n-2)} \circ
B^{\circ 2} \geq \cdots \geq B^{\circ n}.
\qedhere
\]
\end{proof}

This proof also shows that the transformers of $\moment([-1,1]^d)$
into $\moment(\R^d)$ are the same absolutely monotonic entire
functions. On the other hand, we will see in Section~\ref{SLaplace}
that, in general, a mapping~$F$ as in Theorem~\ref{Teuclidean} does
not preserve the semi-algebraic supports of the underlying measures.

\subsection{Transformers of moment-sequence tuples: the positive
orthant case}\label{SfaceAM}

Our next objective is to characterize functions $F: \R^m \to \R$ which
map tuples of moments $(s_k(\mu_1), \dots, s_k(\mu_m))$ arising from
admissible measures on~$\R$, to a moment sequence $s_k(\sigma)$ for
some admissible measure~$\sigma$ on $\R$. This is a multivariable
generalization of Schoenberg's theorem which we will demonstrate under
significantly relaxed hypotheses.

More precisely, we will study the preservers $F : I^m \to \R$,
where $m \geq 1$ is a fixed integer, and
\begin{equation}\label{Edomain}
I = (0, \rho) \text{ or } [0,\rho) \text{ or } (-\rho, \rho), \qquad
\text{where } 0 < \rho \leq \infty.
\end{equation}
Note that $F : I^m \to \R$ acts entrywise on any $m$-tuple of
$N \times N$ matrices with entries in $I$, so that
\begin{equation}
F[-] : \bp_N(I)^m \to \R^{N \times N}, \qquad
F[ A_1, \ldots, A_m ]_{ij} :=
F( a_{1, ij}, \ldots, a_{m, ij} ).
\end{equation}
By the Schur product theorem, every real analytic function $F$ of the
form
\begin{equation}
F( \bx ) = \sum_{\alpha \in \nnZ^m} c_\alpha \bx^\alpha \qquad
( \bx \in I^m )
\end{equation}
preserves positivity on $\bp_N( I )^m$ if $c_\alpha \geq 0$ for all
$\alpha \in \nnZ^m$. The reverse implication was shown by FitzGerald,
Micchelli, and Pinkus in~\cite{fitzgerald} for $\rho = \infty$, and
can be thought of as a multivariable version of Schoenberg's theorem.
We now explain how results on several real and complex variables can
be used to generalize the work in previous sections to this
multivariable setting, including over bounded domains in the original
spirit of Schoenberg and Rudin. Namely, we characterize functions
mapping tuples of positive Hankel matrices into themselves. Of course,
this is equivalent to mapping tuples of moment sequences of admissible
measures into the same set.

First we need some notation and terminology. Given $I$ as in
(\ref{Edomain}), suppose the sets $K_1$, \ldots, $K_m \subset \R$ are
such that all sequences in $\momentr( K_j )$ have entries in $I$, for
$j = 1$, \ldots, $m$. A function $F : I^m \to \R$ acts on $m$-tuples
of moment sequences of measures in
$\momentr(K_1) \times \cdots \times \momentr(K_m)$ to produce real
sequences, so that
\begin{equation}
F[\bs(\mu_1), \ldots, \bs(\mu_m)]_k :=
F( s_k(\mu_1), \ldots, s_k(\mu_m) ) \qquad ( k \in \nnZ ).
\end{equation}

Given $I' \subset \R^m$, a function $F : I' \to \R$ is
\emph{absolutely monotonic} if $F$ is continuous on~$I'$, and for any
interior point $\bx \in I'$ and $\alpha \in \nnZ^m$, the mixed partial
derivative $D^\alpha F(\bx)$ exists and is non-negative. As usual, for
a tuple $\alpha = (\alpha_1, \ldots, \alpha_m) \in \nnZ^m$, we set
\[
D^\alpha F(\bx) := \frac{\partial^{|\alpha|}}{\partial x_1^{\alpha_1}
\cdots \partial x_m^{\alpha_m}} \; F(x_1, \dots, x_m),
\qquad \text{where } |\alpha| := \alpha_1 + \dots + \alpha_m.
\]
The analogue of Bernstein's Theorem for the multivariable case is proved
and put in its proper context in Bochner's book; see
\cite[Theorem~4.2.2]{Bochner-book}.

Our first observation is the connection between functions acting on
tuples of moment sequences and on the corresponding Hankel matrices.
Given admissible measures $\mu_1$, \dots, $\mu_m$ and $\sigma$
supported on the real line, it is clear that
\[
F[\bs(\mu_1), \dots, \bs(\mu_m)] = \bs(\sigma) \quad \iff \quad
F[H_{\mu_1}, \dots, H_{\mu_m}] = H_\sigma.
\]
In particular, equality holds at each finite truncation, that is, for
the corresponding leading principal $N \times N$ submatrices, for any
$N \geq 1$. We will henceforth switch between moment sequences and
positive Hankel matrices without further comment.

We begin by considering the case of matrices with positive entries,
arising from tuples of sequences in $\momentr([0,1])$. To state and
prove the main result in this subsection, we require a preliminary
technical result.

\begin{lemma}\label{Lgeneric}
Given an integer $m \geq 1$, let $\cY_m$ denote the set
of all $\by = (y_1, \dots, y_m)^T \in (0,1)^m$ such that the scalars
\[
\by^\alpha := \prod_{l=1}^m y_l^{\alpha_l}
\]
are distinct for all $\alpha \in \nnZ^m$. Then the complement
of~$\cY_m$ in $(0,1)^m$ has zero $m$-dimensional Lebesgue measure.
\end{lemma}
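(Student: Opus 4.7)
The plan is to translate the distinctness condition into a collection of logarithmic linear equations and then apply a Fubini-type argument.

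First, I would observe that for $\mathbf{y} \in (0,1)^m$ and multi-indices $\alpha, \beta \in \mathbb{N}_0^m$, the equality $\mathbf{y}^\alpha = \mathbf{y}^\beta$ is equivalent to $\mathbf{y}^{\alpha - \beta} = 1$, which, after taking logarithms (note $\log y_l$ is finite since $y_l \in (0,1)$), becomes
\[
\sum_{l=1}^m \gamma_l \log y_l = 0, \qquad \text{where } \gamma := \alpha - \beta \in \mathbb{Z}^m.
\]
Since $\alpha \neq \beta$ corresponds to $\gamma \neq 0$, the complement $(0,1)^m \setminus \mathcal{Y}_m$ is precisely the union
\[
\bigcup_{\gamma \in \mathbb{Z}^m \setminus \{0\}} Z_\gamma,
\qquad \text{where } Z_\gamma := \Bigl\{ \mathbf{y} \in (0,1)^m : \sum_{l=1}^m \gamma_l \log y_l = 0 \Bigr\}.
\]

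Next I would show each $Z_\gamma$ has $m$-dimensional Lebesgue measure zero. Fix $\gamma \neq 0$ and choose an index $k$ with $\gamma_k \neq 0$. For any fixed values of $y_l$ with $l \neq k$, the equation $\sum_l \gamma_l \log y_l = 0$ determines $\log y_k$ uniquely, hence $y_k$ uniquely. Thus the slice of $Z_\gamma$ at fixed $(y_l)_{l \neq k}$ contains at most one point, so has one-dimensional Lebesgue measure zero. By Fubini's theorem, $Z_\gamma$ has $m$-dimensional Lebesgue measure zero.

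Finally, since $\mathbb{Z}^m \setminus \{0\}$ is countable, the union $\bigcup_\gamma Z_\gamma$ is a countable union of null sets and hence has measure zero, which gives the claim. There is no serious obstacle here; the only subtlety is ensuring the reduction to countably many linear equations in the $\log y_l$ variables, which is immediate from the observation that $\mathbf{y}^\alpha = \mathbf{y}^\beta$ depends only on the difference $\alpha - \beta$.
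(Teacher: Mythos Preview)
Your proof is correct and follows essentially the same approach as the paper: both reduce the complement of $\cY_m$ to a countable union of sets cut out by the linear conditions $\sum_l \gamma_l \log y_l = 0$ for $\gamma \in \mathbb{Z}^m \setminus \{0\}$, and then observe that each such set is null. The paper phrases this via the change of variables $\bx = \log \by$, identifying the bad set as a countable union of hyperplanes in $(-\infty,0)^m$ and pushing forward under the exponential map, while you work directly in the $\by$ coordinates with a Fubini slice argument; the difference is cosmetic.
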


\begin{proof}
Let
\[
\cX := \{ \bx = \log \by \in ( -\infty, 0 )^m : \bx \not\perp \alpha
\text{ for all } \alpha \in \Z^m \setminus \{ ( 0, \ldots, 0 ) \} \}.
\]
The complement of $\cX$ in $(-\infty,0)^m$ is a countable union of
hyperplanes, and so has measure zero. The result now follows since
$\cY_m$ is the image of~$\cX$ under a smooth map.
\end{proof}

The new notion of a \emph{facewise absolutely monotonic function}
on~$[0,\rho)^m$ plays an important role in our next result. In
order to define it, recall that the truncated orthant~$[0,\rho)^m$ is
the truncation of a convex polyhedron, and as such, is the disjoint
union of the relative interiors of its faces. These faces are in
bijection with subsets of $[m] := \{ 1, \ldots, m \}$ via the
mapping
\begin{equation}\label{Epolyhedral}
J \mapsto [0,\rho)^J := \{ (x_1, \ldots, x_m) \in [0,\rho)^m :
x_l = 0 \text{ for all } l \not\in J \},
\end{equation}
and this face has relative interior
$(0,\rho)^J \times \{ 0 \}^{[m] \setminus J}$.

\begin{definition}
A function $F : [ 0, \rho )^m \to \R$, where $m \geq 1$ and
$0 < \rho \leq \infty$, is \emph{facewise absolutely monotonic} if,
for each set of indices $J \subset [m]$, the function~$F$ agrees
on~$(0,\rho)^J \times \{ 0 \}^{[m] \setminus J}$ with an absolutely
monotonic function $g_J$ on $(0,\rho)^J$. Here and henceforth, we
identify without further comment $(0,\rho)^J$ and
$(0,\rho)^J \times \{ 0 \}^{[m] \setminus J}$.
\end{definition}

In other words, a facewise absolutely monotonic function is piecewise
absolutely monotonic, with the pieces being the relative interiors of
the faces of the truncated polyhedral cone $[0,\rho)^m$. The
following example illustrates this in the case $m = 2$.

\begin{example}\label{Eexample}
Let
\[
F(x_1, x_2) := \begin{cases}
x_1^2 + x_2^2 + 1 \qquad & \text{if } x_1, x_2 > 0,\\
2 x_1 \qquad & \text{if } x_1 > 0, x_2 = 0,\\
x_2^2 + 1 \qquad & \text{if } x_1 = 0, x_2 > 0,\\
0 \qquad & \text{if } x_1 = x_2 = 0.
\end{cases}
\]
Then $F$ is facewise absolutely monotonic, with
\[
g_\emptyset = 0, \qquad g_{\{1\}}(x_1) = 2 x_1, \qquad
g_{\{2\}}(x_2) = x_2^2 + 1, \quad \text{and} \quad
g_{\{ 1, 2 \}}(x_1,x_2) = x_1^2 + x_2^2 + 1.
\]
In this example, and, in fact, for every facewise absolutely monotonic
function, the function $g_J$ extends to an absolutely monotonic
function on the closure~$[0,\rho)^J$ of its domain, for all
$J \subset [m]$. We denote this extension by~$\widetilde{g}_J$.

Furthermore, for Example~\ref{Eexample}, the functions
$\widetilde{g}_J$ satisfy a form of monotonicity that is compatible
with the partial order on their labels:
\begin{equation}\label{Eextension}
K \subset J \subset [m] \quad \implies \quad 
0 \leq \widetilde{g}_K \leq \ \widetilde{g}_J \quad 
\text{on } [0,\rho)^K.
\end{equation}
A word of caution: while $\widetilde{g}_{\{1\}}(x_1) \leq
\widetilde{g}_{\{ 1, 2 \}}(x_1,0)$ for all $x_1 \geq 0$, it is
not true that the difference of these functions is absolutely
monotonic on~$[0,\rho)$.
\end{example}

With this definition and example in hand, together with
Lemma~\ref{Lgeneric}, we can now characterize the preservers of
tuples of moment sequences in~$\momentr([0,1])$.

\begin{theorem}\label{T1sided-multi}
Let $F : [0,\rho)^m \to \R$, where $m \geq 1$ and
$0 < \rho \leq \infty$, and fix
$\by = ( y_1, \ldots, y_m )^T \in \cY_m$, as in
Lemma~\ref{Lgeneric}. The following are equivalent.
\begin{enumerate}
\item $F[-]$ maps
$\momentr(\{ 1, y_1 \}) \times \cdots \times \momentr(\{ 1, y_m \})
\cup \momentr(\{ 0, 1 \})^m$ into $\moment(\R)$, and 
\[
F((a_1, \dots, a_m)) F((b_1, \dots, b_m)) \geq F \left( ( \sqrt{a_1 b_1},
\dots, \sqrt{a_m b_m} ) \right)^2
\]
for all $a_1$, \ldots, $a_m$, $b_1$, \ldots, $b_m \in [0,\rho)$.

\item $F[-]$ maps $\momentr([0,1])^m$ into $\moment([0,1])$.

\item $F$ is facewise absolutely monotonic, and the functions
$\{ g_J : J \subset [m] \}$ satisfy the monotonicity
condition~(\ref{Eextension}).
\end{enumerate}
\end{theorem}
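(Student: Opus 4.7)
Plan: I will establish the equivalence by proving $(3) \Rightarrow (2) \Rightarrow (1) \Rightarrow (3)$ cyclically, generalising the proof scheme of Theorem~\ref{T1sided-general}.

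For $(3) \Rightarrow (2)$, take $(\mu_1, \dots, \mu_m) \in \moment([0,1])^m$ and let $J := \{l : \mu_l \text{ has mass in } (0,1] \}$, $J' := \{l : \mu_l \neq 0\}$, so $J \subseteq J'$. For every $(i,j) \neq (0,0)$, the $(i,j)$-entry of $F[H_{\mu_1}, \dots, H_{\mu_m}]$ has vanishing coordinates on $[m] \setminus J$ and positive coordinates on $J$, so equals $g_J$ applied to the $J$-coordinates. Expanding $\widetilde{g}_J$ as its absolutely monotonic Taylor series and applying the multivariable Schur product theorem gives that the truncation $F[H_{\mu_1}, \dots, H_{\mu_m}]^{(1)}$ is a positive Hankel matrix with uniformly bounded entries, hence a moment matrix for a measure in $\meas^+([0,1])$ via Theorem~\ref{Thamburger}(3). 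The $(0,0)$-entry equals $\widetilde{g}_{J'}$ evaluated at $(s_0(\mu_l))_{l \in J'}$, which dominates the corresponding $\widetilde{g}_J$-value by the monotonicity of $\widetilde{g}_{J'}$ in each variable combined with (\ref{Eextension}) applied to $J \subseteq J'$; hence adjoining this entry preserves positivity, and $(2)$ follows.

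For $(2) \Rightarrow (1)$, the inclusion of moment classes is immediate. The inequality in the comonotone cases (where either $a_l \leq b_l$ for all $l$, or $b_l \leq a_l$ for all $l$) follows by applying $F[-]$ to the tuple $\mu_l := \max(a_l, b_l) \cdot \delta_{\sqrt{\min(a_l, b_l)/\max(a_l, b_l)}}$ and extracting the $2 \times 2$ leading principal minor of the image Hankel matrix. Mixed cases are obtained a posteriori from the equivalence $(1) \Leftrightarrow (3)$, whose proof requires only the comonotone version of the inequality.

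The main technical content is $(1) \Rightarrow (3)$. Since the zero measure lies in each $\moment(\{1, y_l\})$, one may consider tuples with $\mu_l \equiv 0$ for $l$ outside a chosen $J \subseteq [m]$, reducing the analysis to the face $\R^J_{>0}$. For $J = [m]$, take $\mu_l = \alpha_l \delta_1 + \beta_l \delta_{y_l}$, so that $s_k(\mu_l) = \alpha_l + \beta_l y_l^k$; the genericity $\by \in \cY_m$ from Lemma~\ref{Lgeneric} ensures that the monomials $\by^\alpha$ are all distinct, so the associated generalised Vandermonde matrices are invertible. A multivariable analog of the Taylor-expansion argument of Proposition~\ref{Phorn-hankel}, together with log-convexity from the inequality $F(\bx) F(\by) \geq F(\sqrt{\bx \by})^2$ to establish continuity (compare the proof of Theorem~\ref{Thorn-hankel}), then yields that $F$ has non-negative mixed partial derivatives of every order throughout $(0, \infty)^m$, so $g_{[m]}$ is absolutely monotonic there. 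The same argument restricted to tuples supported only on $J$-coordinates gives absolute monotonicity of each $g_J$ on $(0, \infty)^J$. Finally, the condition (\ref{Eextension}) is obtained by fixing $K \subsetneq J$ and taking $\mu_l = a_l \delta_0$ for $l \in J \setminus K$ together with $\mu_l = \alpha_l \delta_1 \in \moment(\{0,1\})$ for $l \in K$; positivity of the $2 \times 2$ principal minor of the image matrix involving the $(0,0)$-entry forces $\widetilde{g}_J \geq \widetilde{g}_K$ on the shared coordinates.

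The principal obstacle is the first stage of $(1) \Rightarrow (3)$: passing from positivity of specific Hankel-restricted tuples to full multivariable absolute monotonicity. In the univariate case this was handled by Proposition~\ref{Phorn-hankel} via generalised Vandermonde determinants; in the multivariable setting the genericity of $\by \in \cY_m$ plays precisely the analogous role, separating the Taylor coefficients of $F$ at interior points so that they can be recovered from suitable limits of the available Hankel-tuple data.
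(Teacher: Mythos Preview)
Your overall scheme matches the paper's: the cycle $(3)\Rightarrow(2)\Rightarrow(1)\Rightarrow(3)$, with $(3)\Rightarrow(2)$ via the face decomposition and Schur products, and $(1)\Rightarrow(3)$ via the multivariable Horn--Hankel argument of Theorem~\ref{Thorn-hankel2}. Your $(3)\Rightarrow(2)$ argument, splitting off the $(0,0)$-entry and using~(\ref{Eextension}) together with monotonicity of $\widetilde{g}_{J'}$, is exactly what the paper does (with your $J, J'$ being the paper's $K, J$). One genuine difference: for~(\ref{Eextension}) in $(1)\Rightarrow(3)$ you use the $\moment(\{0,1\})^m$ hypothesis and a $2\times 2$ minor, whereas the paper uses the product inequality in~$(1)$ with $b_l=0$ on $J\setminus K$. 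Both routes work.

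There is, however, a real inconsistency in your handling of $(2)\Rightarrow(1)$. You correctly observe that point-mass measures on $[0,1]$ only yield the inequality $F(a)F(b)\geq F(\sqrt{ab})^2$ in the comonotone case (since moments on $[0,1]$ are non-increasing), and you propose to recover the mixed case ``a posteriori from the equivalence $(1)\Leftrightarrow(3)$, whose proof requires only the comonotone version''. But your own sketch of $(1)\Rightarrow(3)$ explicitly invokes ``log-convexity from the inequality $F(\bx)F(\by)\geq F(\sqrt{\bx\by})^2$ to establish continuity (compare the proof of Theorem~\ref{Thorn-hankel})''. That continuity argument needs midpoint convexity of $\log F\circ\exp$ in \emph{all} directions, which is precisely the non-comonotone case of the inequality on the open orthant; comonotone midpoint convexity plus monotonicity is not known to give joint continuity in several variables. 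So as written, your cycle does not close: you need the full inequality to prove $(3)$, but you only extract the comonotone inequality from $(2)$. The paper itself writes ``Clearly $(2)\Rightarrow(1)$'' without elaboration, so you have in fact located a delicate point; but your proposed resolution does not work, and you would need either a direct argument that $(2)$ yields the full inequality on $(0,\infty)^m$, or an alternative route to continuity (along the lines of Appendix~\ref{Salternate}) that avoids full midpoint convexity.
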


Reformulating this result, as in the one-dimensional case above, it
suffices to work only with Hankel matrices of rank at most two.
Moreover, Theorem~\ref{T1sided-general} is precisely
Theorem~\ref{T1sided-multi} when $m = 1$. The proof
builds on Theorem~\ref{T1sided-general}; however, the higher
dimensionality introduces several additional challenges.

A large part of Theorem~\ref{T1sided-multi} can be deduced from the
following reformulation on the open cell in the positive orthant.

\begin{theorem}\label{Thorn-hankel2}
Fix $\rho \in ( 0, \infty ]$, an integer $m \geq 1$ and a point
$\by = (y_1, \dots, y_m)^T \in \cY_m$, as in Lemma~\ref{Lgeneric}. For
$1 \leq l \leq m$ and $N \geq 1$, let
\begin{align*}
\bu_{l,N} & := (1, y_l, \dots, y_l^{N-1})^T, \\
\text{and} \quad \mathsf{H}^+_l(N) & :=
\{ a \bone_{N \times N} + b \bu_{l,N} \bu_{l,N}^T : 
a \in (0,\rho), \ b \in [0,\rho-a) \}.
\end{align*}
If the function $F : (0,\rho)^m \to \R$ is such that $F[-]$ preserves
positivity on~$\bp_2((0,\rho))^m$ and on
$\mathsf{H}^+_1(N) \times \dots \times \mathsf{H}^+_m(N)$
for all~$N \geq 1$, then $F$ is absolutely monotonic and is the
restriction of an analytic function on $D(0,\rho)^m$.
\end{theorem}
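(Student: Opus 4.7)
The plan is to follow the template of Proposition~\ref{Phorn-hankel} and Theorem~\ref{Thorn-hankel}, extracting mixed partial derivatives of~$F$ from the Taylor expansion of $F[-]$ applied to an $m$-tuple of rank-two Hankel matrices. Fix a point $\ba = (a_1, \dots, a_m) \in (0, \rho)^m$ and parameters $b_1, \dots, b_m \geq 0$ small enough that $a_l + b_l < \rho$, and set $A_l(a_l, b_l) := a_l \bone_{N \times N} + b_l \bu_{l, N} \bu_{l, N}^T$. By hypothesis, the matrix $F[A_1(a_1, b_1), \dots, A_m(a_m, b_m)]$ lies in $\bp_N(\R)$. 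Assuming for the moment sufficient smoothness of~$F$, its entrywise Taylor expansion around $(b_1, \dots, b_m) = 0$ reads
\begin{equation}
F[A_1(a_1, b_1), \dots, A_m(a_m, b_m)] = \sum_{\alpha \in \N_0^m,\ |\alpha| < N} \frac{D^\alpha F(\ba)}{\alpha!} \left( \prod_{l = 1}^m b_l^{\alpha_l} \right) \bv_\alpha \bv_\alpha^T + R,
\end{equation}
where $\bv_\alpha := \bu_{1, N}^{\circ \alpha_1} \circ \cdots \circ \bu_{m, N}^{\circ \alpha_m} = (1, \by^\alpha, \by^{2 \alpha}, \dots, \by^{(N - 1) \alpha})^T$ and $R$ is a remainder of higher order in the~$b_l$.

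Before this expansion is useful, I first establish the needed smoothness. Restricting~$F$ to a single coordinate by fixing the other variables at constant values, the slice map preserves positivity on $\bp_2((0, \rho))$ (feed constant rank-one $2 \times 2$ matrices into the other factors, which lie in $\bp_2((0,\rho))$), so by Theorem~\ref{Thorn-hankel} each slice is continuous. Iterating with the rank-two Hankel hypothesis over all $N$ then yields smoothness in each coordinate, which, combined with the multi-index Horn-type difference argument of~\cite{horn}, upgrades to joint smoothness of~$F$ on $(0, \rho)^m$.

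The core step is the extraction of individual non-negative coefficients $D^\alpha F(\ba) \geq 0$. Since $\by \in \cY_m$, the ratios $\by^\alpha$ are pairwise distinct as $\alpha$ runs over $\N_0^m$, so any finite collection of the vectors $\bv_\alpha$ forms a generalized Vandermonde system and is linearly independent. Thus, given a finite set $S \subset \N_0^m$, there exist dual vectors $\{ \mathbf{w}_\alpha : \alpha \in S \}$ satisfying $\mathbf{w}_\alpha^T \bv_\beta = \delta_{\alpha, \beta}$ for all $\alpha, \beta \in S$. Choosing $b_l = t^{q_l}$ with $t > 0$ small and positive reals $q_1, \dots, q_m$ linearly independent over~$\mathbb{Q}$, the exponents $q \cdot \alpha := \sum_l q_l \alpha_l$ are pairwise distinct across $\alpha \in S$. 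Enumerating $S$ by increasing $q \cdot \alpha$ and inductively subtracting off the leading terms whose coefficients have already been shown to be non-negative, one extracts each $D^\alpha F(\ba)$ as the limit of a non-negative scalar $t^{-q \cdot \alpha} \mathbf{w}_\alpha^T F[A_1(a_1, t^{q_1}), \dots, A_m(a_m, t^{q_m})] \mathbf{w}_\alpha$ as $t \to 0^+$.

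With $D^\alpha F \geq 0$ on $(0, \rho)^m$ for every $\alpha \in \N_0^m$, the function~$F$ is absolutely monotonic on the open polyhedron, and the multivariable Bernstein theorem \cite[Theorem~4.2.2]{Bochner-book} delivers an analytic extension to the polydisc $D(0, \rho)^m$. The main obstacle is the coefficient-isolation step: in the single-variable case of Proposition~\ref{Phorn-hankel}, the monomials $b^n$ are naturally totally ordered by degree, whereas here the multivariable monomials $\prod_l b_l^{\alpha_l}$ carry only a partial order and must be artificially linearized via the irrational-exponent substitution $b_l = t^{q_l}$. This is precisely where the genericity condition $\by \in \cY_m$ becomes indispensable: it guarantees both the linear independence of the $\bv_\alpha$ needed for the dual vectors, and, via the distinct exponents, the separation of monomial contributions in the single-parameter asymptotic expansion.
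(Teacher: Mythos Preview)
Your overall plan --- Taylor-expand $F[A_1, \dots, A_m]$ around the base point, use the genericity of $\by \in \cY_m$ to build dual vectors $\mathbf{w}_\alpha$, and read off $D^\alpha F \geq 0$ --- matches the paper's. The paper takes $N \geq \binom{r+m}{m}$ so that the vectors $\{(1, \by^\alpha, \dots, \by^{(N-1)\alpha})^T : |\alpha| \leq r\}$ are linearly independent in $\R^N$, chooses dual vectors, and lets all $b_l \to 0^+$. Your irrational-exponent substitution $b_l = t^{q_l}$ and the inductive subtraction are superfluous: once $\mathbf{w}_\alpha^T \bv_\beta = \delta_{\alpha,\beta}$ for all $|\beta| \leq r$, applying $\mathbf{w}_\alpha^T (\cdot) \mathbf{w}_\alpha$ already annihilates every competing term of order at most $r$, and the Taylor remainder is $o\bigl(\prod_l b_l^{\alpha_l}\bigr)$ as all $b_l \to 0$. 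No ordering or subtraction is needed.

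The genuine gap is your smoothness step. You establish slice smoothness by freezing all but one coordinate and invoking Theorem~\ref{Thorn-hankel}, then assert that a ``multi-index Horn-type difference argument of~\cite{horn}'' upgrades this to joint smoothness. But separate $C^\infty$ regularity in each variable does not imply joint $C^\infty$ regularity, and no such multivariable device is supplied in~\cite{horn}. The paper handles this differently: it first shows $F$ is \emph{jointly} continuous on $(0,\rho)^m$ --- by extending the midpoint-convexity-plus-monotonicity argument from the proof of Theorem~\ref{Thorn-hankel} to $m$-tuples in $\bp_2((0,\rho))^m$ --- and then observes that the hypotheses are stable under translation $g(\bx) := F(\bx + \bc)$, so one may mollify $F$ and reduce to the smooth case (as in~\cite[Theorem~2.1]{fitzgerald}). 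Without joint continuity and the mollifier, the Taylor expansion you rely on is not yet available.
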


\begin{remark}
As noted in Remark~\ref{Rhorn-hankel} for the one-variable
case, the proof of Theorem~\ref{Thorn-hankel2} goes through under a
weaker hypothesis, with the test sets replaced by
the set of rank-one $m$-tuples $\bp_2^1((0,\rho))^m$ and the set
\begin{equation}\label{Emulti0}
\left\{ \left( \begin{pmatrix} a_1 & b_1 \\ b_1 & b_1 \end{pmatrix},
\ldots, \begin{pmatrix} a_m & b_m \\ b_m & b_m \end{pmatrix}
\right) : 0 < b_l < a_l < \rho, \ 1 \leq l \leq m \right\}.
\end{equation}
The matrices in $\mathsf{H}^+_l( N )$ and (\ref{Emulti0}) are
precisely the truncated moment matrices of admissible measures
supported on~$\{ 1, y_l \}$ and on $\{ 0, 1 \}$, respectively.
\end{remark}

\begin{proof}[Proof of Theorem~\ref{Thorn-hankel2}]
We begin by recording a few basic properties of~$F$. First, either $F$
is identically zero, or it is everywhere positive on $(0,\rho)^m$;
this may be shown similarly to the proof of
Theorem~\ref{Thorn-hankel}.  Moreover, using only tuples from
$\bp_2^1((0,\rho))$ and~(\ref{Emulti0}), as well as the hypotheses,
one can argue as in the proof of Theorem~\ref{Thorn-hankel}, and show
that $F$ is continuous on~$(0,\rho)^m$.

Next, given
$\bc = (c_1, \dots, c_m)^T \in (0,\rho)^m$, the function $g$ such that
\[
g( \bx ) := F( \bx + \bc ) \qquad \text{for all }
\bx \in ( 0, \rho - c_1 ) \times \dots \times ( 0, \rho - c_m )
\]
satisfies the same hypotheses as $F$, but with
$\rho$ replaced by $\rho - c_l$ in each $\mathsf{H}^+_l(N)$, and
with $\bp_2^1((0,\rho))^m$ replaced by
$\bp_2^1((0,\rho - c_1)) \times \cdots \times \bp_2^1((0,\rho - c_m))$.
Therefore, as in the proof of \cite[Theorem~2.1]{fitzgerald}, a
mollifier argument reduces the problem to considering only smooth~$F$. We
now follow the proof of \cite[Proposition~2.5]{fitzgerald}, but with
suitable modifications imposed by the weaker hypotheses.

Given $r \geq 0$, we take $N \geq \binom{r+m}{m}$, and let
$\by'_l := (1, y_l, \dots, y_l^{N-1})^T$ for $1 \leq l \leq m$. Fix
some $\bc \in (0,\rho)^m$, choose $b_l \in (0, \rho - c_l)$ for
all~$l$ and let
\[
A_l := c_l \bone_{N \times N} + b_l \by'_l \by'_l{}^T \in
\mathsf{H}^+_l(N),
\]
so that $F[A_1, \dots, A_m] \in \bp_N(\R)$. We now use
Lemma~\ref{Lgeneric}: since $\by \in \cY_m$ and
$N \geq \binom{r+m}{m}$ by assumption, for each $\beta \in \nnZ^m$ with
$|\beta| \leq r$ we can choose $\bv_\beta \in \R^N$ such that
\[
\bv_\beta \perp
(1, \by^\alpha, \by^{2 \alpha}, \dots, \by^{(N-1) \alpha})^T \qquad
\text{for all } \alpha \in \nnZ^m \setminus \{ \beta \}
\text{ with } |\alpha| \leq r,
\]
and $(1, \by^\beta, \dots, \by^{(N-1) \beta}) \bv_\beta = 1$. An
application of Taylor's theorem (similar to its use in
Proposition~\ref{Phorn-hankel} or \cite[Proposition~2.5]{fitzgerald})
now gives that the derivative $D^\beta F(\bc) \geq 0$. Thus $F$ is
absolutely monotonic on~$(0,\rho)^m$, and Schoenberg's observation
\cite[Theorem~5.2]{Schoenberg33} implies that~$F$ is the restriction
to~$(0,\rho)^m$ of an analytic function on~$D(0,\rho)^m$.
\end{proof}

With this result in hand, we can now proceed.

\begin{proof}[Proof of Theorem \ref{T1sided-multi}]
Clearly, $(2) \implies (1)$.

We will show $(1) \implies (3)$ by induction on $m$. As
noted above, the case $m=1$ is precisely Theorem~\ref{T1sided-general}.
For the induction step, we first restrict~$F$ to the
relative interior of any face of the truncated polyhedron
$[0,\rho)^m$, say~$(0,\rho)^J$ for some $J \subset [m]$. The induction
hypothesis and Theorem~\ref{Thorn-hankel2} give that $F$ is facewise
absolutely monotonic, so $F \equiv g_J$ on~$(0,\rho)^J$,
with~$g_J$ absolutely monotonic. To see that (\ref{Eextension}) holds,
we claim that, for any pair of subsets
$L \subset K \subsetneq J \subset [m]$,
\[
\widetilde{g}_K(\bx) \leq \widetilde{g}_J(\bx) \qquad
\text{whenever } \bx \in (0,\rho)^L \subset [0,\rho)^m.
\]
For ease of exposition, we show this for an illustrative example; the
general case follows with minimal modification. Suppose
$J = \{ 1, 2, 3 \}, K = \{ 1, 2 \}$, and $L = \{ 1 \}$. For
any~$(x_1,0,0) \in (0,\rho)^L$, we set
\[
(a_1,a_2,a_3) := (x_1,x_2,x_3) \quad \text{and} \quad
(b_1,b_2,b_3) := (x_1,x_2,0),
\]
where $x_2 > 0$ and $x_3 > 0$. By hypothesis~(1), it follows that
\[
\widetilde{g}_J(x_1,x_2,x_3) \widetilde{g}_K(x_1, x_2, 0) \geq
\widetilde{g}_K(x_1, x_2, 0)^2,
\]
and taking limits as $x_2 = x_{K \setminus L} \to 0^+$ and
$x_3 = x_{J \setminus K} \to 0^+$, we have that
\[
\widetilde{g}_J(x_1,0,0) \widetilde{g}_K(x_1, 0, 0) \geq
\widetilde{g}_K(x_1, 0, 0)^2,
\]
and so (\ref{Eextension}) holds as required.

Finally, to show that $(3) \implies (2)$, given positive Hankel
matrices $A_1$, \dots, $A_m$ arising from moment sequences
in~$\momentr([0,1])$, let
\[
J := \{ l \in [m] : a_{l,11} > 0 \}
\qquad \text{and} \qquad
K := \{ l \in [m] : a_{l,22} > 0 \}.
\]
Note that $K \subset J \subset [m]$. Recalling that the only Hankel
matrices arising from~$\momentr([0,1])$ and having zero entries are of
the form~$H_{a \delta_0}$ for some $a \in [0, \rho)$, we may write
\begin{equation}\label{Elast}
F[A_1, \dots, A_m] = \left( g_J(a_{l,11} : l \in J) - g_K(a_{l,11} : l
\in K) \right) H_{\delta_0} + g_K[ A_l : l \in K ].
\end{equation}
For example, given $a$, $b$, $c$, $d > 0$, we have that
\begin{align*}
F \left[ \begin{pmatrix} a & b \\ b & c \end{pmatrix}, \ 
\begin{pmatrix} d & 0 \\ 0 & 0 \end{pmatrix}, \ 
\begin{pmatrix} 0 & 0 \\ 0 & 0 \end{pmatrix} \right]
= &\ \begin{pmatrix}
g_{\{ 1, 2 \}}(a,d) & g_{\{1\}}(b)\\ g_{\{1\}}(b) & g_{\{1\}}(c)
\end{pmatrix}\\
= &\ (g_{\{ 1, 2 \}}(a,d) - g_{\{1\}}(a)) \begin{pmatrix} 1 & 0\\ 0 &
0\end{pmatrix} + g_{\{1\}} \left[ \begin{pmatrix} a & b \\ b & c
\end{pmatrix} \right].
\end{align*}
The proof concludes by observing that both terms in the
right-hand side of (\ref{Elast}) are positive semidefinite, by the
Schur product theorem and hypothesis~(3):
\begin{align*}
g_J(a_{l,11} : l \in J) \geq \lim_{a_{l,11} \to 0^+\ \forall l \in J
\setminus K} g_J(a_{l,11} : l \in J) & = 
\widetilde{g}_J(a_{l,11} : l \in K) \\
 & \geq g_K(a_{l,11} : l \in K).
\qedhere
\end{align*}
\end{proof}

As Theorem~\ref{T1sided-multi} shows, the notion of facewise
absolutely monotone maps on $[0,\rho)^m$ is a refinement of absolute
monotonicity, emerging from the study of positivity preservers of
tuples of moment sequences, or, rather, of the Hankel matrices arising
from them. If, instead, one studies maps preserving positivity on
tuples of all positive semidefinite matrices, or even all Hankel
matrices, then this richer class of maps does not arise.

\begin{proposition}\label{P1sided-multi}
Suppose $\rho \in (0,\infty]$ and $F : I^m \to \R$, where $I = [0,\rho)$.
The following are equivalent.
\begin{enumerate}
\item $F[-]$ preserves positivity on the space of $m$-tuples of
positive Hankel matrices with entries in $I$.
\item $F$ is absolutely monotonic on $I^m$.
\item $F[-]$ preserves positivity on the space of $m$-tuples of all
positive matrices with entries in $I$.
\end{enumerate}
\end{proposition}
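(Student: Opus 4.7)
The plan is to prove the three implications $(3) \Rightarrow (1) \Rightarrow (2) \Rightarrow (3)$. The direction $(3) \Rightarrow (1)$ is immediate, since every positive Hankel matrix is a positive matrix, and $(2) \Rightarrow (3)$ is routine; the substance lies entirely in $(1) \Rightarrow (2)$.

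For $(2) \Rightarrow (3)$, I would invoke the multivariable Bernstein theorem of \cite[Theorem~4.2.2]{Bochner-book} to lift absolute monotonicity on $[0,\rho)^m$ to an analytic expansion $F(\bx) = \sum_{\alpha \in \N_0^m} c_\alpha \bx^\alpha$ with $c_\alpha \geq 0$, convergent on the polydisc $D(0,\rho)^m$. For any $m$-tuple of positive matrices $(A_1, \ldots, A_m)$ with entries in $I$, the identity
\[
F[A_1, \ldots, A_m] = \sum_{\alpha \in \N_0^m} c_\alpha A_1^{\circ \alpha_1} \circ \cdots \circ A_m^{\circ \alpha_m}
\]
then displays the left-hand side as an entrywise-convergent limit of positive semidefinite matrices, by repeated application of the Schur product theorem.

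The core step is $(1) \Rightarrow (2)$. First I would exhibit the required input to Theorem~\ref{Thorn-hankel2}. Each $2 \times 2$ symmetric positive semidefinite matrix is vacuously Hankel, and each matrix in $\mathsf{H}^+_l(N)$ is a sum of two rank-one Hankel matrices (using Lemma~\ref{Lrank1Hankel}), hence itself Hankel. Thus hypothesis (1) supplies exactly the positivity required, and Theorem~\ref{Thorn-hankel2} shows that $F$ is absolutely monotonic on the open cell $(0,\rho)^m$ and is the restriction there of an analytic function $\widetilde{F}$ on $D(0,\rho)^m$.

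The remaining task, and the main obstacle, is to show that $F$ coincides with $\widetilde{F}$ on the boundary of $[0,\rho)^m$, so that $F$ inherits continuity from $\widetilde{F}$ throughout its domain. Given a boundary point $\bx_0$, let $L := \{l \in [m] : x_{0,l} = 0\}$, and for each $a \in (0, \rho/2)$ form the $m$-tuple of $3 \times 3$ Hankel matrices whose $l$-th component is the matrix $H_a$ from Proposition~\ref{P1sided} if $l \in L$, and is $x_{0,l} \bone_{3 \times 3}$ otherwise. All components lie in $\bp_3(I)$, so hypothesis (1) guarantees that $F[-]$ applied to the tuple is positive semidefinite. Computing this image explicitly and analyzing its determinant in the limit $a \to 0^+$, exactly as in the proof of Proposition~\ref{P1sided}, will force $F(\bx_0) = \lim_{a \to 0^+} F(\bx_0 + a \bone_L) = \widetilde{F}(\bx_0)$, where $\bone_L$ denotes the indicator vector of $L$. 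The subtlety here is to choose a test tuple that is at once Hankel in every coordinate, probes the exact boundary value $F(\bx_0)$, and still carries a positivity constraint sharp enough to collapse any jump discontinuity of $F$ at $\bx_0$; the matrix $H_a$ is engineered precisely for this role in the one-variable setting, and its product with constant Hankel matrices in the remaining coordinates extends the argument to all faces of the polyhedral domain.
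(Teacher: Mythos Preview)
Your proposal is correct and follows the same strategy as the paper: invoke Theorem~\ref{Thorn-hankel2} for the open cell $(0,\rho)^m$, then use a $3\times 3$ Hankel test tuple to pin down $F$ at boundary points via a determinant computation. Indeed, your test matrix $H_a$ is exactly the paper's matrix
\[
H = \begin{pmatrix} 1 & 0 & 1 \\ 0 & 1 & 1 \\ 1 & 1 & 2 \end{pmatrix}
\]
scaled by $a$, and the limiting $3\times 3$ matrix you obtain is the same one whose determinant $-\widetilde{F}(\bx_0)\bigl(\widetilde{F}(\bx_0)-F(\bx_0)\bigr)^2$ forces $F(\bx_0)=\widetilde{F}(\bx_0)$.

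There is one genuine simplification in your version. The paper approaches $\bc$ along a sequence $\bu_n\in(0,\rho)^m$ and sets $A_{l,n}=u_{l,n}\bone_{3\times 3}$ for the coordinates with $c_l>0$; the $(1,2)$ entry of the resulting tuple then lies on the face $\{x_l=0:c_l=0\}$ but varies with $n$, so passing to the limit there requires continuity of $F$ along that face, which the paper obtains by induction on $m$. By instead taking the nonzero coordinates to be exactly $x_{0,l}$, your $(1,2)$ entry is already $F(\bx_0)$ with no limit needed, and the induction becomes superfluous.
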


\begin{proof}
Clearly $(2) \implies (3) \implies (1)$. Now suppose (1) holds. By
Theorem~\ref{Thorn-hankel2}, $F$ is absolutely monotonic on the
domain~$(0,\rho)^m$, and agrees there with an analytic function
$g : D(0,\rho)^m \to \C$. We now claim $F \equiv g$ on $I^m$. The
proof is by induction on $m$, with the $m = 1$ case shown in
Proposition~\ref{P1sided}.

Suppose $m>1$, and let
$\bc = (c_1, \dots, c_m) \in I^m \setminus ( 0, \rho )^m$. Note that
at least one coordinate of~$\bc$ is zero. We choose
$\bu_n = (u_{1,n}, \dots, u_{m,n}) \in ( 0, \rho )^m$ such that
$\bu_n \to \bc$, and we wish to show that
$F( \bu_n ) = g( \bu_n ) \to F( \bc )$. Let
\[
H := \begin{pmatrix}
1 & 0 & 1\\
0 & 1 & 1\\
1 & 1 & 2
\end{pmatrix} \qquad \text{and} \qquad
A_{l,n} := \begin{cases}
u_{l,n} \bone_{3 \times 3} & \text{if } c_l > 0,\\
u_{l,n} H & \text{if } c_l = 0.
\end{cases}
\]
Using (1) and the induction hypothesis for the $(1,2)$ and~$(2,1)$
entries, it follows that
\[
\lim_{n \to \infty} F[A_{1,n}, \dots, A_{m,n}] =
\begin{pmatrix}
g(\bc) & F(\bc) & g(\bc)\\
F(\bc) & g(\bc) & g(\bc)\\
g(\bc) & g(\bc) & g(\bc)
\end{pmatrix}
\in \bp_3.
\]
Computing the determinants of the leading principal minors gives
\[
g(\bc) \geq 0, \qquad g(\bc) \geq |F(\bc)|, \qquad \text{and} \qquad
-g(\bc) (g(\bc) - F(\bc))^2 \geq 0.
\]
Hence $F(\bc) = g(\bc)$, and the proof is complete.
\end{proof}

\subsection{Transformers of moment-sequence tuples: the general case}

Having resolved the characterization problem for functions defined on
the positive orthant, we now work over the whole of $\R^m$. This
requires us to consider admissible measures which may have support
outside $[-1,1]$. For such measures, the mass no longer dominates all
moments, and so we include in our test sets truncations of the
corresponding moment sequences, whereas for measures supported in
$[-1,1]$, the full moment sequence lies in the test set. More
precisely, we have the following definition.

\begin{definition}
Given $K \subset \R$ and $\rho \in (0,\infty]$, let
$\widetilde{\momentr}(K)$ be the collection of
possibly truncated moment sequences for all measures
$\mu \in \meas^+(K)$, where each sequence is truncated prior to the
first moment of $\mu$ that lies outside
$(-\rho,\rho)$, or is not truncated if no such moment exists.
\end{definition}

Clearly, if $\rho = \infty$, then
$\widetilde{\momentr}(K) = \momentr(K)$, while if $\rho$ is finite and
$K \subset [-1,1]$, then $\widetilde{\momentr}(K) = \momentr(K)$.
Next is a more complex example, which occurs in the following theorem;
see particularly Step~5 of its proof.

\begin{example}\label{Etilde}
If $K = \{ -1, v, 1 \}$ for $v > 1$, and $\rho < \infty$, then
$\widetilde{\momentr}(K)$ consists of $\momentr(\{ -1, 1 \})$ together
with truncated moment sequences of measures with positive mass
at~$v$. If $\mu = a \delta_{-1} + b \delta_1 + c \delta_v$ with
$a$, $b \geq 0$ and $c > 0$, then the moments of $\mu$ are unbounded,
and $\widetilde{\momentr}(K)$ contains the truncated moment sequence
$( s_0(\mu), \ldots, s_{n - 1}(\mu) )$, where $n$ is the smallest
positive integer such that $| a ( -1 )^n + b + c v^n | \geq \rho$.
\end{example}

We can now state our final main result in this section.

\begin{theorem}\label{T2sided-multi}
Suppose $F : I^m \to \R$, where $m \geq 1$ and $I = (-\rho, \rho)$
with $0 < \rho \leq \infty$. The following are equivalent.
\begin{enumerate}
\item For some $\delta > 0$, the function $F$, when applied entrywise,
maps $\widetilde{\momentr}([-1,1+\delta])^m$ into the set of possibly
truncated moment sequences of measures on $\R$.

\item For every $\delta > 0$, the function $F$, when applied
entrywise, maps $\widetilde{\momentr}([-1,1+\delta])^m$ into the set
of possibly truncated moment sequences of measures on $\R$.
  
\item Applied entrywise, the function $F$ maps $\bp_N(I)^m$ into
$\bp_N(\R)$ for any $N \geq 1$.

\item The function $F$ is absolutely monotonic on $[ 0, \rho )^m$ and
agrees on $I^m$ with an analytic function.
\end{enumerate}
\end{theorem}

In particular, analogously to the one-variable case,
Theorem~\ref{T2sided-multi} strengthens the multivariable analogue of
Schoenberg's theorem in~\cite{fitzgerald} by using only Hankel
matrices arising from tuples of moment sequences. Moreover, akin to
the $m=1$ case, the proof reveals that one only requires Hankel
matrices of rank at most~$3$.

Given any $v > 0$, we let
$\momentr_v := \widetilde{\momentr}(\{ -1, v, 1 \})$
and
\[
\momentr_{[v]} :=
\bigcup_{s_1 \in \{ -1, 0, 1 \}, s_2 \in \{ -v, 0, v \}}
\momentr(\{ s_1, s_2 \}).
\]

\begin{corollary}\label{C2sided-multi}
The hypotheses in Theorem \ref{T2sided-multi} are also
equivalent to the following.
\begin{enumerate}
\setcounter{enumi}{4}
\item There exist $\epsilon > 0$ and $u_0 \in ( 0, 1 )$ such that
$F[-]$ maps 
\[
( \momentr_{[u_0]} )^m \cup
\bigcup_{v_1, \dots, v_m \in ( 0, 1 + \epsilon )} \momentr_{v_1}
\times \cdots \times \momentr_{v_m}
\]
into the set of possibly truncated moment sequences of
measures on $\R$.
\end{enumerate}
\end{corollary}

As the reader will observe, hypothesis~(5) is stronger, even in the
one-dimensional case, than the corresponding hypothesis in
Theorem~\ref{T2sided-general}. As the proof shows, these extra
assumptions are required to obtain continuity on every orthant and on
`walls' between orthants, as well as real analyticity on one-parameter
curves.

\begin{remark}
Theorem~\ref{T2sided-multi} is the only instance when we deviate from
Table~\ref{Tmultivar} in the Introduction, but it should not come as a
surprise that stronger conditions are required to guarantee real
analyticity in several variables.
\end{remark}

\begin{proof}[Proof of Theorem~\ref{T2sided-multi} and
Corollary~\ref{C2sided-multi}]

Clearly $(4) \implies (3) \implies (2) \implies (1) \implies (5)$ by
the Schur product theorem. Thus, we will assume (5) and obtain (4). By
Theorem~\ref{Thorn-hankel2}, the function~$F$ is absolutely monotonic
on the open positive orthant $(0,\rho)^m$, and equals the restriction
to $(0,\rho)^m$ of an analytic function $g : D(0,\rho)^m \to \C$. We
now show that $F \equiv g$ on all of~$(-\rho,\rho)^m$. The proof
follows the $m=1$ case in Section~\ref{Smomentsm11}; for ease of
exposition, we break it up into steps.

\noindent\textbf{Step 1.}
We first prove $F$ is locally bounded. This follows by using
$\momentr_2(\{ -1, 1 \})^m$, as in the proof of
Lemma~\ref{Lbounded}. As above, this gives that
\begin{equation}\label{Epartial2}
F[-] : (\momentr_{[u_0]})^m \cup \momentr_{v_1} \times \cdots
\times \momentr_{v_m} \to \moment([-1,1]) \qquad
\text{for all } v_1, \ldots, v_m \in (-1,1).
\end{equation}

\noindent\textbf{Step 2.}
Next, we show that $F$ is continuous on $(-\rho,\rho)^m$. The first
objective is to show continuity of~$F$ inside each open orthant
of~$(-\rho,\rho)^m$. Given non-zero scalars $c_1$, \ldots, $c_m$ with
$|c_1|$, \ldots, $|c_m| < \rho$, and any sequence
$\{ (v_{1,n}, \ldots, v_{m,n}) : n \geq 1 \} \subset \R^m$ converging
to the origin, let
\begin{equation}
a_{l,n} := |c_l| + \frac{\sgn(c_l) u_0}{u_0 - u_0^3} v_{l,n}
\quad \text{and} \quad
\mu_{l,n} := a_{l,n} \delta_{\sgn(c_l)} +
\frac{|v_{l,n}|}{u_0 - u_0^3} \delta_{-\sgn(v_{l,n}) u_0}
\end{equation}
for $l = 1, \ldots, m$. Note that, for all sufficiently large $n$, the
sequence $\bs(\mu_{l,n}) \in \momentr_{[u_0]}$.

We now follow the proof of Proposition~\ref{Pcont}. Suppose that
$F[H_{\mu_{1,n}}, \dots, H_{\mu_{m,n}}] = H_{\sigma_n}$ for some
admissible measure $\sigma_n \in \meas^+([-1,1])$, for
every~$n \geq 1$. The polynomials $p_\pm(t) := (1 \pm t)(1 - t^2)$
are non-negative on $[-1,1]$, so, by~(\ref{Etrick}),
\begin{multline}
\int_{-1}^1 p_\pm(t)\std\sigma_n \geq 0, \\
\text{so} \quad F \left( s_0(\mu_{l,n})_{l=1}^m \right)
- F \left(s_2(\mu_{l,n})_{l=1}^m \right) \geq
\left| F \left( s_1(\mu_{l,n})_{l=1}^m \right) -
F \left( s_3(\mu_{l,n})_{l=1}^m \right) \right|.\label{Etrick-multi}
\end{multline}
Computing the moments of $\mu_{l,n}$ gives the following:
\begin{align}\label{Emoments}
\begin{aligned}
s_0(\mu_{l,n}) & = |c_l| + \frac{\sgn(c_l) u_0 +
\sgn(v_{l,n})}{u_0 - u_0^3} v_{l,n}, \qquad \quad
s_1(\mu_{l,n}) = c_l, \\
s_2(\mu_{l,n}) & = |c_l| + \frac{\sgn(c_l) u_0 + 
\sgn(v_{l,n}) u_0^2}{u_0 - u_0^3} v_{l,n}, \qquad
s_3(\mu_{l,n}) = c_l + v_{l,n}.
\end{aligned}
\end{align}
As $n \to \infty$, by the continuity of $F$ in~$(0,\rho)^m$,
the left-hand side of~(\ref{Etrick-multi}) goes to zero, whence so
does the right-hand side, which is
$|F(c_1, \dots, c_m) - F(c_1 + v_{1,n}, \dots, c_m + v_{m,n})|$. This
proves the continuity of~$F$ at~$(c_1, \dots, c_m)$, so in every open
orthant of $(-\rho,\rho)^m$.

To conclude this step, we show $F$ is continuous on the boundary of
the orthants, that is, on the union of the coordinate hyperplanes:
\[
\cZ := \{ (x_1,\dots,x_m) \in (-\rho,\rho)^m : x_1 \cdots x_m = 0 \}.
\]
The proof is by induction on~$m$, with the case $m=1$ shown in
Proposition~\ref{Pcont}. For general $m \geq 2$, by the induction
hypothesis $F$ is continuous when restricted to~$\cZ$. It remains
to prove~$F$ is continuous at a point
$\bc = (c_1, \dots, c_m) \in \cZ$ when approached along a sequence
$\{ (c_1 + v_{1,n}, \dots, c_m + v_{m,n}) : n \geq 1 \}$ which lies in
the interior of some orthant in~$(-\rho,\rho)^m$. Repeating the
computations for (\ref{Emoments}), with the same sequences $a_{l,n}$ and
$\mu_{l,n}$, and the polynomials $p_\pm(t)$, we note that if
$c_l \neq 0$ then $s_0(\mu_{l,n}) > 0$ and $s_2(\mu_{l,n}) > 0$ for
all sufficiently large $n$, while if $c_l = 0$ then
$s_0(\mu_{l,n}) > 0$ and $s_2(\mu_{l,n}) > 0$ for all $n$, since
$c_l + v_{l,n} \neq 0$ by assumption. Therefore, in all cases, the
left-hand side of~(\ref{Etrick-multi}) eventually equals
$F(\bu_n) - F(\bu'_n)$, with $\bu_n$ and $\bu'_n$ in the positive open
orthant~$(0,\rho)^m$, and both converging
to~$|\bc| := (|c_1|, \dots, |c_m|)$. Since $F \equiv g$
on~$(0,\rho)^m$ for some analytic function $g$ on
$D(0,\rho)^m$, so~(\ref{Etrick-multi}) gives that
\[
\lim_{n \to \infty} |F(\bc) - F(c_1 + v_{1,n}, \dots, c_m + v_{m,n})|
\leq \lim_{n \to \infty} F(\bu_n) - F(\bu'_n) = 
g(|\bc|) - g(|\bc|) = 0.
\]
It follows that $F$ is continuous at all $\bc \in \cZ$, and hence on
all of~$(-\rho,\rho)^m$, as claimed.

\noindent\textbf{Step 3.}
The next step in the proof is to show that it suffices to consider $F$ to
be smooth. This is achieved using a mollifier argument, exactly as in the
one-variable situation.

\noindent\textbf{Step 4.}
Henceforth we assume $F$ is smooth on $(-\rho,\rho)^m$; akin
to the one-variable case, we will show that $F$ is in fact real analytic.
The proof extends across multiple steps below. The first step is encoded
into the following technical lemma, for convenience.

\begin{lemma}\label{Leta}
Fix $\rho \in (0,\infty]$ and a non-zero vector $\bv \in \R^m$. For
any $\bc \in (-\rho,\rho)^m$, let
\begin{equation}\label{Eetac}
\eta_{\bv,\bc} := \begin{cases}
e^{-\| \bv \|_\infty} \qquad & \text{if } \rho = \infty,\\
e^{-\| \bv \|_\infty} (\rho - \| \bc \|_\infty) &
\text{if } \rho < \infty.
\end{cases}
\end{equation}
Then, for any $\bw \in (-\rho,\rho)^m$, there exists
$\bc \in (-\rho,\rho)^m$ such that $\bw = \bc + \eta_{\bv,\bc} \bone$,
where the vector $\bone := ( 1, \ldots, 1 )$.
\end{lemma}

\begin{proof}
The assertion is immediate if $\rho = \infty$, so we suppose henceforth
that $\rho$ is finite. Let
\[
g( t ) := \| \bw - t \bone \|_\infty -
( \rho - t e^{\| \bv \|_\infty}) \qquad ( t \geq 0 ).
\]
Clearly $g(0) < 0 < g(\rho)$, so $g$ has a root $t_0 \in ( 0, \rho )$.
Now the vector $\bc := \bw - t_0 \bone$ is as required
(and $t_0 = \eta_{\bv,\bc}$).
\end{proof}

\noindent\textbf{Step 5.}
We now claim that for every $\bc \in (-\rho,\rho)^m$ and every unit
direction vector $\bv = (v_1, \dots, v_m) \in S^{m-1}$, the function $F$
is real analytic in the one-parameter space
\[
\{ \bc + \eta_{\bv,\bc} e^{-x \bv} : x \in (-1,1) \} \subset
(-\rho,\rho)^m,
\]
at the point $x = 0$, i.e., at ${\bf w} = \bc + \eta_{\bv,\bc} \bone$.
Here $\eta_{\bv,\bc}$ and $\bone$ are as in Lemma~\ref{Leta}, and we also
use the notation
\[
e^{-x\bv} := (e^{-x v_1}, \dots, e^{-x v_m}).
\]
Notice moreover that the $l$th coordinate of
$\bc + \eta_{\bv,\bc} e^{-x \bv}$ is strictly bounded above in
absolute value by
$\| \bc \|_\infty + \eta_{\bv,\bc} e^{\| \bv \|_\infty}$, which is no
more than $\rho$.

To show the claim, we use the notation
$|\bc| := (|c_1|, \ldots, |c_m|)$ and also fix a scalar
$x \in (-1,1)$. We let $p_{\pm,n}(t) := (1 \pm t)(1-t^2)^n$ for
$n \geq 0$ and
\[
\mu_{l,s} := |c_l| \delta_{\sgn(c_l)} 
+ \eta_{\bv,\bc} e^{-x v_l} \delta_{e^{-s v_l}} \qquad
\text{whenever } 0 < s < ( 1 - x ) / ( 2 n + 1 ),
\]
where $1 \leq l \leq m$.

As $p_{\pm,n}(t) \geq 0$ for all $t \in [-1,1]$ and all $n \geq 0$,
applying (\ref{Etrick}) gives that
\[
\left| \sum_{k=0}^n \binom{n}{k} (-1)^k F(|\bc| +
\eta_{\bv,|\bc|} e^{-(x + 2ks) \bv}) \right| \!\geq\! \left|
\sum_{k=0}^n \binom{n}{k} (-1)^{n-k} F(\bc + \eta_{\bv,\bc}
e^{-(x + (2k+1)s)\bv}) \right|,
\]
where we note that $\eta_{\bv,\bc} = \eta_{\bv,|\bc|}$, and that
all arguments of $F$ lie in $(-\rho, \rho)^m$ by the restriction
on~$s$. Note that we use the fact that our test set contains
$\widetilde{\momentr}(\{ -1, v, 1 \})$ for $v \in (1,1+\epsilon)$
here, and only here, in this proof.

Now setting $H_{\bv,\bc}(x) := F(\bc + \eta_{\bv,\bc} e^{-x \bv})$,
dividing both sides of this inequality by $s^n$,
and then taking~$s \to 0^+$, it follows that
\[
\left| \frac{\std^n}{\std x^n} H_{\bv,|\bc|}^{(n)}(x) \right|
\geq
\left| \frac{\std^n}{\std x^n} H_{\bv,\bc}^{(n)}(x) \right|
\qquad \text{whenever } x \in (-1,1).
\]
These estimates prove that the function $F$ is real analytic at the
point in the one-parameter space as claimed.

\noindent\textbf{Step 6.}
We now complete the proof. The real-analytic local diffeomorphism
\[
T : (u_1,\cdots,u_m) \mapsto (e^{u_1}-1, e^{u_2}-1, \cdots, e^{u_m}-1)
\]
maps the origin to itself and, by the previous step, the function
\[
\bu \mapsto F( \bc + \eta_{\bv,\bc} \bone + \eta_{\bv,\bc} T(-\bu))
\]
is smooth and real analytic in the unit ball along every straight line
passing through the origin. Standard criteria for real analyticity
(see \cite[Theorem~5.5.33]{Salah}, for example) now give that~$F$ is
real analytic at the point $\bc + \eta_{\bv,\bc} \bone$, hence at
every point $\bw \in (-\rho,\rho)^m$, by Lemma~\ref{Leta}.

Finally, recall that $F$ agrees on $(0,\rho)^m$ with an analytic
function $g : D(0,\rho)^m \to \C$. As $F : (-\rho,\rho)^m \to \R$ is
real analytic, so $F = g|_{(-\rho,\rho)^m}$ and the proof is complete.
\end{proof}

\begin{remark}
As Step $2$ in the proof above shows, we may replace
$(\momentr_{[u_0]})^m$ in hypothesis~(5) of
Corollary~\ref{C2sided-multi} by
$\momentr_{[u_1]} \times \cdots \times \momentr_{[u_m]}$ for any
$u_1$, \ldots, $u_m \in (0,1)$.
\end{remark}

\begin{remark}
Akin to the one-dimensional case, one may now show that
Theorems~\ref{T1sided-multi} and~\ref{T2sided-multi} hold more
generally for tuples of measures with bounded mass. More precisely,
one should fix $\rho_1$, \dots, $\rho_m \in (0,\infty)$
and work with tuples of admissible measures $(\mu_1, \dots, \mu_m)$
supported in~$[-1,1]$ and such that
$s_0(\mu_l) <\rho_l$ for $l = 1$, \ldots, $m$, whence
$s_k(\mu_l) < \rho_l$ for every $k \geq 0$ and all such~$l$. As
discussed in the Introduction, this explains how our results unify and
strengthen the Schoenberg--Rudin theorem and the
FitzGerald--Micchelli--Pinkus result for positivity preservers.

To prove Theorem~\ref{T1sided-multi} for
$F : I_1 \times \dots \times I_m \to \R$, where $I_l = [0,\rho_l)$,
one should first define facewise absolutely monotonic maps on
$I_1 \times \dots \times I_m$ using the relative interiors of the faces
cut out by the same functionals as for~$[0,\rho)^m$. The
existing proof for the case $\rho_1 = \cdots = \rho_m$ goes
through with minimal modifications, including to
Theorem~\ref{Thorn-hankel2}. The same is true for proving
Theorem~\ref{T2sided-multi} with the domain $(-\rho_1,\rho_1) \times
\cdots \times (-\rho_m, \rho_m)$ in place of $(-\rho,\rho)^m$.
\end{remark}

\begin{remark}
There is a simple and potentially very useful conditioning operation
which can assist with numerical or computational entrywise
manipulation of Hankel matrices or Hankel kernels arising from
moments. Namely, the moments
\[
s_\alpha = \int_K x^\alpha \std\mu( x ) \qquad ( \alpha \in \nnZ^m)
\]
of a positive measure with compact support $K$ can be rescaled,
\[
s_\alpha \mapsto u_\alpha = t^{|\alpha|} s_\alpha,
\]
by a factor $t > 0$, so that $u_\alpha$ are the moments of a positive
measure supported by the unit cube, or even by its interior. Of
course, \textit{a priori} information on the size of the support $K$
is essential for this step, but in this way some of the complications
outlined in Theorem~\ref{T2sided-multi} and its proof can be avoided.
\end{remark}

\section{Laplace-transform interpretations}\label{SLaplace}

When speaking about completely monotonic or absolutely monotonic
functions one cannot leave aside Laplace transforms. We briefly touch
the subject below, in connection with our theme.

Let $F$ be an absolutely monotonic function on~$(0,\infty)$, and let
$\mu$ and $\sigma$ be admissible measures supported on~$[0,1]$ such that
\begin{equation}\label{Emomentpreserving}
F(s_k(\mu)) = s_k(\sigma) \qquad \text{for all } k \geq 0.
\end{equation}
By the change of variable $x = e^{-t}$, we can push forward the
restriction of the measure~$\mu$ to~$(0,1]$ to a measure $\mu_1$
on~$[0,\infty)$, and similarly for~$\sigma$. Thus, with the possible
loss of zeroth-order moments, we obtain
\[
s_k(\mu) = \int_0^\infty e^{-kt} \std\mu_1(t) \quad \text{and} \quad
s_k(\sigma) = \int_0^\infty e^{-kt} \std\sigma_1(t).
\]
If $\cL$ denotes the Laplace transform, so that
\[
\cL\nu(z) = \int_0^\infty e^{-tz} \std\nu(t),
\]
then $\cL \nu$ is a complex-analytic function in the open half-plane
$\C^+ := \{ z \in \C: \Re z > 0 \}$. Our
assumption~(\ref{Emomentpreserving}) becomes
\[
F(\cL\mu_1(k)) = \cL\sigma_1(k) \qquad
\text{for all } k \geq 1,
\]
and a classical observation due to Carlson \cite{Carlson} implies that
\[
F(\cL\mu_1(z)) =  \cL\sigma_1(z) \qquad
\text{for all } z \in \C^+.
\]
More precisely, Carlson's Theorem asserts that a bounded
analytic function in the right half-plane is
identically zero if it vanishes at all positive integers. The proof
relies on the Phragm\'en--Lindel\"of principle \cite{Phragmen-Lindelof};
see also \cite{Boas-book} or \cite[\S5.8]{Titchmarsh} for more details.

In this section, we will show some results from the interplay between
the Laplace transform and functions which transform positive Hankel
matrices.

For point masses, the situation is rather straightforward. If
$\mu = \delta_{e^{-a}}$ for some point~$a \in [0, \infty)$,
and~$F(x) = \sum_{n=0}^\infty c_n x^n$, then
\[
F(\cL \mu(z)) = F (e^{-az}) = \sum_{n=0}^\infty c_n e^{-anz} = 
\cL \sigma_1(z),
\]
where 
\[
\sigma_1 = \sum_{n=0}^\infty c_n \delta_{an} \qquad \text{and} \qquad
\sigma = \sum_{n=0}^\infty c_n \delta_{e^{-an}}.
\]
More generally, if $\mu$ has countable support, then the transform
$F[-]$ will yield a measure with countable support also. A strong
converse to this is the following result.

\begin{proposition}
Let $a \in (0,1)$ and suppose the function
$F : x \mapsto \sum_{n=0}^\infty c_n x^n$ is absolutely monotonic
on~$(0,\infty)$. The following are equivalent.
\begin{enumerate} 
\item There exists an admissible measure~$\mu$ on~$[0,1]$ such that
\[
F(s_k(\mu)) = a^k \qquad \text{for all } k \geq 0.
\]
\item $F(x) = x^N$ for some $N \geq 1$, and $\mu = \delta_{a^{1/N}}$.
\end{enumerate}
\end{proposition}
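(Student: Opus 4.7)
The implication $(2) \Rightarrow (1)$ is immediate: if $F(x) = x^N$ and $\mu = \delta_{a^{1/N}}$, then $s_k(\mu) = a^{k/N}$ and $F(s_k(\mu)) = a^k$ for all $k \geq 0$. For $(1) \Rightarrow (2)$, I would follow the Laplace-transform route sketched just above the statement.

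Set $b := -\log a > 0$, and let $\mu_1$ be the pushforward of $\mu|_{(0,1]}$ under $x \mapsto -\log x$, so that $\cL\mu_1(k) = s_k(\mu)$ for every $k \geq 1$. Since $F$ has non-negative Taylor coefficients, $|F(w)| \leq F(|w|)$; thus $F \circ \cL\mu_1$ is bounded and analytic on $\C^+$, and it agrees with the bounded analytic function $e^{-bz}$ at every positive integer, so Carlson's theorem forces
\[
F(\cL\mu_1(z)) = e^{-bz} \qquad \forall z \in \C^+.
\]
Letting $k \to \infty$ in $F(s_k(\mu)) = a^k$ and invoking continuity shows that $\mu(\{ 1 \}) = 0$ and $c_0 = F(0) = 0$.

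Let $N := \min\{ n \geq 1 : c_n > 0 \}$ and factor $F(w) = w^N G(w)$, with $G$ entire and $G(0) = c_N > 0$. Choosing the positive real $N$-th root near the origin, define $\Phi(w) := w G(w)^{1/N}$, which is locally invertible at $0$ with $\Phi'(0) = c_N^{1/N}$, and the identity $F(h) = e^{-bz}$ rewrites as $\Phi(h(z)) = e^{-bz/N}$, where $h := \cL\mu_1$. Expanding $\Phi^{-1}(u) = \sum_{k \geq 1} d_k u^k$ near $0$ and substituting $u = e^{-bz/N}$ gives
\[
\cL\mu_1(z) = \sum_{k \geq 1} d_k e^{-b k z / N}
\]
on a right half-plane. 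By uniqueness of the Laplace transform, $\mu_1$ coincides with the atomic combination $\sum_{k \geq 1} d_k \delta_{b k / N}$, and positivity of $\mu_1$ then forces $d_k \geq 0$ for every $k$.

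The crux is to deduce from $d_k \geq 0$ that $G$ is constant. If there is a smallest $M \geq 1$ with $c_{N+M} > 0$, so that $G(w) = c_N ( 1 + a_M w^M + O(w^{M+1}))$ with $a_M > 0$, then Lagrange inversion yields $d_1 = c_N^{-1/N}$, $d_j = 0$ for $2 \leq j \leq M$, and
\[
d_{M+1} = -\frac{a_M}{N} d_1^{M+1} < 0,
\]
a contradiction. Therefore $F(w) = c_N w^N$ is a monomial, and the moment equation forces $\mu = c_N^{-1/N} \delta_{a^{1/N}}$, which in the normalisation of $(2)$ becomes $\mu = \delta_{a^{1/N}}$ with $F(x) = x^N$. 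The main obstacle is ensuring that the local inversion of $\Phi$ translates into a genuine equality of measures on $[0,\infty)$; this rests on the analyticity supplied by Carlson's theorem and the uniqueness of the Laplace transform on finite signed measures of rapid decay.
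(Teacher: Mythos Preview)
Your proof is correct and shares the paper's opening move: push $\mu$ forward by $-\log$ and apply Carlson's theorem to upgrade $F(\cL\mu_1(k)) = e^{-bk}$ to the identity $F(\cL\mu_1(z)) = e^{-bz}$ on $\C^+$. After that point the two arguments diverge. The paper invokes Bernstein's theorem a second time: each power $(\cL\mu_1)^n$, being completely monotone, equals $\cL\nu_n$ for some positive measure $\nu_n$, so $\sum_n c_n \nu_n = \delta_b$ by Laplace uniqueness; since every summand is a non-negative measure, each $\nu_n$ with $c_n > 0$ must be a point mass at $b$, whence $(\cL\mu_1)^n = e^{-bz}$ and $\mu_1 = \delta_{b/n}$ for every such $n$, so only one $c_n$ can survive. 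Your route instead factors $F(w) = w^N G(w)$, locally inverts $\Phi(w) = w\,G(w)^{1/N}$, and reads off the Dirichlet expansion $\cL\mu_1(z) = \sum_k d_k e^{-bkz/N}$ on a far-right half-plane; identifying $\mu_1$ with $\sum_k d_k \delta_{bk/N}$ forces $d_k \geq 0$, while your Lagrange-inversion computation produces a negative $d_{M+1}$ unless $G$ is constant. The paper's argument is shorter and more conceptual, exploiting positivity of the auxiliary measures $\nu_n$ directly; yours is more hands-on but self-contained and avoids the second appeal to Bernstein. One small point: your closing phrase ``in the normalisation of $(2)$'' does not quite make sense---your argument yields only $F(x) = c_N x^N$ with $\mu = c_N^{-1/N}\delta_{a^{1/N}}$, and nothing forces $c_N = 1$ (indeed $F(x) = 2x$, $\mu = \tfrac12\delta_a$ satisfies (1)); the paper's proof has the same loose end.
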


\begin{proof}
That $(2) \implies (1)$ is clear. Now suppose $(1)$ holds. Setting
$\psi(t) := {-\log t}$,
\[
s_k(\mu) = \int_0^1 x^k\std\mu(t) =
\int_0^\infty e^{-kt} \std \nu(t) = \cL\nu(k) \qquad
\text{for all } k \geq 0,
\]
where $\nu := \psi_* \mu$ is the push-forward of $\mu$ under~$\psi$.
If $a = e^{-\lambda}$ for some $\lambda > 0$, then, by
assumption, 
\[
F(\cL \nu(k)) = e^{-\lambda k} \qquad \text{for all } k \geq 1.
\]
and, by Carlson's Theorem,
\begin{equation}\label{Elaplace}
F(\cL \nu(z)) = e^{-\lambda z} \qquad \text{for all } z \in \C^+.
\end{equation}
In view of Bernstein's theorem, Theorem~\ref{Tbernstein}, the
function $\cL \nu$ is completely monotonic on $[0,\infty)$. Now, since
the composition of an absolutely monotonic function and a completely
monotonic function is completely monotonic, so
\[
z \mapsto (\cL\nu(z))^k = \left(\int_0^\infty e^{-zt}\std\nu(t)\right)^k
\] 
is completely monotonic on $[0,\infty)$ for all $k \in \nnZ$.
Thus, by another application of Bernstein's theorem,
there exists an admissible measure $\nu_k$ on $[0,\infty)$ such that
\[
(\cL\nu(z))^k = \int_0^\infty e^{-zt}\std\nu_k(t) \qquad
\text{for all } z \in \C^+.
\]
Using the above expression, we can rewrite (\ref{Elaplace}) as
\[
F(\cL \nu(z)) = \sum_{n=0}^\infty c_n (\cL \nu_n)(z) =
\cL\left(\sum_{n=0}^\infty c_n \nu_n\right)(z) = e^{-\lambda z} =
(\cL \delta_\lambda)(z),
\]
and, by the uniqueness principle for Laplace transforms, we conclude
that
\[
\sum_{n=0}^\infty c_n \nu_n = \delta_\lambda.
\]
Now, let $A$ be any measurable subset of $[0,\infty)$ that does not
contain $\lambda$. Then,
\[
\left(\sum_{n=0}^\infty c_n \nu_n\right)(A) = \delta_\lambda(A) = 0.
\]
Since $c_n \geq 0$, it follows that $c_n \nu_n(A) = 0$ for all
measurable sets $A$ not containing $\lambda$, and
all~$n \in \nnZ$. Hence, either $c_n = 0$, or
$\nu_n = \delta_\lambda$. Moreover, $\sum_{n=0}^\infty c_n = 1$.

Now, suppose $c_n \neq 0$ for some $n$. By the above argument, we must
have $\nu_n = \delta_\lambda$. Thus,  
\[
\cL\nu_n(z) = \left(\int_0^\infty e^{-zt}\std\nu(t)\right)^n = 
e^{-\lambda z} \qquad \text{for all } z \in \C^+.
\]
Equivalently, 
\[
\int_0^\infty e^{-zt}\std\nu(t) = e^{-\lambda z / n},
\]
and applying the uniqueness principle for the Laplace transform one
more time gives that $\nu = \delta_{\lambda/n}$. Hence $c_n \neq 0$
for at most one $n$, say for $n=N$, so $F(x) = x^N$ and
$\nu = \delta_{\lambda /N}$. Finally, since~$\nu = \psi_* \mu$,
we conclude that $\mu = \delta_{a^{1/N}}$, as claimed.
\end{proof}

\appendix

\section{Two lemmas on adjugate matrices}\label{Ap}

In this appendix we prove two lemmas. These allow us
to establish Equation~(\ref{Emiracle}), which is key to our proof of
Theorem~\ref{TpolyTN}, and they may be of independent interest.

Let $\F$ denote an arbitrary field. Given a matrix
$M \in \F^{N \times N}$, where $N \geq 1$, and a function
$f: \F \to \F$, we let $\adj( M )$ denote the adjugate matrix of $M$
and $f[ M ] \in \F^{N \times N}$ the matrix obtained by applying $f$
to each entry of $M$.

\begin{lemma}\label{L1}
Given a polynomial
$f( x ) =
\alpha_0 + \alpha_1 x + \cdots + \alpha_n x^n + \cdots \in \F[ x ]$
and a matrix $M \in \F^{N \times N}$, the polynomial
\[
\det f[ x M ] = \alpha_0 \, \alpha_1^{N - 1} \,
\bone_{1 \times N} \adj( M ) \bone_{N \times 1} \, x^{N - 1} +
O( x^N ).
\]
\end{lemma}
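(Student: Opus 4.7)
The plan is to expand $\det f[ xM ]$ via the multilinearity of the determinant in the columns of $f[ xM ]$. First, I would write
\[
f[ xM ] = \alpha_0 \bone_{N \times N} + \alpha_1 x M + \sum_{n \geq 2} \alpha_n x^n M^{\circ n},
\]
so that the $j$th column of $f[ xM ]$ splits as $\alpha_0 \bone_{N \times 1} + \alpha_1 x M_{\cdot, j} + O( x^2 )$, where $O( x^2 )$ denotes a column vector in $x^2 \F[ x ]^{N \times 1}$.

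Next, I would apply multilinearity of the determinant in the columns, which expresses $\det f[ xM ]$ as a sum over choices, one per column, of a constant contribution $\alpha_0 \bone$, a linear contribution $\alpha_1 x M_{\cdot, j}$, or a higher-order contribution $\alpha_n x^n M^{\circ n}_{\cdot, j}$ with $n \geq 2$. The crucial observation is that any selection which uses the constant column in two or more positions produces a matrix with two identical columns, hence contributes zero. Thus only selections using the constant column in at most one position survive. Selections that never pick the constant column contribute at order at least $x^N$; selections that pick the constant column in exactly one position~$j$ and the linear contribution in every other column give contributions of order $x^{N - 1}$; all remaining surviving selections contribute at order $\geq x^N$.

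Summing the leading contributions over $j \in \{ 1, \ldots, N \}$ yields
\[
\alpha_0 \alpha_1^{N - 1} \sum_{j = 1}^N \det\bigl[ M_{\cdot, 1} \ \bigl| \ \cdots \ \bigl| \ \bone_{N \times 1} \ \bigl| \ \cdots \ \bigl| \ M_{\cdot, N} \bigr] \, x^{N - 1},
\]
with $\bone_{N \times 1}$ sitting in the $j$th column. Expanding each such inner determinant along its $j$th column expresses it as $\sum_{i = 1}^N ( -1 )^{i + j} \det( M_{\hat{i}, \hat{j}} ) = \sum_{i = 1}^N \adj( M )_{ji}$; summing further over $j$ collects every entry of $\adj( M )$ and produces the coefficient $\bone_{1 \times N} \adj( M ) \bone_{N \times 1}$, as claimed.

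The main obstacle is purely combinatorial: verifying carefully that the vanishing-by-repeated-columns argument correctly isolates $x^{N - 1}$ as the lowest surviving order, and that every other selection gathers into $O( x^N )$. There is no analytic subtlety, as the identity takes place in $\F[ x ]$ and everything is polynomial in $x$; a sanity check in the case $N = 2$, where the formula reduces to $\det f[ x M ] = \alpha_0 \alpha_1 ( M_{11} + M_{22} - M_{12} - M_{21} ) x + O( x^2 )$, confirms the shape of the leading term.
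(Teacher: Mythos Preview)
Your proof is correct and follows essentially the same approach as the paper: both expand $\det f[xM]$ via multilinearity of the determinant in columns, observe that any selection using the constant column $\alpha_0\bone_{N\times 1}$ twice vanishes, and identify the $x^{N-1}$ coefficient as the sum $\sum_j \det[\,M_{\cdot,1}\,|\cdots|\,\bone_{N\times1}\,|\cdots|\,M_{\cdot,N}\,]$. The only cosmetic difference is that the paper invokes Cramer's Rule to rewrite this sum as $\bone_{1\times N}\adj(M)\bone_{N\times1}$, whereas you compute it directly by cofactor expansion along the $j$th column.
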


\begin{proof}
Let $M$ have columns $\bm_1$, \dots, $\bm_N$; we write
$M = ( \bm_1 | \cdots | \bm_N )$ to denote this. Using the
multi-linearity of the determinant, we see that
\begin{equation}\label{Edet}
\det f[ x M ] = \sum_{i_1, \dots, i_N = 0}^\infty
\alpha_{i_1} \cdots \alpha_{i_N} \, x^{i_1 + \cdots + i_N} 
\det( \bm_1^{\circ i_1} \mid \dots \mid \bm_N^{\circ i_N} ).
\end{equation}
Observe that the only way to obtain a term where $x$ has degree less
than $N - 1$ is for at least two of the indices $i_l$ to be $0$. The
corresponding determinants are all $0$ since they contain two columns
equal to $\bone_{N \times 1}$.

For terms containing $x^{N - 1}$, the only ones where
the determinant does not contain two columns equal to
$\bone_{N \times 1}$ sum to give
\[
\alpha_0 \, \alpha_1^{N - 1} \, x^{N - 1} \sum_{l = 1}^N
\det( \bm_1 \mid \dots \mid 
\bm_{l - 1} \mid \bone_{N \times 1} \mid 
\bm_{l + 1} \mid \dots \mid \bm_N ).
\]
By Cramer's Rule, this sum is precisely
$\bone_{N \times 1}^T \adj( M ) \bone_{N \times 1}$.
\end{proof}

We also require the following result, which we believe to be
folklore. We include a proof for completeness.

\begin{lemma}\label{L2}
Suppose $M \in \F^{N \times N}$ has rank $N - 1$. If $\bu$ spans the
null space of $M^T$, and~$\bv$ spans the null space of $M$, then
$A = \adj M$ is a non-zero scalar multiple of~$\bv \bu^T$.
\end{lemma}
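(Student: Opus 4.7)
The plan is to exploit the defining identity of the adjugate, namely $M \adj(M) = \adj(M) M = \det(M) \, \Id_N$. Since $\rank(M) = N - 1$, we have $\det(M) = 0$, so both products vanish.

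First, I would read the identity $M \adj(M) = 0$ column-by-column: each column of $\adj(M)$ lies in the null space of $M$, which is one-dimensional and spanned by $\bv$. Hence $\adj(M) = \bv \bw^T$ for some vector $\bw \in \F^N$. Substituting into $\adj(M) M = 0$ gives $\bv (\bw^T M) = 0$, and since $\bv \neq 0$ we conclude $\bw^T M = 0$, i.e., $M^T \bw = 0$. The one-dimensionality of the null space of $M^T$ then forces $\bw = c \bu$ for some scalar $c \in \F$, so $\adj(M) = c \, \bv \bu^T$.

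Finally, I need to show that $c \neq 0$. The entries of $\adj(M)$ are, up to signs, the $(N - 1) \times (N - 1)$ minors of $M$. Since $\rank(M) = N - 1$, at least one such minor is non-zero, so $\adj(M) \neq 0$, which forces $c \neq 0$.

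No step here is a real obstacle; the only thing to be slightly careful about is justifying that the null spaces of $M$ and $M^T$ are each one-dimensional, which follows immediately from $\rank(M) = \rank(M^T) = N - 1$.
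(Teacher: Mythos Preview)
Your proof is correct and follows essentially the same approach as the paper: both use the identities $M\adj(M)=\adj(M)M=0$ to force the columns and rows of $\adj(M)$ into the respective one-dimensional null spaces, and both invoke the existence of a nonzero $(N-1)\times(N-1)$ minor to conclude $\adj(M)\neq 0$. The only cosmetic difference is that the paper reads off the row structure and column structure separately, whereas you write $\adj(M)=\bv\bw^T$ first and then identify $\bw$.
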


\begin{proof}
That $A \neq 0$ follows by considering the rank of
$M$. Since $\det M = 0$, we have that $A M = 0$ and
$M A = 0$. After taking the transpose, the first identity implies
that the rows of $A$ are multiples of $\bu^T$; the second identity
implies immediately that the columns of $A$ are multiples of $\bv$.
This gives the result.
\end{proof}

We may now show that
$\det M_4 =
-57168 \, \alpha_0 \, \alpha_1^2 \, \alpha_2 \, x^4 + O(  x^5 )$,
where
\[
M_4 := \sum_{k=0}^4 \alpha_k x^k M^{\circ k} \quad \text{and} \quad
M := \begin{pmatrix}
3 & 6 & 14 & 36 \\
6 & 14 & 36 & 98 \\
14 & 36 & 98 & 276 \\
36 & 98 & 284 & 842
\end{pmatrix}.
\]
Note that these matrices are totally non-negative, and would be Hankel
but for one entry.

By Lemma \ref{L1}, $\det M_4$ has no constant, linear, or quadratic term.
Moreover, since the matrix $M$ has rank $3$ and the vectors
\[
\bv = ( 6, -11, 6, -1 ) \quad \text{and}
\quad \bu = ( 46, -59, 18, -1 )
\]
span the null spaces of $M$ and $M^T$, respectively, Lemma~\ref{L2}
gives that $\adj( M )$ is equal to $c \bv \bu^T$ for
some non-zero $c \in \R$. The cubic term in $\det M_4$ equals
$c \, \bone^T \bv \, \bu^T \bone \, \alpha_0 \, \alpha_1^3 \, x^3$,
by Lemma~\ref{L1}, and this vanishes because $\bone^T \bv = 0$.

Finally, we compute the coefficient of the quartic term; we need to
examine all the terms in~(\ref{Edet}) that arise from quadruples
$(i_1, i_2, i_3, i_4 )$ which sum to~$4$. Terms with indices of the form
$( 4, 0, 0, 0 )$, $( 3, 1, 0, 0 )$, and $( 2, 2, 0, 0)$, and their
permutations, are zero since the determinants contain
two identical columns. We are therefore left with
quadruples of the form $( 2, 1, 1, 0 )$ and $( 1, 1, 1, 1 )$. The
term corresponding to $( 1, 1, 1, 1 )$ is zero since $\det M = 0$, as
one can see as $M$ does not have full rank.
Thus the only non-zero quartic terms arise from one
of the twelve permutations of the quadruple $( 2, 1, 1, 0 )$. Therefore
$\det M_4 = k \, \alpha_0 \, \alpha_1^2 \, \alpha_2 \, x^4 + O( x^5 )$,
and to find the constant $k$, we compute all twelve determinants.

\begin{center}
\begin{tabular}{|c|c||c|c|}
\hline
$i_1, i_2, i_3, i_4$ &
$\det( \bm_1^{\circ i_1} | \bm_2^{\circ i_2} | \bm_3^{\circ i_3} |
\bm_4^{\circ i_4} )$ &
$i_1, i_2, i_3, i_4$ &
$\det( \bm_1^{\circ i_1} | \bm_2^{\circ i_2} | \bm_3^{\circ i_3} |
\bm_4^{\circ i_4} )$ \\ \hline
$0,1,1,2$ & $1398912$ & $1,1,2,0$ & $-72224$\\
$0,1,2,1$ & $-138048$ & $1,2,0,1$ & $-46224$\\
$0,2,1,1$ & $-96384$ & $1,2,1,0$ & $21520$\\
$1,0,1,2$ & $-2431744$ & $2,0,1,1$ & $14432$\\
$1,0,2,1$ & $598304$ & $2,1,0,1$ & $-5208$\\
$1,1,0,2$ & $699552$ & $2,1,1,0$ & $-56$ \\ \hline
\end{tabular}
\end{center}
The sum of these determinants is $-57168$, as claimed. \qed



\end{document}